\def\eqref#1{equation~(\ref{#1})}
\def\1{\bf{1}}
\newcommand{\Norm}[1]{\left\| #1 \right\|}
\newcommand{\norm}[1]{\left\| #1 \right\|_2}
\def\eps{{\varepsilon}}
\def\vzero{{\bf{0}}}
\def\va{{\bf{a}}}
\def\vg{{\bf{g}}}
\def\vs{{\bf{s}}}
\def\vu{{\bf{u}}}
\def\vv{{\bf{v}}}
\def\vw{{\bf{w}}}
\def\vx{{\bf{x}}}
\def\vy{{\bf{y}}}
\def\vz{{\bf{z}}}
\def\fB{{\mathcal{B}}}
\def\fD{{\mathcal{D}}}
\def\fF{{\mathcal{F}}}
\def\fH{{\mathcal{H}}}
\def\fK{{\mathcal{K}}}
\def\fO{{\mathcal{O}}}
\def\fS{{\mathcal{S}}}
\def\fT{{\mathcal{T}}}
\def\BI{{\mathbb{I}}}
\def\BP{{\mathbb{P}}}
\def\BR{{\mathbb{R}}}
\def\BS{{\mathbb{S}}}
\def\mA {{\bf A}}
\def\mB {{\bf B}}
\def\mC {{\bf C}}
\def\mH {{\bf H}}
\def\mI {{\bf I}}
\def\mT {{\bf T}}
\def\mU {{\bf U}}
\def\mX {{\bf X}}
\def\mZ {{\bf Z}}
\DeclareMathOperator*{\argmax}{arg\,max}
\DeclareMathOperator*{\argmin}{arg\,min}
\def\Ddots{\mathinner{\mkern1mu\raise\p@
\vbox{\kern7\p@\hbox{.}}\mkern2mu
\raise4\p@\hbox{.}\mkern2mu\raise7\p@\hbox{.}\mkern1mu}}
\newcommand*{\rom}[1]{\expandafter\@slowromancap\romannumeral #1@}
\theoremstyle{plain}
\newtheorem{thm}{Theorem}
\newtheorem{dfn}{Definition}
\newtheorem{lem}{Lemma}
\newtheorem{asm}{Assumption}
\newtheorem{remark}{Remark}
\newtheorem{cor}{Corollary}
\def\hs {{{\hat \vs}}}
\def\hx {{{\hat \vx}}}
\def\hy {{{\hat \vy}}}
\def\hK {{{\hat K}}}
\def\tg {{{\tilde \vg}}}
\def\tm {{{\tilde m}}}
\def\teps {{{\tilde \eps}}}
\def\AGD {{{\rm AGD}}}
\title{Finding Second-Order Stationary Points in Nonconvex-Strongly-Concave Minimax Optimization}
\author{%
	Luo Luo \\
	School of Data Science \\ 
	Fudan University \\ \texttt{luoluo@fudan.edu.cn}
	\And
	Yujun Li \\
	Noah's Ark Lab \\
	Huawei Technologies Co., Ltd. \\
	\texttt{liyujun9@huawei.com}
	\And
	Cheng Chen\thanks{The corresponding author}\\
 	School of Physical and Mathematical Sciences \\
 	Nanyang Technological University \\
 	\texttt{cheng.chen@ntu.edu.sg}
}
\begin{document}

\maketitle

\begin{abstract}
We study the smooth minimax optimization problem $\min_{\bf x}\max_{\bf y} f({\bf x},{\bf y})$, where $f$ is $\ell$-smooth, strongly-concave in ${\bf y}$ but possibly nonconvex in ${\bf x}$.
Most of existing works focus on finding the first-order stationary points of the function $f({\bf x},{\bf y})$ or its primal function $P({\bf x})\triangleq \max_{\bf y} f({\bf x},{\bf y})$, but few of them focus on achieving second-order stationary points. In this paper, we propose a novel approach for minimax optimization, called Minimax Cubic Newton (MCN), which could find an $\big(\varepsilon,\kappa^{1.5}\sqrt{\rho\varepsilon}\,\big)$-second-order stationary point of $P({\bf x})$ with calling ${\mathcal O}\big(\kappa^{1.5}\sqrt{\rho}\varepsilon^{-1.5}\big)$ times of second-order oracles and $\tilde{\mathcal O}\big(\kappa^{2}\sqrt{\rho}\varepsilon^{-1.5}\big)$ times of first-order oracles, where $\kappa$ is the condition number and $\rho$ is the Lipschitz continuous constant for the Hessian of $f({\bf x},{\bf y})$. In addition, we propose an inexact variant of MCN for high-dimensional problems to avoid calling expensive second-order oracles. Instead, our method solves the cubic sub-problem inexactly via gradient descent and matrix Chebyshev expansion. This strategy still obtains the desired approximate second-order stationary point with high probability but only requires $\tilde{\mathcal O}\big(\kappa^{1.5}\ell\varepsilon^{-2}\big)$ Hessian-vector oracle calls and  $\tilde{\mathcal O}\big(\kappa^{2}\sqrt{\rho}\varepsilon^{-1.5}\big)$ first-order oracle calls.
To the best of our knowledge, this is the first work that considers the non-asymptotic convergence behavior of finding second-order stationary points for minimax problems without the convex-concave assumptions.
\end{abstract}

\section{Introduction} \label{sec:intro}

We consider minimax optimization of the form
\begin{align}\label{prob:main}
    \min_{\vx\in\BR^{d_x}} \max_{\vy\in\BR^{d_y}} f(\vx,\vy),
\end{align}
where $f(\vx,\vy)$ is $\ell$-smooth, $\mu$-strongly-concave in $\vy$, but possibly nonconvex in $\vx$. 
Problem (\ref{prob:main}) can also be written as 
\begin{align}\label{prob:main-primal}
    \min_{\vx\in\BR^{d_x}}\left\{ P(\vx)\triangleq \max_{\vy\in\BR^{d_y}} f(\vx,\vy)\right\}.
\end{align}
This framework covers a wide range of applications in machine learning such as regularized GAN~\cite{sanjabi2018convergence}, reinforcement learning~\cite{qiu2020single}, domain adaptation~\cite{ganin2016domain} and adversarial training~\cite{sinha2017certifying}. 

Most recent works focus on finding an $\eps$-first-order stationary point (FSP) of $P(\vx)$. 
\citet{lin2019gradient} showed that the vanilla gradient descent ascent (GDA) method could obtain an $\eps$-FSP with $\fO((\kappa^2\ell+\kappa\ell^2)\eps^{-2})$ first-order oracle calls. This complexity can be reduced to $\tilde\fO\left(\sqrt{\kappa}\ell\eps^{-2}\right)$ by proximal iteration algorithms~\cite{lin2020near}, which matches the gradient oracle lower bound for finding $\eps$-FSP of $P(\vx)$~\cite{han2021lower,zhang2021complexity}. 
The theory of first-order optimization for problem (\ref{prob:main}) has also been studied in stochastic settings~\cite{lin2019gradient,luo2020stochastic,xu2020gradient,guo2020fast,huang2020accelerated,xian2021faster}. However, the approximate FSP obtained by these algorithms cannot guarantee the local optimality since the primal function $P(\vx)$ could be nonconvex. 

In this paper, we focus on finding a second-order stationary point (SSP) of $P(\vx)$ to capture the local optimal properties~\cite{mazumdar2019finding,fiez2021global}. 
Inspired by the success of second-order optimization in nonconvex minimization~\cite{nesterov2006cubic,zhou2019stochastic,tripuraneni2017stochastic,agarwal2017finding,cartis2011aadaptive,cartis2011badaptive,kohler2017sub,hanzely2020stochastic}, we propose a novel method, called Minimax Cubic
Newton (MCN), which runs cubic Newton update on $\vx$ and maximizes the objective on $\vy$ alternatively. 
This iteration scheme avoids getting stuck at an unexpected FSP. 
Specifically, we show MCN will converge to an $\big(\eps,\kappa^{1.5}\sqrt{\rho\eps}\,\big)$-SSP of $P(\vx)$ with $\fO\big(\kappa^{1.5}\sqrt{\rho}\eps^{-1.5}\big)$ number of iterations, where $\kappa$ is the condition number and $\rho$ is the Lipschitz continuous constant of $\nabla^2f(\vx,\vy)$.
For high-dimensional problems, 
we also propose an efficient algorithm, called  Inexact Minimax Cubic Newton (IMCN), which avoids the expensive second-order oracle calls.
IMCN approximates the second-order information by matrix Chebyshev polynomial and solves the cubic regularized sub-problem inexactly. It only requires $\tilde\fO\big(\kappa^{1.5}\ell\eps^{-2}\big)$ Hessian-vector oracle calls and  $\tilde\fO\big(\kappa^{2}\sqrt{\rho}\eps^{-1.5}\big)$ first-order oracle calls to  find an $\big(\eps,\kappa^{1.5}\sqrt{\rho\eps}\,\big)$-SSP.
Under mild strict saddle condition~\cite{ge2015escaping,ge2016matrix,sun2018geometric,sun2016complete,bhojanapalli2016global}, the approximate SSP of $P(\vx)$ implies an approximate local minimax point of $f(\vx,\vy)$ defined by~\citet{jin2019local}, which successfully characterizes the local optimality for problem (\ref{prob:main}).
To the best of our knowledge, this is the first work that considers non-asymptotic convergence behavior of finding SSP for minimax problems without convex-concave assumptions.
We also conduct experiments on both synthetic function and the real application to validate our theoretical analysis. The empirical results show that the proposed algorithms significantly outperform the GDA method.


In a concurrent work, \citet{chen2021escaping} also studied Problem (\ref{prob:main}) and proposed Cubic-GDA which is similar to our MCN algorithm. MCN has advantage on complexity of first-order oracles by a factor of $\sqrt{\kappa}$ since Cubic-GDA adopts GD to update $\vy$ while MCN uses AGD instead. \citet{chen2021escaping} mentioned that the cubic sub-problem can be efficiently solved by gradient-based algorithms, but they did not provide theoretical analysis for this inexact variant, which is more practical in high dimensional case. As a comparison, we provide the complexity of both Hessian-vector oracles and first-order oracles of our inexact algorithm IMCN.


\section{Preliminaries} \label{sec:pre}

This section first presents the notations and assumptions for our settings. Then we introduce the background of local optimality for minimax optimization and some basic algorithms. 

\subsection{Notations and Assumptions}

For a twice differentiable function $f(\vx, \vy)$, its partial gradients with respect to $\vx$ and $\vy$ are denoted as $\nabla_x f(\vx, \vy)$ and $\nabla_y f(\vx, \vy)$ respectively. Its Hessian matrix at point $(\vx,\vy)$ can be partitioned as 
$\nabla^2 f(\vx, \vy) = 
[\nabla_{xx}^2 f(\vx, \vy), f(\vx, \vy); \nabla_{yx}^2f(\vx, \vy), \nabla_{yy}^2 f(\vx, \vy)]$,
where $\nabla_{xx}^2 f(\vx, \vy)\in\BR^{d_x\times d_x}$, $\nabla_{xy}^2 f(\vx, \vy)\in\BR^{d_x\times d_y}$, $\nabla_{yx}^2 f(\vx, \vy)\in\BR^{d_y\times d_x}$ and $\nabla_{yy}^2 f(\vx, \vy)\in\BR^{d_y\times d_y}$. We also denote 
$\mH(\vx,\vy)=\nabla_{xx}^2 f(\vx, \vy)-\nabla_{xy}^2 f(\vx, \vy)(\nabla_{yy}^2 f(\vx, \vy))^{-1}\nabla_{yx}^2 f(\vx, \vy)$ if $\nabla_{yy}^2 f(\vx, \vy)$ is invertible.

Given a symmetric matrix $\mA$, we denote $\lambda_{\min}(\mA)$ as the smallest eigenvalue of $\mA$.
We use $\norm{\cdot}$ to denote the spectral norm of matrices and Euclidean norm of vectors. We also denote the closed Euclidean ball with radius $r$ and center $\vx^*$ as $\fB(\vx^*,r)=\{\vx:\norm{\vx-\vx^*}\leq r\}$. Additionally, we use notation $\tilde\fO(\cdot)$ to hide logarithmic terms in the complexity.

We suppose the objective function $f(
\vx,\vy)$ of Problem~(\ref{prob:main}) satisfies the following assumptions.

\begin{asm}\label{asm:g-smooth}
The function $f(\vx,\vy)$ has $\ell$-Lipschitz gradients, i.e., there exists a constant $\ell>0$ such that $\norm{\nabla f(\vx,\vy)\!-\!\nabla f(\vx',\vy')}^2 \leq \ell^2\big(\!\norm{\vx-\vx'}^2+\norm{\vy-\vy'}^2\!\big)$
for any $\vx,\vx'\in\BR^{d_x}$ and $\vy,\vy'\in\BR^{d_y}$.
\end{asm}

\begin{asm}\label{asm:h-smooth}
The function $f(\vx,\vy)$ has $\rho$-Lipschitz Hessian, i.e., there exists a constant $\rho>0$ such that $\norm{\nabla^2 f(\vx,\vy)-\nabla^2 f(\vx',\vy')}^2\leq \rho^2\big(\norm{\vx-\vx'}^2+\norm{\vy-\vy'}^2\big)$
for any $\vx,\vx'\in\BR^{d_x}$ and $\vy,\vy'\in\BR^{d_y}$.
\end{asm}

\begin{asm}\label{asm:concave}
The function $f(\vx,\vy)$ is $\mu$-strongly-concave in $\vy$, i.e., there exists a constant $\mu>0$ such that $f(\vx,\vy)\leq f(\vx,\vy')+\nabla_y f(\vx,\vy)^\top(\vy-\vy')-\frac{\mu}{2}\norm{\vy-\vy'}^2$ for any $\vx\in\BR^{d_x}$ and $\vy,\vy'\in\BR^{d_y}$.
\end{asm}

\begin{asm}\label{asm:lower}
The function $P(\vx)\triangleq\max_{\vy\in\BR^{d_y}} f(\vx,\vy)$ satisfies $P^*\triangleq\inf_{\vx\in\BR^{d_x}} P(\vx)>-\infty$.
\end{asm}
\begin{dfn}
Under Assumption \ref{asm:g-smooth} and \ref{asm:concave}, we define the condition number of $f(\vx,\vy)$ as $\kappa\triangleq\ell/\mu$.
\end{dfn}

The assumptions of Lipschitz continuous gradient and strongly-concavity on $f$ indicate that the primal function $P(\vx)\triangleq\max_{\vy\in\BR^{d_y}} f(\vx, \vy)$ is well-defined and has Lipschitz continuous gradients as shown in Lemma~\ref{lem:P-smooth}.

\begin{lem}[{\cite[Lemma 4.3]{lin2019gradient}}]\label{lem:P-smooth}
Suppose the objective function $f$ satisfies Assumptions \ref{asm:g-smooth} and \ref{asm:concave}, then the primal function $P(\vx)\triangleq\max_{\vy\in\BR^{d_y}} f(\vx, \vy)$ has
$(\kappa+1)\ell$-Lipschitz continuous gradients. Additionally, the function $\vy^*(\vx)=\argmax_{\vy\in\BR^{d_y}} f(\vx, \vy)$ is well-defined and $\kappa$-Lipschitz. We also have $\nabla P(\vx) = \nabla_\vx f(\vx, \vy^*(\vx))$.
\end{lem}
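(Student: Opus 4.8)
The plan is to establish the three assertions of Lemma~\ref{lem:P-smooth} in sequence: existence/uniqueness of the inner maximizer $\vy^*(\vx)$, its $\kappa$-Lipschitz continuity, the Danskin-type formula $\nabla P(\vx)=\nabla_x f(\vx,\vy^*(\vx))$, and finally the $(\kappa+1)\ell$-smoothness of $\nabla P$. For the first point, fix $\vx$; by Assumption~\ref{asm:concave} the map $\vy\mapsto f(\vx,\vy)$ is $\mu$-strongly-concave and differentiable, so it admits a unique maximizer $\vy^*(\vx)$, characterized by the stationarity condition $\nabla_y f(\vx,\vy^*(\vx))=\vzero$.

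For the Lipschitz bound on $\vy^*$, I would fix $\vx_1,\vx_2$, abbreviate $\vy_i:=\vy^*(\vx_i)$, and start from the strong-concavity inequality (the co-coercivity-type consequence of Assumption~\ref{asm:concave}) applied in the $\vy$-variable at $\vx_2$:
\[
\langle \nabla_y f(\vx_2,\vy_1)-\nabla_y f(\vx_2,\vy_2),\,\vy_1-\vy_2\rangle \le -\mu\|\vy_1-\vy_2\|^2.
\]
Using the stationarity conditions $\nabla_y f(\vx_2,\vy_2)=\vzero$ and $\nabla_y f(\vx_1,\vy_1)=\vzero$, the left-hand side equals $\langle \nabla_y f(\vx_2,\vy_1)-\nabla_y f(\vx_1,\vy_1),\,\vy_1-\vy_2\rangle$, so Cauchy--Schwarz combined with the $\ell$-Lipschitz gradient bound of Assumption~\ref{asm:g-smooth} gives $\mu\|\vy_1-\vy_2\|^2\le \ell\|\vx_1-\vx_2\|\,\|\vy_1-\vy_2\|$, hence $\|\vy^*(\vx_1)-\vy^*(\vx_2)\|\le\kappa\|\vx_1-\vx_2\|$.

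With $\vy^*$ continuous (indeed Lipschitz) and unique, $P$ is differentiable and Danskin's theorem yields $\nabla P(\vx)=\nabla_x f(\vx,\vy^*(\vx))$; alternatively one can derive differentiability directly from the Lipschitz continuity of $\vy^*$ together with the stationarity condition. Then, since $\nabla_x f$ is a sub-block of $\nabla f$,
\[
\|\nabla P(\vx_1)-\nabla P(\vx_2)\|=\|\nabla_x f(\vx_1,\vy_1)-\nabla_x f(\vx_2,\vy_2)\|\le \ell\sqrt{\|\vx_1-\vx_2\|^2+\|\vy_1-\vy_2\|^2},
\]
and plugging in $\|\vy_1-\vy_2\|\le\kappa\|\vx_1-\vx_2\|$ with $\sqrt{1+\kappa^2}\le 1+\kappa$ produces the stated constant $(\kappa+1)\ell$. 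The only step that is not purely mechanical is the justification of differentiability of $P$ and the gradient formula (verifying the hypotheses of Danskin's theorem or arguing directly from the Lipschitz estimate on $\vy^*$); the rest is the Lipschitz estimate for $\vy^*$, which is the technical core, and a short norm computation.
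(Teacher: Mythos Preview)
The paper does not supply its own proof of Lemma~\ref{lem:P-smooth}; it simply cites \cite[Lemma~4.3]{lin2019gradient} for the result. Your proposal is correct and is exactly the standard argument underlying that cited lemma: uniqueness of $\vy^*(\vx)$ from strong concavity, the $\kappa$-Lipschitz estimate for $\vy^*$ via the stationarity conditions combined with strong concavity and gradient Lipschitzness, Danskin's theorem for the gradient formula, and then the chain of inequalities $\ell\sqrt{1+\kappa^2}\le(\kappa+1)\ell$ for the smoothness constant.
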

Now we give the definitions of $\eps$-FSP and $(\eps,\delta)$-SSP as follows. 

\begin{dfn}\label{dfn:approx-fsp}
Suppose the function $f(\vx,\vy)$ satisfies Assumption~\ref{asm:g-smooth}~and~\ref{asm:concave},
then we call $\vx$ an $\eps$-FSP of $P(\vx)$ if $\norm{\nabla{P(\vx)}}\leq\eps$.
\end{dfn}


\begin{dfn}\label{dfn:approx-ssp}
Suppose the function $f(\vx,\vy)$ satisfies Assumption \ref{asm:g-smooth}, \ref{asm:h-smooth} and \ref{asm:concave}, then we call $\vx$ is an $(\eps,\delta)$-SSP of $P(\vx)$ if $\norm{\nabla P(\vx)}\leq\eps$ and $\nabla^2 P(\vx)\succeq -\delta \mI$.
\end{dfn}

The following two lemmas provide the closed form of $\nabla^2 P(\vx)$ and its Lipschitz continuity.
\begin{lem}[\cite{shapiro1985second}]\label{lem:P-Hessian}
Suppose the function $f(\vx,\vy)$ satisfies Assumption \ref{asm:g-smooth}, \ref{asm:h-smooth} and \ref{asm:concave}.
We use the definition of $\vy^*(\vx)$ in Lemma \ref{lem:P-smooth}, then it holds that $\nabla^2 P(\vx) = \mH(\vx,\vy^*(\vx))$.
\end{lem}
\begin{lem}\label{lem:Ph-smooth}
Under assumptions of Lemma~\ref{lem:P-Hessian}, we have $\norm{\nabla^2 P(\vx) - \nabla^2 P(\vx')} \leq 4\sqrt{2}\kappa^3\rho\norm{\vx-\vx'}$
for any $\vx$ and $\vx'$ in $\BR^{d_x}$.
\end{lem}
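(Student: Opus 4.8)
The plan is to start from the closed-form expression for $\nabla^2 P$ in Lemma~\ref{lem:P-Hessian} and bound the difference of its two terms evaluated at $\vx$ and $\vx'$. Write $\mA(\vx)=\nabla^2_{xx} f(\vx,\vy^*(\vx))$, $\mB(\vx)=\nabla^2_{xy} f(\vx,\vy^*(\vx))$ and $\mC(\vx)=\nabla^2_{yy} f(\vx,\vy^*(\vx))$, so that $\nabla^2 P(\vx)=\mA(\vx)-\mB(\vx)\mC(\vx)^{-1}\mB(\vx)^\top$. Three ingredients feed the estimate. First, Assumption~\ref{asm:g-smooth} gives $\norm{\mA(\vx)}$, $\norm{\mB(\vx)}$, $\norm{\mC(\vx)}\le\ell$, since each block of $\nabla^2 f$ is a submatrix of an operator of norm at most $\ell$. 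Second, Assumption~\ref{asm:concave} gives $\mC(\vx)\preceq-\mu\mI$, hence $\norm{\mC(\vx)^{-1}}\le 1/\mu$. Third, composing Assumption~\ref{asm:h-smooth} with the $\kappa$-Lipschitzness of $\vy^*$ from Lemma~\ref{lem:P-smooth}, and using $\kappa=\ell/\mu\ge1$, each of $\mA,\mB,\mC$ is $\sqrt{2}\kappa\rho$-Lipschitz in $\vx$: for instance $\norm{\mA(\vx)-\mA(\vx')}\le\rho\sqrt{\norm{\vx-\vx'}^2+\norm{\vy^*(\vx)-\vy^*(\vx')}^2}\le\rho\sqrt{1+\kappa^2}\,\norm{\vx-\vx'}\le\sqrt{2}\kappa\rho\norm{\vx-\vx'}$.

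Next I would handle the inverse. Using the identity $\mC^{-1}-\mC'^{-1}=\mC^{-1}(\mC'-\mC)\mC'^{-1}$ together with the bounds $\norm{\mC^{-1}},\norm{\mC'^{-1}}\le 1/\mu$ and the Lipschitz bound on $\mC$ gives $\norm{\mC(\vx)^{-1}-\mC(\vx')^{-1}}\le\sqrt{2}\kappa\rho\mu^{-2}\norm{\vx-\vx'}$. Then split the quadratic term by the standard telescoping
\[
\mB\mC^{-1}\mB^\top-\mB'\mC'^{-1}\mB'^\top=(\mB-\mB')\mC^{-1}\mB^\top+\mB'(\mC^{-1}-\mC'^{-1})\mB^\top+\mB'\mC'^{-1}(\mB-\mB')^\top,
\]
where primes denote evaluation at $\vx'$, and bound each summand by submultiplicativity with the norm and Lipschitz estimates above. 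The three terms contribute $\sqrt{2}\kappa^2\rho$, $\sqrt{2}\kappa^3\rho$ and $\sqrt{2}\kappa^2\rho$ times $\norm{\vx-\vx'}$; using $\kappa\ge1$ to absorb the $\kappa^2$ terms into $\kappa^3$ bounds the quadratic part by $3\sqrt{2}\kappa^3\rho\norm{\vx-\vx'}$. Adding the contribution $\sqrt{2}\kappa\rho\norm{\vx-\vx'}\le\sqrt{2}\kappa^3\rho\norm{\vx-\vx'}$ from $\mA$ yields the stated constant $4\sqrt{2}\kappa^3\rho$ exactly.

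There is no deep obstacle here; the work is bookkeeping, and the main point requiring care is reading Assumption~\ref{asm:h-smooth} correctly: the full Hessian $\nabla^2 f$ is $\rho$-Lipschitz jointly in $(\vx,\vy)$, so each block inherits joint $\rho$-Lipschitzness, and this must be composed with the curve $\vx\mapsto(\vx,\vy^*(\vx))$, whose displacement is controlled by $\sqrt{1+\kappa^2}\le\sqrt{2}\kappa$. The other mild point is justifying $\kappa\ge1$, which holds because $\mu$ and $\ell$ are respectively a strong-concavity and a Lipschitz-gradient constant for $f(\vx,\cdot)$, forcing $\mu\le\ell$. After these observations, tracking the powers of $\kappa$ and the $\sqrt{2}$ factors carefully reproduces the claimed Lipschitz constant.
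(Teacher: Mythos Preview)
Your proof is correct and follows essentially the same approach as the paper: both use the $\sqrt{2}\kappa\rho$-Lipschitzness of the Hessian blocks along the curve $\vx\mapsto(\vx,\vy^*(\vx))$, the identity $\mC^{-1}-\mC'^{-1}=\mC^{-1}(\mC'-\mC)\mC'^{-1}$ for the inverse, and a telescoping of the triple product to reach the constant $4\sqrt{2}\kappa^3\rho$. The only cosmetic difference is that the paper packages the product-Lipschitz step into an auxiliary lemma (applied twice, first to $\mC^{-1}\mB^\top$ and then to $\mB(\mC^{-1}\mB^\top)$), whereas you unroll the three-term telescoping directly; the arithmetic and the resulting constants coincide.
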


\vspace{-0.2cm}
\subsection{Local Optimality of Minimax Optimization}

The Nash equilibrium is widely used in the study of convex-concave minimax optimization~\cite{wang2019solving,zhang2020newton}, but it is intractable in general when the objective function $f(\vx,\vy)$ is nonconvex in $\vx$ or nonconcave in $\vy$. 
For the general minimax problem, we introduce the local minimax point~\cite{jin2019local}, which characterizes the optimality in two-player sequential games where players can only change their strategies locally. 

\begin{dfn}[\cite{jin2019local}]\label{dfn:local-minimax}
Given a differentiable function $f(\vx,\vy):\BR^{d_x}\times\BR^{d_y}\to\BR$ that is strongly-concave in $\vy$, a point $(\vx^*, \vy^*)\in\BR^{d_x}\times\BR^{d_y}$ is called a local minimax point of $f$, if there exists $\delta_0 > 0$ and a function $h$ satisfying $h(\delta) \to 0$ as $\delta \to 0$, such that for any $\delta\in(0, \delta_0]$, $\vx\in \fB(\vx^*,h(\delta))$ and $\vy\in\BR^{d_y}$, we have 
\begin{align}\label{cond:local-minimax}
f(\vx^*, \vy) \leq f(\vx^*, \vy^*) \leq \max_{\vy'\in\BR^{d_y}} f(\vx, \vy').
\end{align}
\end{dfn}
\begin{remark}
The definition of local minimax point for general nonconvex-nonconcave~\cite{jin2019local} only requires (\ref{cond:local-minimax}) holds for any $\vy$ and $\vy'$ in a neighbour of $\vy^*$, while the constraint on $\vy$ and $\vy'$ is unnecessary in our setting since we assume that $f(\vx,\vy)$ is strongly-concave in $\vy$.
\end{remark}
The local minimax point enjoys the following property.
\begin{lem}[{\cite[Proposition 19]{jin2019local}}]\label{lem:local-sufficient}
Suppose $f(\vx,\vy)$ is twice differentiable, then any point $(\vx^*,\vy^*)$ satisfying $\nabla f(\vx^*, \vy^*)=\vzero$, $\nabla_{yy}^2 f(\vx^*, \vy^*)\prec \vzero$ and $\mH(\vx^*,\vy^*)\succ0$
is a local minimax point of~$f(\vx,\vy)$. 
\end{lem}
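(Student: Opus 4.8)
The plan is to reduce the claim to a statement purely about the primal function $P(\vx)=\max_{\vy}f(\vx,\vy)$: that under the three stated conditions $\vx^*$ is a strict local minimum of $P$ while $\vy^*$ is the unique maximizer $\vy^*(\vx^*)$. Given the smoothness of $P$ developed in Lemmas~\ref{lem:P-smooth}--\ref{lem:Ph-smooth}, this becomes a second-order Taylor argument, and it matches the two inequalities in~(\ref{cond:local-minimax}) exactly. (This is essentially the argument of \cite[Proposition 19]{jin2019local}, simplified by the strong concavity available in our setting.)

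First I would dispatch the left inequality $f(\vx^*,\vy)\le f(\vx^*,\vy^*)$. Since $f(\vx^*,\cdot)$ is $\mu$-strongly concave (Assumption~\ref{asm:concave}) and $\nabla_y f(\vx^*,\vy^*)=\vzero$, strong concavity gives $f(\vx^*,\vy)\le f(\vx^*,\vy^*)-\tfrac{\mu}{2}\norm{\vy-\vy^*}^2\le f(\vx^*,\vy^*)$ for every $\vy\in\BR^{d_y}$; in particular $\vy^*=\vy^*(\vx^*)$ is the unique maximizer, so $f(\vx^*,\vy^*)=P(\vx^*)$ and $\max_{\vy'}f(\vx,\vy')=P(\vx)$. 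Hence the right inequality in~(\ref{cond:local-minimax}) is equivalent to $P(\vx^*)\le P(\vx)$ for $\vx$ near $\vx^*$.

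Next I would verify first- and second-order optimality of $P$ at $\vx^*$. By Lemma~\ref{lem:P-smooth}, $\nabla P(\vx^*)=\nabla_\vx f(\vx^*,\vy^*(\vx^*))=\nabla_\vx f(\vx^*,\vy^*)=\vzero$. By Lemma~\ref{lem:P-Hessian}, $\nabla^2 P(\vx^*)$ equals precisely the Schur complement $\nabla^2_{xx}f(\vx^*,\vy^*)-\nabla^2_{xy}f(\vx^*,\vy^*)(\nabla^2_{yy}f(\vx^*,\vy^*))^{-1}\nabla^2_{yx}f(\vx^*,\vy^*)$, which by hypothesis is positive definite; set $\gamma\triangleq\lambda_{\min}(\nabla^2 P(\vx^*))>0$. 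Since $\nabla^2 P$ is Lipschitz (Lemma~\ref{lem:Ph-smooth}), there is $r>0$ with $\nabla^2 P(\vx)\succeq\tfrac{\gamma}{2}\mI$ for all $\vx\in\fB(\vx^*,r)$; a second-order Taylor expansion of $P$ along the segment from $\vx^*$ to $\vx$, using $\nabla P(\vx^*)=\vzero$, then yields $P(\vx)\ge P(\vx^*)+\tfrac{\gamma}{4}\norm{\vx-\vx^*}^2\ge P(\vx^*)$ on that ball.

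Finally I would package this into Definition~\ref{dfn:local-minimax}: take $\delta_0=r$ and $h(\delta)=\delta$ (so $h(\delta)\to 0$ as $\delta\to 0$), and observe that for every $\delta\in(0,\delta_0]$ and every $\vx\in\fB(\vx^*,h(\delta))\subseteq\fB(\vx^*,r)$ both inequalities in~(\ref{cond:local-minimax}) hold, with $\vy$ ranging over all of $\BR^{d_y}$ by the first step. I do not expect a genuine obstacle; the only points needing care are that the right-hand maximization genuinely collapses to $P(\vx)$ (this is where strong concavity, rather than merely $\nabla_{yy}^2 f(\vx^*,\vy^*)\prec\vzero$, enters, consistent with the Remark after Definition~\ref{dfn:local-minimax}) and that the Taylor argument is confined to a ball on which $\nabla^2 P$ stays positive definite --- both already handled by the lemmas in hand.
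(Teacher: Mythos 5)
The paper does not supply its own proof of this lemma; it is quoted verbatim from \citet[Proposition 19]{jin2019local}, so there is no in-paper argument to compare against. Your proof is correct and is exactly the natural specialization of Jin et al.'s argument to the strongly-concave setting of this paper: because $f(\vx,\cdot)$ is globally strongly concave, the constrained local maximization $\max_{\vy':\,\|\vy'-\vy^*\|\le\delta}f(\vx,\vy')$ that appears in the general nonconvex-nonconcave proof collapses to the global primal $P(\vx)$, and the claim reduces to checking that $\vx^*$ is a strict local minimizer of $P$, which your first- and second-order arguments via Lemmas~\ref{lem:P-smooth} and~\ref{lem:P-Hessian} establish cleanly. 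Two minor remarks: (i) the equality $\max_{\vy'}f(\vx,\vy')=P(\vx)$ is definitional and needs no assumption; strong concavity is needed only for finiteness of $P(\vx)$ and for the unique maximizer $\vy^*(\vx)$, so your sentence attributing the collapse itself to strong concavity overstates slightly; (ii) you invoke Lemma~\ref{lem:Ph-smooth} to find a ball where $\nabla^2 P\succeq\tfrac{\gamma}{2}\mI$, but this is heavier than needed --- the statement only assumes $f$ twice differentiable, and a Peano-form Taylor expansion at $\vx^*$ (using $\nabla P(\vx^*)=\vzero$, $\nabla^2P(\vx^*)\succeq\gamma\mI$, and the $o(\|\vx-\vx^*\|^2)$ remainder) already yields $P(\vx)\ge P(\vx^*)$ locally without Lipschitz-Hessian hypotheses. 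Neither point affects the correctness of your argument in the paper's standing setting.
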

Based on Lemma~\ref{lem:local-sufficient},
we introduce the strict-minimax condition on $f$ which is an extension of strict-saddle condition for nonconvex minimization~\cite{ge2015escaping,ge2016matrix,sun2018geometric,sun2016complete,bhojanapalli2016global,jin2017escape}.
\begin{dfn}\label{dfn:strict}
Under Assumption~\ref{asm:g-smooth},~\ref{asm:h-smooth}~and~\ref{asm:concave}, we say $f(\vx,\vy)$ is $(\alpha, \beta, \gamma)$-strict-minimax for some $\alpha>0$, $\beta>0$ and $\gamma>0$ if every $(\hx,\hy)\in \BR^{d_x}\times\BR^{d_y}$ satisfies at least one of the following three conditions:
(a) $\norm{\nabla {f(\hx,\hy)}}>\alpha$; 
(b) $\lambda_{\min}(\mH(\hx,\hy))< -\beta$;  
(c) There exists a local minimax point $(\vx^*, \vy^*)$ such that $\norm{\hx-\vx^*}^2+\norm{\hy-\vy^*}^2\leq\gamma^2$.
\end{dfn}
Note that if $f$ is $(\alpha, \beta, \gamma)$-strict-minimax and there exists a point $\hx$ which is an $(\eps,\delta)$-SSP of $P(\vx)$ with sufficient small $\eps$ and $\delta$, then we can find $\hy \approx \argmax_\vy f(\hx,\vy)$ via running a first-order algorithm to minimize $-f(\hx, \cdot)$ and obtain $(\hx, \hy)$ which is in a neighborhood of $(\vx^*,\vy^*)$. In other words, under the strict-minimax condition, we can reduce the task of finding approximate local minimax point of $f(\vx,\vy)$ to finding an approximate SSP of $P(\vx)$. We will provide the formal statement in Section \ref{subsection:convergence}.

\subsection{Accelerated Gradient Descent}

\begin{wrapfigure}[9]{R}{0.5\textwidth}
\vspace{-0.9cm}
    \begin{minipage}{0.5\textwidth}
     \begin{algorithm}[H]
\caption{$\AGD (h, \vy_0, K, \eta, \theta)$} \label{alg:agd} 
\begin{algorithmic}[1]
\STATE $\tilde \vy_0=\vy_0$ \\[0.05cm]
\STATE\textbf{for} $k=0,\dots,K-1$ \textbf{do} \\[0.05cm]
\STATE\quad $\vy_{k+1}={\tilde \vy}_k-\eta\nabla h(\tilde \vy_k)$ \\[0.1cm]
\STATE\quad ${\tilde \vy}_{k+1} = \vy_{t+1}+\theta(\vy_{k+1}-\vy_k)$ \\[0.1cm]
\STATE\textbf{end for} \\[0.05cm]
\STATE\textbf{Output:} $\vy_K$
\end{algorithmic}
\end{algorithm}
    \end{minipage}
  \end{wrapfigure}

Nesterov’s accelerated gradient descent (AGD) is the optimal first-order algorithm for convex optimization~\cite{nesterov1983method,nesterov2018lectures}, which is widely used in minimax optimization algorithms~\cite{lin2020near,wang2020improved}. We describe the details of AGD for smooth and strongly-convex functions in Algorithm~\ref{alg:agd}, which has the following convergence rate.

\begin{lem}[{\cite[Lemma 2]{wang2020improved}}]\label{lem:agd}
Running Algorithm~\ref{alg:agd} on a $\ell_h$-smooth and $\mu_h$-strongly-convex objective function $h(\cdot)$ with parameters $\eta=1/\ell_h$ and $\theta=\frac{\sqrt{\kappa_h}-1}{\sqrt{\kappa_h}+1}$ produces the output $\vy_K$ satisfying
$\norm{\vy_K-\vy^*}^2 \leq (\kappa_h +1)\big(1-\frac{1}{\sqrt{\kappa_h}}\big)^K\norm{\vy_0-\vy^*}^2$,
where $\vy^*=\argmin_y h(\vy)$ and $\kappa_h=\ell_h/\mu_h$.
\end{lem}

\vspace{-0.5cm}
\subsection{Cubic Regularized Newton}
Cubic regularized Newton (CRN) is a classic algorithm for nonconvex minimization~\cite{nesterov2006cubic,nesterov2018lectures,cartis2011aadaptive,cartis2011badaptive,tripuraneni2017stochastic}. 
It solves the nonconvex minimization problem 
$\min_\vx g(\vx)$ via the following update rules
  \[\vs_{t}=\argmin_{\vs\in\BR^{d}}\nabla g(\vx_t)^\top \vs+ \frac{1}{2}\vs^\top \nabla^2 g(\vx_t) \vs+\frac{\rho_g}{6}\norm{\vs}^3, ~~~~~~ \vx_{t+1}=\vx_t + \vs_{t}.  \]
The CRN method could find an $\big(\eps,\sqrt{\rho_g\eps}\,\big)$-SSP of $g(\vx)$ with $\fO\big(\sqrt{\rho_g}\eps^{-1.5}\big)$ number of iterations, where $\rho_g$ is the Lipschitz continuous constant of the Hessian of $g(\vx)$.

\section{Minimax Cubic Newton Algorithm}\label{section:MCN}

In this section, we propose our minimax cubic Newton algorithm and give its convergence results.

\subsection{Minimax Cubic Newton Method}

\begin{wrapfigure}[13]{R}{0.5\textwidth}
\vspace{-0.35cm}
\begin{minipage}{0.5\textwidth}
\begin{algorithm}[H]
\caption{Minimax Cubic-Newton (MCN)}\label{alg:MCN}
\begin{algorithmic}[1]
    \STATE \textbf{Input:} $\vx_0\in\BR^{d_x}$, $\vy_{-1}=\vzero$, $T$, $\{K_t\}_{t=0}^T$, $\eps$.  \\[0.05cm]
    \STATE \textbf{for} $t=0, \cdots T-1$ \textbf{do} \\[0.05cm]
    \STATE~~\label{line:cubic-AGD} $\vy_{t} = \AGD\left(-f(\vx_t,\cdot), \vy_{t-1}, K_t, \frac{1}{\ell}, \frac{\sqrt{\kappa}-1}{\sqrt{\kappa}+1}\right)$ \\[0.05cm]
    \STATE~~$\vg_t=\nabla_x f(\vx_t, \vy_t)$, $\mH_t=\mH(\vx_t, \vy_t)$ \\[0.05cm]
    \STATE\label{line:cubic-solver}~~$\vs_t^* = \argmin\limits_{\vs\in\BR^{d_x}}\left(\vg_t^\top \vs + \frac{1}{2}\vs^\top \mH_t \vs + \frac{M}{6}\norm{\vs}^3\right)$ \\[0.05cm]
    \STATE\label{line:break}~~\textbf{if} $\norm{\vs_t^*} \leq \frac{1}{2}\sqrt{\eps/M}$ \textbf{then} break \\[0.05cm]
    \STATE~~$\vx_{t+1} = \vx_t + \vs_t^*$ \\[0.05cm]
    \STATE\textbf{end for} \\[0.05cm]
    \STATE \textbf{Output:} $\hx = \vx_t+\vs_t^*$
\end{algorithmic}
\end{algorithm}
\end{minipage}
\end{wrapfigure}

We present the details of Minimax Cubic Newton (MCN) method in Algorithm~\ref{alg:MCN}. In each round, the MCN algorithm performs following steps:

\begin{itemize}[leftmargin=*]
\item Run AGD as presented in Algorithm~\ref{alg:agd} to estimate $\vy_t \approx \vy^*(\vx_t)=\argmax_\vy f(\vx_t,\vy)$.
\item Compute the inexact first-order and second-order information of $P$ at $\vx_t$ as 
\begin{align*}
\nabla P(\vx_t) \approx  \vg_t = \nabla_x f(\vx_t, \vy_t) 
\quad\text{and}\quad
\nabla^2 P(\vx_t) \approx \mH_t = \mH(\vx_t, \vy_t).    
\end{align*}
\item Solve the following cubic regularized problem
\begin{align}\label{prob:sub-cubic} \hspace{-0.3cm}
\vs_t^* {=} \argmin_{\vs\in\BR^{d_x}}\left(\vg_t^\top + \frac{1}{2}\vs^\top \mH_t  + \frac{M}{6}\norm{\vs}^3\right).
\end{align}
\end{itemize}

The expressions of $\vg_t$ and $\mH_t$ in the algorithm are inspired from Lemma~\ref{lem:P-smooth} and \ref{lem:Ph-smooth}. The smoothness of $\nabla P(\vx)$ and $\nabla^2 P(\vx)$ allow the total complexity of AGD iteration of the algorithm has the desired upper bound.
Our theoretical analysis show the termination condition in Line 7 of Algorithm \ref{alg:MCN} can be attained in no more than $\fO(\kappa^{1.5}\sqrt{\rho}\eps^{-1.5})$ number of iterations. We also show that a small $\norm{\vs_t^*}$ will lead to a desired approximate second-order stationary point of $P(\vx)$. Hence, using AGD to optimize $f(\vx_t+\vs_t^*,\vy)$ with respect to $\vy$ generates an approximate local minimax point of $f(\vx,\vy)$.

\subsection{Complexity Analysis for MCN}\label{subsection:convergence}

In this section, we let $M=4\sqrt{2}\kappa^3\rho$ for MCN. 
Our analysis for MCN algorithm contains three parts:
\begin{enumerate}[leftmargin=*]
\item We follow~\citet{tripuraneni2017stochastic}'s idea to show that our algorithm with sufficient accurate gradient and Hessian estimator of $P(\vx)$ requires no more than $T=\fO\left(\kappa^{1.5}\sqrt{\rho}\eps^{-1.5}\right)$ rounds of iterations to achieve an $\left(\eps, \kappa^{1.5}\sqrt{\rho\eps}\,\right)$-SSP of $P(\vx)$.
\item We prove the AGD step in line 3 requires at most $\tilde\fO\left(\kappa^2\sqrt{\rho}\eps^{-1.5}\right)$ gradient calls in total.
\item The last part shows we can achieve an approximate local minimax point of $f(\vx,\vy)$ from an $\left(\eps, \kappa^{1.5}\sqrt{\rho\eps}\,\right)$-second-order stationary point of $P(\vx)$ under strict-saddle condition.
\end{enumerate}

\paragraph{Cubic Newton Iteration on $P(\vx)$}

The procedure of our Algorithm~\ref{alg:MCN} can be regarded as applying cubic Newton method to minimize nonconvex function $P(\vx)$, but using inexact first-order and second-order information.
We consider the following conditions on the inexact gradient and Hessian, which will hold if we run AGD with enough number of iterations.
\begin{asm}\label{asm:error-g-h}
Suppose the estimators $\vg_t\in\BR^{d_x}$ and $\mH_t\in\BR^{d_x\times d_x}$ satisfy conditions $\norm{\nabla P(\vx_t)-\vg_t} \leq C_g\eps$ and
$\norm{\nabla^2 P(\vx_t)-\mH_t} \leq C_H\sqrt{M\eps}$
for some $C_g> 0$ and $C_H> 0$.
\end{asm}

The following lemma implies the analysis of MCN algorithm only needs to focus on the case of each $\norm{\vs_t^*}$ is large, otherwise we have already find $\vx_{t+1}$ which a desired approximate SSP of $P(\vx)$.
\begin{lem}\label{lem:norm-s}
Under Assumption~\ref{asm:error-g-h} with $C_g=1/192$ and $C_H=1/48$, if $\vx_{t+1}$ from Algorithm~\ref{alg:MCN} is not an $\big(\eps, \sqrt{M\eps}\,\big)$-SSP of $P(\vx)$, we have
$\norm{\vs_t^*} \geq \frac{1}{2}\sqrt{\eps/M}$.
\end{lem}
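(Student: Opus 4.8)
The plan is to argue by contrapositive: assume $\norm{\vs_t^*} < \tfrac12\sqrt{\eps/M}$ and show that $\vx_{t+1} = \vx_t + \vs_t^*$ is then an $(\eps,\sqrt{M\eps})$-second-order stationary point of $P$, i.e.\ $\norm{\nabla P(\vx_{t+1})} \le \eps$ and $\nabla^2 P(\vx_{t+1}) \succeq -\sqrt{M\eps}\,\mI$. The two ingredients are the optimality conditions for the cubic subproblem (Lemma~\ref{lem:cubic-exact-solution} with $M' = M$), which control $\vg_t + \mH_t\vs_t^*$ and $\mH_t$, together with the Hessian-smoothness of $P$ (Lemma~\ref{lem:h-smooth}, which applies since $M = 4\sqrt2\,\kappa^3\rho$ is exactly the Hessian-Lipschitz constant from Lemma~\ref{lem:Ph-smooth}), which lets me transfer estimates from $\vx_t$ to $\vx_{t+1}$; finally Assumption~\ref{asm:error-g-h} with the stated constants $C_g = 1/192$, $C_H = 1/48$ closes the gap between the inexact quantities $\vg_t,\mH_t$ and the true $\nabla P(\vx_t),\nabla^2 P(\vx_t)$.

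For the gradient bound, I would write
\[
\nabla P(\vx_{t+1}) = \underbrace{\big(\nabla P(\vx_{t+1}) - \nabla P(\vx_t) - \nabla^2 P(\vx_t)\vs_t^*\big)}_{\text{(I)}}
+ \underbrace{\big(\nabla^2 P(\vx_t) - \mH_t\big)\vs_t^*}_{\text{(II)}}
+ \underbrace{\big(\nabla P(\vx_t) - \vg_t\big)}_{\text{(III)}}
+ \underbrace{\big(\vg_t + \mH_t\vs_t^*\big)}_{\text{(IV)}}.
\]
Term (I) is bounded by $\tfrac M2\norm{\vs_t^*}^2$ by \eqref{ieq:h-smooth1}; term (IV) equals $-\tfrac M2\norm{\vs_t^*}\vs_t^*$ by \eqref{ieq:cubic-sol-1}, so it is $\tfrac M2\norm{\vs_t^*}^2$; term (II) is at most $C_H\sqrt{M\eps}\,\norm{\vs_t^*}$ and term (III) is at most $C_g\eps$. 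Substituting $\norm{\vs_t^*} < \tfrac12\sqrt{\eps/M}$ turns the first three into numerical multiples of $\eps$ (namely $\tfrac18\eps$, $\tfrac18\eps$, $\tfrac{C_H}{2}\eps$), and with $C_g = 1/192$, $C_H = 1/48$ the total is comfortably below $\eps$; I would keep the constants loose here since the same $C_g,C_H$ must also serve later parts of the analysis, but the arithmetic is routine. For the Hessian bound, by \eqref{ieq:cubic-sol-2} we have $\mH_t \succeq -\tfrac M2\norm{\vs_t^*}\mI \succeq -\tfrac14\sqrt{M\eps}\,\mI$; then
\[
\nabla^2 P(\vx_{t+1}) \succeq \nabla^2 P(\vx_t) - \|\nabla^2 P(\vx_{t+1}) - \nabla^2 P(\vx_t)\|\,\mI
\succeq \mH_t - \|\nabla^2 P(\vx_t) - \mH_t\|\,\mI - M\norm{\vs_t^*}\,\mI,
\]
using Lemma~\ref{lem:Ph-smooth} for the $M\norm{\vs_t^*}$ term and Assumption~\ref{asm:error-g-h} for the $\|\nabla^2 P(\vx_t)-\mH_t\| \le C_H\sqrt{M\eps}$ term. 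Collecting: $-\tfrac14\sqrt{M\eps} - C_H\sqrt{M\eps} - \tfrac12\sqrt{M\eps} > -\sqrt{M\eps}$ once $C_H = 1/48$, giving $\nabla^2 P(\vx_{t+1}) \succeq -\sqrt{M\eps}\,\mI$.

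I do not expect a genuine obstacle here; the statement is essentially a bookkeeping consequence of the three cited lemmas plus the error assumption. The one point that needs a little care is making sure Lemma~\ref{lem:h-smooth} and Lemma~\ref{lem:Ph-smooth} are invoked with the \emph{same} constant $M$ that the algorithm uses in the cubic regularizer, so that \eqref{ieq:cubic-sol-1}--\eqref{ieq:cubic-sol-2} (which hold for the subproblem's own coefficient) and the smoothness inequalities combine without a mismatch — this is why the paper fixes $M = 4\sqrt2\,\kappa^3\rho$ globally. The only mildly delicate choice is that the constants $C_g,C_H$ are not optimized for this lemma alone but chosen small enough to also survive the descent-lemma argument elsewhere, so I would simply verify the inequalities hold with slack rather than chase the tightest thresholds.
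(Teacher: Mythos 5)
Your proposal is correct and follows essentially the same route as the paper: the same four-term decomposition of $\nabla P(\vx_{t+1})$ using \eqref{ieq:cubic-sol-1} and \eqref{ieq:h-smooth1}, and the same chain $\nabla^2 P(\vx_{t+1}) \succeq \mH_t - \norm{\nabla^2P(\vx_t)-\mH_t}\mI - \norm{\nabla^2P(\vx_{t+1})-\nabla^2P(\vx_t)}\mI$ combined with \eqref{ieq:cubic-sol-2}. The paper just runs the argument directly (bounding $\norm{\vs_t^*}$ from below given that one of the two SSP conditions fails) rather than as a contrapositive, which is logically equivalent.
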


By Lemma \ref{lem:norm-s}, we can show that MCN with sufficient accurate gradient and Hessian estimators can find an $\big(\eps, \sqrt{M\eps}\,\big)$-SSP of $P(\vx)$ with $T=\fO\big(\kappa^{1.5}\sqrt{\rho}\eps^{-1.5}\big)$ number of iterations as follows.
\begin{thm}\label{thm:MCN}
Under Assumption~\ref{asm:g-smooth}-\ref{asm:lower}, we run Algorithm~\ref{alg:MCN} with $T=\big\lceil 192(P(\vx_0) - P^*)\sqrt{M}\eps^{-1.5}\big\rceil+1$; and further suppose $K_t$ is sufficient large so that results $\vg_t$ and $\mH_t$ satisfy Assumption~\ref{asm:error-g-h} with $C_g=1/192$ and $C_H=1/48$. Then the output $\hx$ is an $\big(\eps, \sqrt{M\eps}\,\big)$-SSP of $P(\vx)$.
\end{thm}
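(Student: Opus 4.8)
The plan is to run the standard ``sufficient decrease'' argument for cubic Newton (as in Nesterov--Polyak and in \citet{tripuraneni2017stochastic}), but carefully tracking the errors in the inexact gradient $\vg_t$ and Hessian $\mH_t$ controlled by Assumption~\ref{asm:error-g-h}. The goal is to show that as long as the break condition in Line~\ref{line:break} is not triggered, each iteration decreases $P$ by at least $\Omega(\eps^{1.5}/\sqrt{M})$, so after $T = \lceil 192(P(\vx_0)-P^*)\sqrt{M}\eps^{-1.5}\rceil + 1$ iterations the break condition \emph{must} have triggered (otherwise $P$ would drop below $P^*$, contradicting Assumption~\ref{asm:lower}); and then Lemma~\ref{lem:norm-s} (contrapositive) guarantees the point at which we break is the desired $(\eps,\sqrt{M\eps})$-SSP, which is exactly what the algorithm outputs.

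First I would establish the per-step decrease. Using \eqref{ieq:h-smooth2} from Lemma~\ref{lem:h-smooth} with $\vx'=\vx_{t+1}=\vx_t+\vs_t^*$, we get
\[
P(\vx_{t+1}) \le P(\vx_t) + \nabla P(\vx_t)^\top \vs_t^* + \tfrac12 (\vs_t^*)^\top \nabla^2 P(\vx_t)\vs_t^* + \tfrac{M}{6}\norm{\vs_t^*}^3.
\]
Now replace $\nabla P(\vx_t)$ by $\vg_t$ and $\nabla^2 P(\vx_t)$ by $\mH_t$, paying the error terms: $\nabla P(\vx_t)^\top\vs_t^* \le \vg_t^\top\vs_t^* + C_g\eps\norm{\vs_t^*}$ and $\tfrac12(\vs_t^*)^\top\nabla^2 P(\vx_t)\vs_t^* \le \tfrac12(\vs_t^*)^\top\mH_t\vs_t^* + \tfrac12 C_H\sqrt{M\eps}\norm{\vs_t^*}^2$. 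The resulting main term $\vg_t^\top\vs_t^* + \tfrac12(\vs_t^*)^\top\mH_t\vs_t^* + \tfrac{M}{6}\norm{\vs_t^*}^3$ is bounded above by $-\tfrac{M}{12}\norm{\vs_t^*}^3$ via \eqref{ieq:cubic-sol-3} of Lemma~\ref{lem:cubic-exact-solution} (with $M'=M$). So
\[
P(\vx_{t+1}) \le P(\vx_t) - \tfrac{M}{12}\norm{\vs_t^*}^3 + C_g\eps\norm{\vs_t^*} + \tfrac12 C_H\sqrt{M\eps}\norm{\vs_t^*}^2.
\]
Since the break condition failed, $\norm{\vs_t^*} > \tfrac12\sqrt{\eps/M}$, i.e. $\eps < 4M\norm{\vs_t^*}^2$ and $\sqrt{M\eps} < 2M\norm{\vs_t^*}$. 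Substituting, $C_g\eps\norm{\vs_t^*} \le 4C_g M\norm{\vs_t^*}^3$ and $\tfrac12 C_H\sqrt{M\eps}\norm{\vs_t^*}^2 \le C_H M\norm{\vs_t^*}^3$; with $C_g = 1/192$ and $C_H = 1/48$ these sum to at most $(4/192 + 1/48)M\norm{\vs_t^*}^3 = \tfrac{1}{24}M\norm{\vs_t^*}^3$, leaving
\[
P(\vx_{t+1}) \le P(\vx_t) - \tfrac{M}{24}\norm{\vs_t^*}^3 \le P(\vx_t) - \tfrac{M}{24}\cdot\tfrac{1}{8}\Big(\tfrac{\eps}{M}\Big)^{3/2} = P(\vx_t) - \tfrac{1}{192}\cdot\tfrac{\eps^{3/2}}{\sqrt{M}}.
\]

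Then I would finish by the telescoping/pigeonhole step. Summing the decrease over $t=0,\dots,T-1$ and using $P(\vx_T)\ge P^*$ gives $\tfrac{T}{192}\eps^{3/2}M^{-1/2} \le P(\vx_0)-P^*$, i.e. $T \le 192(P(\vx_0)-P^*)\sqrt{M}\eps^{-3/2}$. Since we chose $T$ strictly larger than this, the loop cannot complete all $T$ iterations without the break condition in Line~\ref{line:break} firing at some iteration $t^\star < T$; at that iteration $\norm{\vs_{t^\star}^*}\le\tfrac12\sqrt{\eps/M}$, so the contrapositive of Lemma~\ref{lem:norm-s} tells us $\vx_{t^\star+1}=\vx_{t^\star}+\vs_{t^\star}^*$ is an $(\eps,\sqrt{M\eps})$-SSP of $P$, which is precisely the output $\hx$. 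The main obstacle is purely bookkeeping: getting the constants to line up so that the two error contributions are dominated by half of the $\tfrac{M}{12}\norm{\vs_t^*}^3$ cubic gain --- this is what forces the specific choices $C_g=1/192$, $C_H=1/48$ and, correspondingly, the $192$ in the iteration count. A secondary subtlety is that Lemma~\ref{lem:norm-s} is stated for $\vx_{t+1}$, not $\vx_t$, so one must be careful that the break test is applied to $\vs_t^*$ \emph{before} committing to $\vx_{t+1}$ and that the output index matches; but the algorithm is written exactly so this alignment holds.
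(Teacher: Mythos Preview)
Your proposal is correct and follows essentially the same argument as the paper: both combine \eqref{ieq:h-smooth2} with \eqref{ieq:cubic-sol-3}, absorb the inexactness errors $C_g\eps\norm{\vs_t^*}$ and $\tfrac12 C_H\sqrt{M\eps}\norm{\vs_t^*}^2$ into the cubic term using $\norm{\vs_t^*}>\tfrac12\sqrt{\eps/M}$, obtain the per-step decrease $\tfrac{1}{192}\eps^{3/2}M^{-1/2}$, telescope, and then invoke the contrapositive of Lemma~\ref{lem:norm-s} at the break. The constants and the logic match the paper's proof line by line.
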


\paragraph{Total Complexity of AGD}
Note that MCN (Algorithm~\ref{alg:MCN}) applies AGD to maximize $f(\vx_t,\cdot)$ by using $\vy_{t-1}$ as initialization at the $t$-th round. With such initialization, the following theorem provides the upper bound of total number of gradient calls required by AGD and guarantees that $\vg_t$ and $\mH_t$ in the MCN algorithm satisfy Assumption~\ref{asm:error-g-h}.
\begin{thm}\label{thm:sum_K}
Under Assumption~\ref{asm:g-smooth}-\ref{asm:lower}, we run Algorithm~\ref{alg:MCN} with $K_0=\big\lceil2\sqrt{\kappa}\log\big(\frac{\sqrt{\kappa+1}}{\teps}\norm{\vy^*(\vx_0)}\big)\big\rceil$ and $K_t=\big\lceil2\sqrt{\kappa}\log\big(\frac{\sqrt{\kappa+1}}{\teps}\big(\teps + \kappa\norm{\vs_{t-1}^*}\big)\big)\big\rceil$ for $t\geq 1$,
where $\teps=\min\big\{C_g\eps/\ell, C_H\sqrt{M\eps}/\rho\big\}$ and $\vs_0=\vzero$. Then all $\vg_t$ and $\mH_t$ satisfy the condition of Assumption~\ref{asm:error-g-h}. We also have
{\small\begin{align*} \vspace{-0.3cm}
\sum_{t=0}^{T} K_t
\leq  T {+} 1 {+} \frac{2\sqrt{\kappa}T}{3}\Big[3\log\Big(\frac{\sqrt{\kappa+1}}{\teps}\norm{\vy^*(\vx_0)}\Big){+}\log\Big(8(\kappa{+}1)^{1.5}{+} \frac{8\kappa^3(\kappa+1)^{1.5}}{T\teps^3} \sum_{t=1}^{T}\norm{\vs_{t-1}^*}^3\Big)\Big].
\end{align*}}
\end{thm}

\begin{minipage}{0.48\textwidth}
\begin{algorithm}[H]\small
\caption{Inexact Minimax Cubic-Newton}\label{alg:IMCN}
\begin{algorithmic}[1]
    \STATE \textbf{Input:} $\vx_0\in\BR^{d_x}$, $\vy_{-1}=\vzero$, $T$, $\{K_t\}_{t=0}^T$, $\eps$  \\
    \STATE $c_k = \frac{2}{\sqrt{\ell \mu}}\left(\frac{\sqrt{\mu/\ell}-1}{\sqrt{\mu/\ell}+1}\right)^k$\\ 
    \STATE \textbf{for} $t=0, \cdots$ \textbf{do} \\
    \STATE\quad $\vy_{t} = \AGD\big(-f(\vx_t,\cdot), \vy_{t-1}, K_t, \frac{1}{\ell}, \frac{\sqrt{\kappa}-1}{\sqrt{\kappa}+1}\big)$ \\
    \STATE\quad $\vg_t=\nabla_x f(\vx_t, \vy_t)$ \\[0.03cm]
    \STATE\quad $\mZ_t=-\frac{2}{\ell-\mu}\left(\nabla_{yy}^2 f(\vx_t,\vy_t)+\frac{\ell+\mu}{2}\mI\right)$ \\[0.03cm]
    \STATE\quad Compute $\mH_t$ as equation (\ref{eq:Ht-poly})\\[0.03cm]
    \STATE\quad $(\vs_t, \Delta_t) {=} \text{Cubic-Solver}(\vg_t, \mH_t, \sigma, \fK\left(\eps, \delta'\right))$ \\[0.03cm]
    \STATE\quad $\vx_{t+1} = \vx_t + \vs_t$ \\[0.03cm]
    \STATE\quad\label{line:cond-final} \textbf{if} $\Delta_t > -\frac{1}{128}\sqrt{\eps^3/M}$ \textbf{then}  \\[0.05cm]
    \STATE\quad\quad $\hs = \text{Final-Cubic-Solver}(\vg_t, \mH_t, \eps)$ 
    \STATE\quad\quad $\vx_{t+1} = \vx_t + \hs$ 
    \STATE\quad\quad \textbf{break} 
    \STATE\quad\textbf{end if} 
     \STATE\textbf{end for} 
    \STATE \textbf{Output:} $\hx=\vx_{t+1}$ 
\end{algorithmic}
\end{algorithm}\vskip0.2cm
\end{minipage}
\hspace{0.2cm}
\begin{minipage}{0.48\textwidth}
\begin{algorithm}[H]\small
\caption{Cubic-Solver}\label{alg:cubic-GD}
\begin{algorithmic}[1]
    \STATE \textbf{Input:} $\vg$, $\mH$, $\sigma$, $\fK(\eps,\delta')$  \\[0.1cm]
    \STATE \textbf{if} $\norm{\vg}\geq L^2/M$ \textbf{then}  \\[0.1cm]
    \STATE\quad $R_C{=}-\dfrac{\vg^\top \mH\vg}{M\norm{\vg}^2} {+} \sqrt{\left(\dfrac{\vg^\top \mH \vg}{M\norm{\vg}^2}\right)^2 {+} \dfrac{2\norm{\vg}}{M}}$\\[0.1cm]
    \STATE\quad $\hs = -R_C \vg/\norm{\vg}$ \\[0.1cm]
    \STATE \textbf{else}  \\[0.1cm]
    \STATE\quad $\vs_0 = \vzero$, $\eta = 1/(20L)$ \\[0.1cm]
    \STATE\quad $\tg = \vg + \sigma{\bm\zeta}$ for ${\bm\zeta}\sim {\rm Uniform}(\BS^{d-1})$ \\[0.1cm]
    \STATE\quad \textbf{for} $k=0, \cdots, \fK(\eps,\delta')$ \textbf{do} \\[0.1cm]
    \STATE\quad\quad $\vs_{k+1} = \vs_k - \eta\left(\tg + \mH\vs_k + \frac{M}{2}\norm{\vs_k}\vs_k\right)$ \\[0.1cm]
    \STATE\quad \textbf{end for} \\ [0.1cm]
    \STATE\quad $\hs=\vs_{\fK(\eps,\delta')}$ \\ [0.1cm]
    \STATE \textbf{end if}\\
    \STATE \textbf{Output:} $\hs$ and $\Delta = \vg^\top \hs + \frac{1}{2}\hs^\top \mH \hs + \frac{M}{6}\norm{\hs}^3$
\end{algorithmic}
\end{algorithm}\vskip0.2cm
\end{minipage}

Combining Theorem~\ref{thm:MCN} and Theorem~\ref{thm:sum_K}, we can obtain the total number of gradient oracle calls, Hessian (inverse) oracle calls and exact cubic sub-problem solver calls as follows.


\begin{cor}\label{cor:complexity}
Under Assumption~\ref{asm:g-smooth}-\ref{asm:lower}, running Algorithm~\ref{alg:MCN} with $T{=}\big\lceil 192(P(\vx_0) - P^*)\sqrt{M}\eps'^{-1.5}\big\rceil+1$, $K_0{=}\big\lceil2\sqrt{\kappa}\log\big(\frac{\sqrt{\kappa+1}}{\teps'}\norm{\vy^*(\vx_0)}\big)\big\rceil$ and $K_t{=}\big\lceil2\sqrt{\kappa}\log\big(\frac{\sqrt{\kappa+1}}{\teps'}\big(\teps' + \kappa\norm{\vs_{t-1}}\big)\big)\big\rceil$ for $t \geq 1$,
where $\teps'{=}\min\big\{C_g\eps'/\ell, C_H\sqrt{M\eps'}/\rho\big\}$, $\vs_0{=}\vzero$ and $\eps'{=}2^{-2.5}\eps$,
then the output $\hx$ is an $\big(\eps,\kappa^{1.5}\sqrt{\rho\eps}\,\big)$-SSP of $P(\vx)$ and the number of gradient oracle calls is at most $\tilde\fO\left(\kappa^2\sqrt{\rho}\eps^{-1.5}\right)$. The total number of Hessian (inverse) oracle calls and exact cubic sub-problem solver calls is at most $\fO\left(\sqrt{\rho}\kappa^{1.5}\eps^{-1.5}\right)$.
\end{cor}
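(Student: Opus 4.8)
The plan is to obtain the statement by composing Theorem~\ref{thm:sum_K} and Theorem~\ref{thm:MCN} after running the algorithm with the reduced target accuracy $\eps'=2^{-2.5}\eps$, and then to convert the resulting $(\eps',\sqrt{M\eps'})$-guarantee into the advertised $(\eps,\kappa^{1.5}\sqrt{\rho\eps})$-guarantee together with a count of oracle calls. Concretely, I would first invoke Theorem~\ref{thm:sum_K} with $\eps$ replaced by $\eps'$; since the chosen $K_t$ and $\teps'$ are exactly the quantities appearing there (with $\eps\to\eps'$), this certifies that $\vg_t$ and $\mH_t$ satisfy Assumption~\ref{asm:error-g-h} with $C_g=1/192$, $C_H=1/48$ (accuracy measured against $\eps'$). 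With Assumption~\ref{asm:error-g-h} in force, Theorem~\ref{thm:MCN} applied with parameter $\eps'$ and $T=\lceil 192(P(\vx_0)-P^*)\sqrt{M}\,\eps'^{-1.5}\rceil+1$ shows the output $\hx$ is an $(\eps',\sqrt{M\eps'})$-second-order stationary point of $P$.

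It then remains to do the arithmetic. Since $M=4\sqrt2\,\kappa^3\rho$ and $\eps'=2^{-2.5}\eps$ we have $M\eps'=4\sqrt2\cdot 2^{-2.5}\,\kappa^3\rho\eps=\kappa^3\rho\eps$, hence $\sqrt{M\eps'}=\kappa^{1.5}\sqrt{\rho\eps}$; and trivially $\eps'\le\eps$. Therefore $\norm{\nabla P(\hx)}\le\eps'\le\eps$ and $\nabla^2 P(\hx)\succeq-\sqrt{M\eps'}\,\mI=-\kappa^{1.5}\sqrt{\rho\eps}\,\mI$, i.e. $\hx$ is an $(\eps,\kappa^{1.5}\sqrt{\rho\eps})$-SSP of $P$. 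The particular rescaling $\eps'=2^{-2.5}\eps$ is chosen precisely so that $\sqrt{M\eps'}$ collapses to $\kappa^{1.5}\sqrt{\rho\eps}$ without leftover constants.

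For the complexity I would read off the bound on $\sum_{t=0}^TK_t$ from Theorem~\ref{thm:sum_K} (with $\eps\to\eps'$). Its prefactor is $\tfrac{2\sqrt\kappa T}{3}$; because $\sqrt{M}=\Theta(\kappa^{1.5}\sqrt\rho)$ and $\eps'=\Theta(\eps)$, we get $T=\fO(\kappa^{1.5}\sqrt\rho\,\eps^{-1.5})$, so the prefactor is $\fO(\kappa^2\sqrt\rho\,\eps^{-1.5})$. The remaining bracketed factor is a sum of two logarithms, and I claim it is $\tilde\fO(1)$: the telescoping inequality from the proof of Theorem~\ref{thm:MCN}, namely $\tfrac{M}{24}\norm{\vs_t^*}^3\le P(\vx_t)-P(\vx_{t+1})$ (valid for every step actually taken, since up to the break one has $\norm{\vs_t^*}\ge\tfrac12\sqrt{\eps'/M}$), sums to $\sum_{t=1}^{T}\norm{\vs_{t-1}}^3\le 24(P(\vx_0)-P^*)/M$. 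Plugging this estimate, the explicit value of $T$, and $\teps'=\min\{C_g\eps'/\ell,\,C_H\sqrt{M\eps'}/\rho\}$ into the argument $8(\kappa+1)^{1.5}+\tfrac{8\kappa^3(\kappa+1)^{1.5}}{T\teps'^3}\sum_{t=1}^{T}\norm{\vs_{t-1}}^3$ bounds it by a fixed polynomial in $\kappa,\rho,1/\mu,1/\eps$ and $\norm{\vy^*(\vx_0)}$; the same holds for $\log(\sqrt{\kappa+1}\,\norm{\vy^*(\vx_0)}/\teps')$. Hence the bracket is $\tilde\fO(1)$ and $\sum_{t=0}^TK_t=\tilde\fO(\kappa^2\sqrt\rho\,\eps^{-1.5})$. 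Finally, each of the at most $T$ rounds forms one Schur-complement Hessian $\mH_t$ (one matrix inverse) and solves one exact cubic subproblem, so those counts are $\fO(T)=\fO(\sqrt\rho\,\kappa^{1.5}\eps^{-1.5})$.

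The only genuine work is the penultimate step: verifying that no super-polynomial quantity hides inside the two logarithms, which relies on the descent-based bound $\sum_t\norm{\vs_t^*}^3=\fO((P(\vx_0)-P^*)/M)$ from the proof of Theorem~\ref{thm:MCN}; everything else is a direct substitution into the two preceding theorems.
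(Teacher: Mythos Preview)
Your proposal is correct and follows essentially the same route as the paper: combine Theorem~\ref{thm:sum_K} and Theorem~\ref{thm:MCN} at accuracy $\eps'$, use the identity $M\eps'=\kappa^3\rho\eps$ to rewrite the SSP guarantee, and control the logarithmic factor in $\sum_t K_t$ via the telescoping bound $\sum_t\norm{\vs_t^*}^3\le 24(P(\vx_0)-P^*)/M$ coming from inequality~(\ref{ieq:s-P}). The paper's proof is terser but invokes exactly these two ingredients in the same order.
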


Note that MCN method needs to construct the Hessian estimator $\mH_t=\mH(\vx_t,\vy_t)$
and solves the cubic regularized sub-problem~(\ref{prob:sub-cubic}) in each round. Constructing $\mH_t$ requires calling the second-order oracle at $(\vx_t, \vy_t)$ and taking $\fO\left(d_y^3+d_xd_y^2+d_x^2d_y\right)$ flops for matrix multiplication and inversion. Solving sub-problem~(\ref{prob:sub-cubic}) requires $\fO\left(d_x^3\right)$ flops for matrix factorization or inversion~\cite{cartis2011aadaptive,cartis2011badaptive}. 
Hence, besides the gradient calls from AGD step, MCN requires $\fO\left(d_x^3+d_y^3\right)$ time complexity in each round and its space complexity is $\fO\left(d_x^2+d_y^2\right)$.

\paragraph{Approximate Local Minimax Point} Under the strict-minimax condition, we can find an approximate local minimax point by performing an additional AGD procedure on the output of  MCN.

\begin{cor}\label{cor:local-minimax}
Suppose $f(\vx,\vy)$ is $(\alpha,\beta,\gamma)$-strict-minimax and satisfies Assumption~\ref{asm:g-smooth}-\ref{asm:lower}. Let $\hx$ be the output of running Algorithm~\ref{alg:MCN} with the setting of Corollary~\ref{cor:complexity} and $\eps=\min\left\{\alpha/3,~\beta^2/(8\kappa^3\rho)\right\}$.
Let $\hy=\AGD\big(-f(\hx,\cdot), \vy_{t}, \hK, \frac{1}{\ell}, \frac{\sqrt{\kappa}-1}{\sqrt{\kappa}+1}\big)$
where $t$ corresponds to the last iteration of Algorithm~\ref{alg:MCN} such that $\hx=\vx_t+\vs_t^*$ and $\hK = \sqrt{\kappa}\log\big(\min\big\{\frac{\alpha}{2\ell},~ \frac{\beta}{8\kappa^2\rho}\big\} \big/ \big(\sqrt{\kappa+1}\big(\teps + \frac{\kappa}{2^{2.25}}\sqrt{\frac{\eps}{M}}\,\big)\big)\big).$
Then there exists a local minimax point $(\vx^*, \vy^*)$ of $f(\vx,\vy)$ such that
$\norm{\vx^*-\hx}^2+\norm{\vy^*-\hy}^2\leq\gamma^2$.
\end{cor}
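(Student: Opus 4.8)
The plan is to chain together the three guarantees already established in this section and then invoke the strict-minimax structure. First, by Corollary~\ref{cor:complexity}, the output $\hx = \vx_t + \vs_t^*$ is an $\big(\eps, \kappa^{1.5}\sqrt{\rho\eps}\big)$-second-order stationary point of $P$, i.e. $\norm{\nabla P(\hx)} \leq \eps$ and $\nabla^2 P(\hx) \succeq -\kappa^{1.5}\sqrt{\rho\eps}\,\mI$. With the stated choice $\eps = \min\{\alpha/3, \beta^2/(8\kappa^3\rho)\}$, I would translate these into statements about $f$ at the point $(\hx, \vy^*(\hx))$: by Lemma~\ref{lem:P-smooth}, $\nabla P(\hx) = \nabla_x f(\hx, \vy^*(\hx))$, so $\norm{\nabla_x f(\hx, \vy^*(\hx))}\leq\eps\leq\alpha/3$; and since $\vy^*(\hx)$ is the exact maximizer, $\nabla_y f(\hx,\vy^*(\hx)) = \vzero$, giving $\norm{\nabla f(\hx,\vy^*(\hx))}\leq\alpha/3$. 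By Lemma~\ref{lem:P-Hessian}, $\nabla^2 P(\hx)$ equals the Schur complement $\nabla_{xx}^2 f - \nabla_{xy}^2 f(\nabla_{yy}^2 f)^{-1}\nabla_{yx}^2 f$ evaluated at $(\hx,\vy^*(\hx))$, so its smallest eigenvalue is at least $-\kappa^{1.5}\sqrt{\rho\eps} \geq -\beta/2$ by the choice of $\eps$ (here $\kappa^{1.5}\sqrt{\rho\eps}\leq\kappa^{1.5}\sqrt{\rho\cdot\beta^2/(8\kappa^3\rho)} = \beta/(2\sqrt{2}) \leq \beta/2$). Hence the second bullet of Definition~\ref{dfn:strict} fails at $(\hx,\vy^*(\hx))$ since its Schur complement eigenvalue exceeds $-\beta$.

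Next I would control the AGD iterate $\hy$. The starting point of this AGD call is $\vy_t$, which by the induction in the proof of Theorem~\ref{thm:sum_K} satisfies $\norm{\vy_t - \vy^*(\vx_t)}\leq\teps$; combined with the $\kappa$-Lipschitzness of $\vy^*(\cdot)$ from Lemma~\ref{lem:P-smooth} and $\norm{\hx - \vx_t} = \norm{\vs_t^*}\leq\frac{1}{2}\sqrt{\eps'/M} = 2^{-2.25}\sqrt{\eps/M}$ (the break condition, with $\eps' = 2^{-2.5}\eps$), we get $\norm{\vy_t - \vy^*(\hx)}\leq\teps + 2^{-2.25}\kappa\sqrt{\eps/M}$. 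Applying Lemma~\ref{lem:agd} to $-f(\hx,\cdot)$ (which is $\ell$-smooth and $\mu$-strongly convex, condition number $\kappa$) with $\hK$ iterations shrinks this distance by the factor $\sqrt{\kappa+1}(1-1/\sqrt\kappa)^{\hK/2}\leq\sqrt{\kappa+1}\exp(-\hK/(2\sqrt\kappa))$; the definition of $\hK$ is engineered precisely so that $\norm{\hy - \vy^*(\hx)}\leq\min\{\alpha/(2\ell),\ \beta/(8\kappa^2\rho)\}$.

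Now I would propagate this error back to the gradient and to the Schur complement at the actual pair $(\hx,\hy)$. By Assumption~\ref{asm:g-smooth}, $\norm{\nabla f(\hx,\hy) - \nabla f(\hx,\vy^*(\hx))}\leq\ell\sqrt{2}\,\norm{\hy - \vy^*(\hx)}$ — or more simply $\norm{\nabla_x f(\hx,\hy) - \nabla_x f(\hx,\vy^*(\hx))}\leq\ell\norm{\hy-\vy^*(\hx)}\leq\alpha/2$ and $\norm{\nabla_y f(\hx,\hy)}\leq\ell\norm{\hy-\vy^*(\hx)}\leq\alpha/2$, so $\norm{\nabla f(\hx,\hy)}\leq\alpha/3 + \alpha/2$; I would sharpen the bookkeeping (e.g. keep the $\alpha/3$ term and use the tighter $\alpha/6$ budget implicit in $\alpha/(2\ell)$ vs $\ell$) so the total stays $\leq\alpha$, making the first bullet of Definition~\ref{dfn:strict} fail. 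For the Schur complement, I would use Lemma~\ref{lem:Ph-smooth}-style reasoning — the map $\vy\mapsto \nabla_{xx}^2 f(\hx,\vy) - \nabla_{xy}^2 f(\hx,\vy)(\nabla_{yy}^2 f(\hx,\vy))^{-1}\nabla_{yx}^2 f(\hx,\vy)$ is Lipschitz with a constant of order $\kappa^2\rho$ (this is essentially the internal estimate inside the proof of Lemma~\ref{lem:Ph-smooth}, which gives $4\sqrt2\kappa^3\rho$ for the $\vx$-Lipschitzness but a $\kappa^2\rho$-type constant in $\vy$; alternatively bound it crudely and absorb constants) — so that $\lambda_{\min}$ of the Schur complement at $(\hx,\hy)$ is at least $-\beta/2 - (\text{const}\cdot\kappa^2\rho)\cdot\beta/(8\kappa^2\rho) \geq -\beta$, so the second bullet of Definition~\ref{dfn:strict} also fails at $(\hx,\hy)$. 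Since $f$ is $(\alpha,\beta,\gamma)$-strict-minimax, the only remaining possibility is the third bullet: there is a local minimax point $(\vx^*,\vy^*)$ with $\norm{\hx - \vx^*}^2 + \norm{\hy - \vy^*}^2\leq\gamma^2$, which is the claim.

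The main obstacle I anticipate is the Lipschitz-in-$\vy$ estimate for the Schur complement and pinning down the exact constants so that the two thresholds in $\hK$ — namely $\alpha/(2\ell)$ and $\beta/(8\kappa^2\rho)$ — really do buy the slack needed to push both $\norm{\nabla f}$ below $\alpha$ and $\lambda_{\min}$ above $-\beta$ after adding the $\alpha/3$ and $-\beta/2$ contributions already present; this requires carefully tracking how $\norm{(\nabla_{yy}^2 f)^{-1}}\leq 1/\mu$, $\norm{\nabla_{xy}^2 f}\leq\ell$, and $\rho$-Hessian-Lipschitzness combine (a telescoping/product-rule expansion as in the proof of Lemma~\ref{lem:Ph-smooth}), but it is routine once set up. A minor secondary point is verifying $\hK$ as written is a valid (nonnegative) iteration count, i.e. that the argument of the logarithm is $\geq 1$, which holds because $\teps + 2^{-2.25}\kappa\sqrt{\eps/M}$ is comparable to the target tolerance up to the $\sqrt{\kappa+1}$ and the $\kappa$-vs-$\kappa^2$ gap; if needed one takes a ceiling.
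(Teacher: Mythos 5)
Your proposal follows the paper's own argument step for step: use the second-order stationarity of $P$ at $\hx$, bound $\norm{\hy - \vy^*(\hx)}$ via Lemma~\ref{lem:agd} seeded from $\vy_t$ (with the break condition controlling $\norm{\vs_t^*}$), and propagate the error through the gradient and Schur complement of $f$ using the paper's Lemma~\ref{lem:lip-another}, which supplies exactly the $3\kappa^2\rho$ Lipschitz-in-$\vy$ constant you anticipated. The one loose end you flagged---getting $\norm{\nabla f(\hx,\hy)}\leq\alpha$ from an $x$-block bound of $5\alpha/6$ and a $y$-block bound of $\alpha/2$---resolves not by sharper constants but by the Euclidean block structure, $\norm{\nabla f(\hx,\hy)}^2 = \norm{\nabla_x f(\hx,\hy)}^2 + \norm{\nabla_y f(\hx,\hy)}^2 \leq \bigl(\tfrac{25}{36}+\tfrac{1}{4}\bigr)\alpha^2 < \alpha^2$.
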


\section{Inexact Minimax Cubic Newton Algorithm} \label{sec:imcn}

In this section, we proposed an efficient algorithm called inexact minimax cubic Newton (IMCN), which avoids any operation related to the second-order oracle and only requires $\tilde\fO\left(\kappa^{1.5}\ell\eps^{-2}\right)$ Hessian-vector product calls and $\tilde\fO\left(\kappa^{2}\sqrt{\rho}\eps^{-1.5}\right)$ gradient calls in total to find $\left(\eps,\kappa^{1.5}\sqrt{\rho\eps}\,\right)$-SSP of $P(\vx)$. 
Since the Hessian-vector products can be computed as fast as gradients~\cite{pearlmutter1994fast,Schraudolph2002Fast}, IMCN is much more efficient than MCN in high-dimensional cases.

\begin{wrapfigure}[12]{R}{0.45\textwidth}
\vspace{-0.4cm}
\begin{minipage}{0.45\textwidth}
\begin{algorithm}[H]
\caption{Final-Cubic-Solver}\label{alg:cubic-final}
\begin{algorithmic}[1]
    \STATE \textbf{Input:} $\vg$, $\mH$, $\eps$  \\[0.1cm]
    \STATE $\vs_0=\vzero$, $\vg_0=\vg$, $\eta=1/(20L)$ \\[0.1cm]
    \STATE \textbf{for} $t=0,\dots$ \textbf{do} \\[0.1cm]
    \STATE~~ \textbf{if} $\norm{\vg_t}\leq\eps/2$ \textbf{then break}  \\[0.1cm]
    \STATE~~ $\vs_{t+1}= \vs_t - \eta\vg_t$ \\[0.1cm]
    \STATE~~ $\vg_{t+1} = \vg + \mH\vs_{t+1} + \frac{M}{2}\norm{\vs_{t+1}}\vs_{t+1}$
    \\[0.1cm]
    \STATE\textbf{end for} \\ [0.1cm]
    \STATE\textbf{Output:} $\vs_t$
\end{algorithmic}
\end{algorithm}
\end{minipage}
\end{wrapfigure}

We present the details of IMCN in Algorithm~\ref{alg:IMCN}. Unlike MCN which solves problem~(\ref{prob:sub-cubic}) exactly, IMCN uses gradient-based cubic sub-problem solver (Algorithm~\ref{alg:cubic-GD})~\cite{carmon2019gradient} to compute 
\begin{align*}
\vs_t \approx \argmin\limits_{\vs\in\BR^{d_x}} m_t(\vs)\triangleq\vg_t^\top \vs + \frac{1}{2}\vs^\top \mH_t \vs + \frac{M}{6}\norm{\vs}^3.
\end{align*}
If the condition in line 10 of Algorithm \ref{alg:IMCN} holds, the point $\vx_t+\vs_t^*$ should be a desired approximate second-order stationary point. Due to $\vs_t^*$ is hard to obtain, we introduce additional gradient descent steps (Algorithm~\ref{alg:cubic-final}) to approximate it by $\hs$ and use $\vx_t+\hs$ as the final output.

The design and the convergence analysis of IMCN is more challenging than existing inexact cubic Newton algorithms for minimization problems~\cite{kohler2017sub,tripuraneni2017stochastic} since the Hessian estimator in IMCN has a more complicated structure. 
To address this issue, we approximate the Hessian $\nabla^2 P(\vx_t)$ by $\mH_t$ as 
\begin{equation}\label{eq:Ht-poly}
    \mH_t = \nabla_{xx}^2 f(\vx_t,\vy_t) + \nabla_{xy}^2 f(\vx_t,\vy_t)\mC_t\nabla_{yx}^2 f(\vx_t,\vy_t),
\end{equation}
where $\mC_t=\frac{c_0}{4\ell}\mI+\frac{1}{2\ell}\sum_{k=1}^{K'}c_k\mT_k(\mZ_t)$ and $\mZ_t=\frac{4\ell}{\ell-\mu}\big(-\frac{1}{2\ell}\nabla_{yy}^2 f(\vx_t,\vy_t)-\frac{\ell+\mu}{4\ell}\mI\big).$
Here $\mT_k(\cdot)$ is the matrix Chebyshev polynomials (shown in Section~\ref{sec:Chebyshev}) leading to $\mC_t\approx-\left(\nabla_{yy}^2(\vx_t,\vy_t)\right)^{-1}$.
Note that we never construct matrix $\mH_t$ explicitly in implementation because all operations related to $\mH_t$ can be reduced to compute Hessian-vector products, which avoid any second-order oracle calls or matrix factorization/inversion.

Then we provide the convergence analysis for the IMCN algorithm. 
Throughout this section, we let $L=2\kappa\ell$ and $M=4\sqrt{2}\kappa^3\rho$ be the Lipschitz continuous constants of $\nabla P(x)$ and $\nabla^2 P(x)$. 
We suppose $\eps\leq L^2/M$, otherwise, the second-order condition $\nabla^2 P(\vx)\succeq-\sqrt{M\eps}\,\mI$ always holds and we only need to use gradient methods~\cite{lin2019gradient,lin2020near} to find first-order stationary point.

\subsection{Approximating Hessian by Matrix Chebyshev Polynomials}\label{sec:Chebyshev}

We first show the error bound of matrix inverse approximation via matrix Chebyshev polynomials.

\begin{lem}\label{lem:Chebyshev}
Suppose symmetric matrix $\mX\in\BR^{d\times d}$ satisfies $\mu' \mI \preceq  \mX \preceq \ell' \mI$ with $0<\mu'\leq\ell'<1$, then we have $\big\|\mX^{-1} - \big(\frac{c_0}{2}\mI+\sum_{k=1}^{K'}c_k\mT_k(\mZ')\big)\big\|_2 
\leq  \frac{\sqrt{\ell' /\mu'}-1}{\sqrt{\ell' \mu'}}\big(1-2/(\sqrt{\ell' /\mu'}\,+1)\big)^{K'}$
where $\mZ'=\frac{2}{\ell' -\mu'}\big(\mX-\frac{\ell' +\mu'}{2}\mI\big)$, $c_k = \frac{2}{\sqrt{\ell' \mu'}}\Big(\frac{\sqrt{{\mu'/\ell'}}-1}{\sqrt{{\mu'/\ell'}}+1}\Big)^k$, and $\mT_k(\cdot)$ are matrix Chebyshev polynomials with $T_0(\mZ')=\mI$, $T_1(\mZ')=\mZ'$ and $\mT_k(\mZ')=2\mZ'\mT_{k-1}(\mZ') - \mT_{k-2}(\mZ')$ for $k \geq 2$.

\end{lem}

\noindent Based on Lemma~\ref{lem:Chebyshev}, we can bound the approximation error of the Hessian estimator $\mH_t$ as follows.
\begin{lem}\label{lem:inverse-approx}
Using the notation of Algorithm~\ref{alg:IMCN}, we have
\begin{align*}
  \norm{\nabla_{xx}^2P(\vx_t)-\mH_t}  
\leq  3\rho\kappa^2\sqrt{\kappa+1}\Big(1-\frac{1}{\sqrt{\kappa}}\Big)^{K_t/2}\norm{\vy_{t-1}-\vy^*(\vx_t)} 
+ \kappa\ell\Big(1-\frac{2}{\sqrt{\kappa}+1}\Big)^{K'}.
\end{align*}
\end{lem}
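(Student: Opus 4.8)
The plan is to decompose $\nabla^2 P(\vx_t)-\mH_t$ into the error from replacing the true maximizer $\vy^*(\vx_t)$ by the $\AGD$ iterate $\vy_t$, and the error from truncating the matrix Chebyshev series that stands in for $(\nabla^2_{yy}f)^{-1}$. Introduce the exact Schur-complement map $\mS(\vx,\vy)\triangleq\nabla_{xx}^2 f(\vx,\vy)-\nabla_{xy}^2 f(\vx,\vy)\bigl(\nabla_{yy}^2 f(\vx,\vy)\bigr)^{-1}\nabla_{yx}^2 f(\vx,\vy)$, so that $\nabla^2 P(\vx_t)=\mS(\vx_t,\vy^*(\vx_t))$ by Lemma~\ref{lem:P-Hessian}, and write $\mP_t\triangleq\frac{c_0}{4\ell}\mI+\frac{1}{2\ell}\sum_{k=1}^{K'}c_k\mT_k(\mZ_t)$, so that $\mH_t=\nabla_{xx}^2 f(\vx_t,\vy_t)+\nabla_{xy}^2 f(\vx_t,\vy_t)\,\mP_t\,\nabla_{yx}^2 f(\vx_t,\vy_t)$. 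Then
\[
\norm{\nabla^2 P(\vx_t)-\mH_t}\le\norm{\mS(\vx_t,\vy^*(\vx_t))-\mS(\vx_t,\vy_t)}+\norm{\mS(\vx_t,\vy_t)-\mH_t},
\]
and I would bound the two terms on the right by the first and second terms of the claimed inequality, respectively.

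For the first term I would use Lipschitz continuity of $\vy\mapsto\mS(\vx_t,\vy)$: the block $\nabla_{xx}^2 f(\vx_t,\cdot)$ is $\rho$-Lipschitz by Assumption~\ref{asm:h-smooth}, and the Schur correction $\vy\mapsto\nabla_{xy}^2 f(\vx_t,\vy)\bigl(\nabla_{yy}^2 f(\vx_t,\vy)\bigr)^{-1}\nabla_{yx}^2 f(\vx_t,\vy)$ is $3\kappa^2\rho$-Lipschitz by Lemma~\ref{lem:lip-another} in the appendix, so $\norm{\mS(\vx_t,\vy^*(\vx_t))-\mS(\vx_t,\vy_t)}\le 3\kappa^2\rho\,\norm{\vy_t-\vy^*(\vx_t)}$. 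It remains to control $\norm{\vy_t-\vy^*(\vx_t)}$: the function $-f(\vx_t,\cdot)$ is $\ell$-smooth and $\mu$-strongly convex, hence has condition number $\kappa$, so Lemma~\ref{lem:agd} applied to the $\AGD$ call that produces $\vy_t$ — which starts from $\vy_{t-1}$ and takes $K_t$ steps — gives $\norm{\vy_t-\vy^*(\vx_t)}\le\sqrt{\kappa+1}\,(1-1/\sqrt\kappa)^{K_t/2}\norm{\vy_{t-1}-\vy^*(\vx_t)}$. Multiplying the two estimates yields the first term.

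For the second term I would pull the cross-Hessian outside. Since $\nabla_{yy}^2 f(\vx_t,\vy_t)\prec\vzero$ by Assumption~\ref{asm:concave}, $-\bigl(\nabla_{yy}^2 f(\vx_t,\vy_t)\bigr)^{-1}=\bigl({-}\nabla_{yy}^2 f(\vx_t,\vy_t)\bigr)^{-1}$, so
\[
\mS(\vx_t,\vy_t)-\mH_t=\nabla_{xy}^2 f(\vx_t,\vy_t)\Bigl[\bigl({-}\nabla_{yy}^2 f(\vx_t,\vy_t)\bigr)^{-1}-\mP_t\Bigr]\nabla_{yx}^2 f(\vx_t,\vy_t),
\]
whence, by $\norm{\nabla_{xy}^2 f(\vx_t,\vy_t)}\le\ell$ (Assumption~\ref{asm:g-smooth}), the second term is at most $\ell^2\,\norm{\bigl({-}\nabla_{yy}^2 f(\vx_t,\vy_t)\bigr)^{-1}-\mP_t}$. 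To bound this matrix-inverse error I would invoke Lemma~\ref{lem:Chebyshev} with $\mX=\tfrac{1}{2\ell}\bigl({-}\nabla_{yy}^2 f(\vx_t,\vy_t)\bigr)$, which by Assumptions~\ref{asm:g-smooth} and~\ref{asm:concave} is symmetric positive definite with $\tfrac{1}{2\kappa}\mI\preceq\mX\preceq\tfrac12\mI\prec\mI$, so the lemma applies with $\mu'=\tfrac{1}{2\kappa}$, $\ell'=\tfrac12$, $\ell'/\mu'=\kappa$. A direct check shows that for these parameters the matrix $\mZ'$ and the coefficient sequence produced by Lemma~\ref{lem:Chebyshev} reproduce $\mZ_t$ and — after the overall $\tfrac{1}{2\ell}$ rescaling forced by $\mX^{-1}=2\ell\bigl({-}\nabla_{yy}^2 f(\vx_t,\vy_t)\bigr)^{-1}$ — the coefficients in $\mP_t$, so $\mP_t$ equals $\tfrac{1}{2\ell}$ times the degree-$K'$ Chebyshev approximation of $\mX^{-1}$. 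Lemma~\ref{lem:Chebyshev} then gives $\norm{\mX^{-1}-2\ell\,\mP_t}\le\frac{\sqrt\kappa-1}{1/(2\sqrt\kappa)}\bigl(1-\tfrac{2}{\sqrt\kappa+1}\bigr)^{K'}\le 2\kappa\bigl(1-\tfrac{2}{\sqrt\kappa+1}\bigr)^{K'}$, hence $\norm{\bigl({-}\nabla_{yy}^2 f(\vx_t,\vy_t)\bigr)^{-1}-\mP_t}\le\tfrac{\kappa}{\ell}\bigl(1-\tfrac{2}{\sqrt\kappa+1}\bigr)^{K'}$, and multiplication by $\ell^2$ produces the second term $\kappa\ell\bigl(1-\tfrac{2}{\sqrt\kappa+1}\bigr)^{K'}$.

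The triangle-inequality split, the factorization of the correction term, and the substitution of Lemma~\ref{lem:agd} are routine. The step that I expect to need the most care is the reduction to Lemma~\ref{lem:Chebyshev}: one must rescale $-\nabla_{yy}^2 f(\vx_t,\vy_t)$ by exactly $\tfrac{1}{2\ell}$ so that its spectrum lies strictly inside $(0,1)$ as the lemma requires, and then match the resulting $\mZ'$ and coefficients against $\mZ_t$ and the constants $c_k$, $\tfrac{1}{4\ell}$, $\tfrac{1}{2\ell}$ appearing in Algorithm~\ref{alg:IMCN}; it is this bookkeeping that pins the prefactor of the decaying term to $\kappa\ell$. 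The one ingredient I would simply cite is the $\vy$-Lipschitz bound for the Schur-complement correction from the appendix (Lemma~\ref{lem:lip-another}).
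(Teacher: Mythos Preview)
Your proposal is correct and follows essentially the same route as the paper: the same triangle-inequality split into an $\AGD$ error and a Chebyshev-truncation error, the first handled via Lemma~\ref{lem:lip-another} together with Lemma~\ref{lem:agd}, the second via Lemma~\ref{lem:Chebyshev} applied to $-\tfrac{1}{2\ell}\nabla_{yy}^2 f(\vx_t,\vy_t)$ and then sandwiched between the cross-Hessians. The only cosmetic difference is that you factor through the intermediate object $\mS(\vx_t,\vy_t)$ explicitly, whereas the paper writes the same two pieces directly; your more careful discussion of matching $\mZ'$ and the rescaled coefficients to $\mZ_t$ and $\mP_t$ is exactly the ``bookkeeping'' step the paper performs implicitly.
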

Lemma~\ref{lem:inverse-approx} means using AGD with $K_t=\fO\left(\sqrt{\kappa}\log(\kappa\rho/\eps_H)\right)=\tilde\fO\left(\sqrt{\kappa}\,\right)$\footnote{Rigorously speaking, the term $\log(\norm{\vy_{t-1}-\vy^*(\vx_t)})$ also should be considered into the total complexity, which will be discussed in later sections.} and the number of terms for Chebyshev polynomials with $K'=\fO\left(\sqrt{\kappa}\log(\kappa\ell/\eps_H)\right)=\tilde\fO\left(\sqrt{\kappa}\,\right)$ could achieve $\mH_t$ with $\norm{\nabla^2 P(\vx_t)-\mH_t}\leq\eps_H$ for any $\eps_H>0$.

In the implementation of IMCN, all operations related to $\mH_t$ can be viewed as computing Hessian-vector products. Actually, we can obtain $\mH_t\vu'$ with $\fO(K')=\tilde\fO(\sqrt{\kappa})$ Hessian-vector calls for any~$\vu'\in\BR^{d_x}$, which avoids $\fO\left(d_x^2+d_y^2\right)$ space to keep Hessian matrices. The detailed implementation is presented in Appendix~\ref{apsec:imple}.

\subsection{Complexity Analysis for IMCN}
The IMCN method calls a gradient-based sub-problem solver (Algorithm~\ref{alg:cubic-GD}) to optimize the following cubic regularized problem \cite{tripuraneni2017stochastic,carmon2019gradient} in each iteration
\begin{align}\label{prob:cubic-sub}
    \min_{\vs\in\BR^{d_x}}~\tm_t(\vs)\triangleq \vg_t^\top \vs + \frac{1}{2}\vs^\top \mH_t \vs + \frac{M}{6}\norm{\vs}^3.
\end{align}
It requires at most $\fK(\eps,\delta')=\fO\big(\frac{L}{\sqrt{M\eps}}\big(\log\big(\frac{\sqrt{d_x}}{\delta'}\big)+\log\big(\frac{L+\sqrt{\rho\eps}}{\sqrt{M\eps}}\big)\big)\big)$
number of iterations to achieve an approximate solution
with enough accuracy. The detailed analysis for the complexity of Algorithm~\ref{alg:cubic-GD} is deferred to appendix \ref{apsec:imcn}.

Then we bound the total number of iterations for Algorithm~\ref{alg:IMCN}.

\begin{thm}\label{thm:IMCN-T}
Running Algorithm~\ref{alg:IMCN} with $C_\sigma=1/4$, $\delta'=\delta/T$, $T=\lceil626(P(\vx_0) - P^*)\sqrt{M}\eps^{-1.5}\rceil$ and sufficient large $K_t$ and $K'$ such that
Assumption~\ref{asm:error-g-h} holds with $C_g=1/240$ and $C_H=1/200$.
Then the condition $\Delta_t\geq-\frac{1}{128}\sqrt{\eps^3/M}$ in line~11 must hold in no more than $T=\fO\left(\kappa^{1.5}\sqrt{\rho}\eps^{-1.5}\right)$ iterations; and the output $\hx$ is an $(\eps,2\kappa^{1.5}\sqrt{\rho\eps}\,)$-SSP with probability $1-\delta$.
\end{thm}

We also bound the number of gradient calls from AGD procedure in Algorithm~\ref{alg:IMCN} as follows. 
\begin{thm}\label{thm:IMCN-sum_K}
Under the setting of Theorem~\ref{thm:IMCN-T}, if we run Algorithm~\ref{alg:IMCN} with 
{\small\begin{align*}
K'{=}\bigg\lceil\frac{\sqrt{\kappa}{+}1}{2}\log\bigg(\frac{\kappa\ell}{2\min\big\{C_H\sqrt{M\eps}, \eps_HL\big\}}\bigg)\bigg\rceil ~\text{and}~
K_t {=} 
\begin{cases}
\left\lceil2\sqrt{\kappa}\log\left(\frac{\sqrt{\kappa+1}}{\teps}\norm{\vy^*(\vx_0)}\right)\right\rceil ,& \hspace{-0.1cm} t = 0 \\[0.2cm]
\left\lceil2\sqrt{\kappa}\log\left(\frac{\sqrt{\kappa+1}}{\teps}\Big(\teps + \kappa\norm{\vs_{t-1}}\Big)\right)\right\rceil,  & \hspace{-0.1cm} t\geq 1
\end{cases} 
\end{align*}}\\[-0.05cm]
where 
$\teps=\min\big\{C_g\eps/\ell, \min\big\{C_H\sqrt{M\eps}, \eps_HL\big\}/(6\rho\kappa^2)\big\}$ and $\vs_0 = \vzero$,
then it holds that
$\norm{\nabla P(\vx_t)-\vg_t} \leq  C_g\eps$, 
$\norm{\nabla^2 P(\vx_t)-\mH_t} \leq \min\big\{C_H\sqrt{M\eps}, \eps_HL\big\}$
and
{\small\begin{align*}
\sum_{t=0}^{T} K_t 
\leq & T {+} 1 {+} \frac{2\sqrt{\kappa}T}{3}\bigg[\frac{3}{T}\log\Big(\frac{\sqrt{\kappa+1}}{\teps}\norm{\vy^*(\vx_0)}\Big)
{+}\log\Big(8(\kappa+1)^{1.5} + \frac{8\kappa^3(\kappa+1)^{1.5}}{T\teps^3} \sum_{t=1}^{T}\norm{\vs_{t-1}}^3\Big)\bigg].
\end{align*}}
\end{thm}
Note that the value of $K'=\tilde\fO(\sqrt{\kappa})$ corresponds to the number of Hessian vector calls for each iteration of cubic sub-problem solver (Algorithm~\ref{alg:cubic-GD} and \ref{alg:cubic-final}). 
Combining Theorem~\ref{thm:IMCN-T}, Theorem \ref{thm:IMCN-sum_K} and the value of $\fK(\eps,\delta')$,
we obtain the main result for Algorithm \ref{alg:IMCN} as follows.

\begin{cor}\label{cor:complexity2}
Under Assumption~\ref{asm:g-smooth}-\ref{asm:lower}, if we run Algorithm~\ref{alg:IMCN} with 
$C_\sigma=1/4$, $\delta'=\delta/T$, 
{\small\begin{align*}
K'{=}\left\lceil\frac{\sqrt{\kappa}{+}1}{2}\log\Bigg(\frac{\kappa\ell}{2\min\big\{C_H\sqrt{M\eps}, \frac{3L}{25}\big\}}\Bigg)\right\rceil ~\text{and}~
K_t {=} 
\begin{cases}
\left\lceil2\sqrt{\kappa}\log\left(\frac{\sqrt{\kappa+1}}{\teps}\norm{\vy^*(\vx_0)}\right)\right\rceil ,& \hspace{-0.1cm} t = 0 \\[0.2cm]
\left\lceil2\sqrt{\kappa}\log\left(\frac{\sqrt{\kappa+1}}{\teps}\big(\teps + \kappa\norm{\vs_{t-1}}\big)\right)\right\rceil,  & \hspace{-0.1cm} t\geq 1
\end{cases} 
\end{align*}}\\
where $T=\big\lceil626(P(\vx_0) - P^*)\sqrt{M}\eps'^{-1.5}\big\rceil$, $C_g=1/240$, $C_H=1/200$, $\vs_0=\vzero$, $\teps'=\eps/4$ and $\teps=\min\big\{C_g\eps/(4\ell), \min\big\{C_H\sqrt{M\eps}, \frac{3L}{25}\big\}\big/(12\rho\kappa^2)\big\}$,
then the output $\hx$ is an $\big(\eps,\kappa^{1.5}\sqrt{\rho\eps}\,\big)$-second-order stationary point of $P(\vx)$ with probability $1-\delta$ and the number of gradient calls is at most $\tilde\fO\left(\kappa^2\sqrt{\rho}\eps^{-1.5}\right)$. The number of Hessian-vector product calls is at most $\fO\left(\kappa^{1.5}\ell\eps^{-2}\right)$.
\end{cor}

Following the analysis of Corollary~\ref{cor:local-minimax}, we can also achieve an approximate local minimax point by running AGD based on the $\hx$ obtained from Algorithm \ref{alg:IMCN}.

\section{Experiments} \label{sec:exp}
In this section, we conduct empirical studies for our methods against the classical GDA algorithm \cite{lin2019gradient} on both synthetic problem and real-world application.

\subsection{Synthetic Minimax Problem}

We construct the following nonconvex-strongly-concave minimax problem:
\begin{align}\label{prob:w}
\min_{\vx\in\BR^3} \max_{\vy\in\BR^2} f(\vx,\vy) =
w(x_3) - \frac{y_1^2}{40} + x_1 y_1 - \frac{5y_2^2}{2} + x_2y_2,
\end{align}
where $\vx=[x_1, x_2, x_3]^\top$, $\vy=[y_1, y_2]^\top$ and $w(\cdot)$ is the W-shaped scalar function~\cite{tripuraneni2017stochastic} whose exact form is shown in Appendix \ref{apsec:syn}. It is easy to verify that the problem has an strict saddle point at $(\vx_0,\vy_0)=([0,0,0]^\top, [0,0]^\top)$. 


\begin{figure}[ht] 
\centering 
\begin{tabular}{ccc}
    \hspace{-0.4cm}
    \includegraphics[scale=0.25]{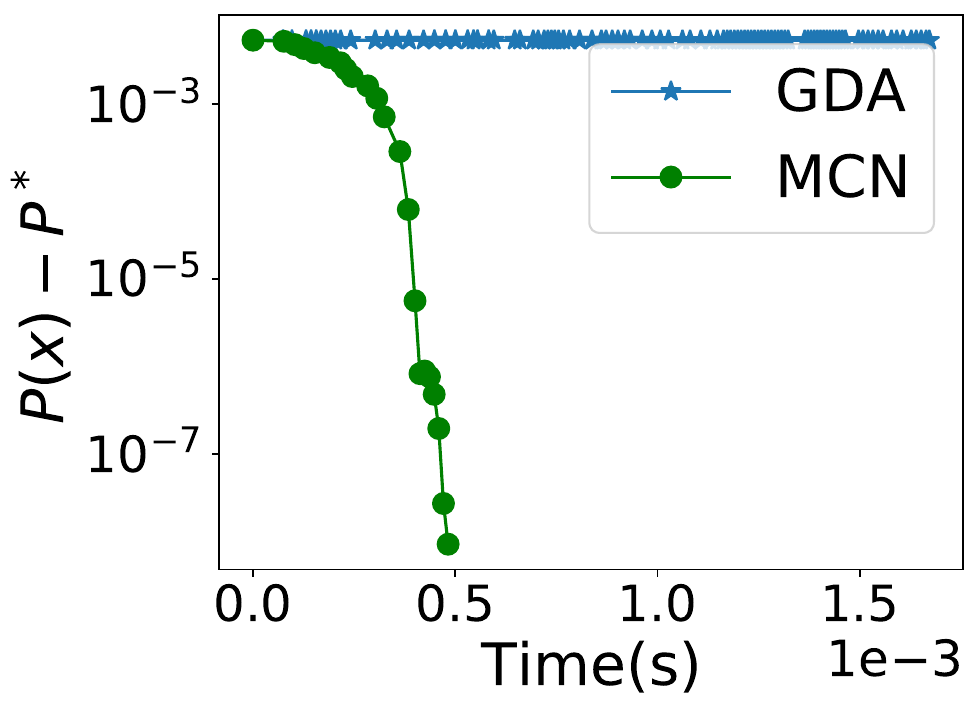} &
    \includegraphics[scale=0.25]{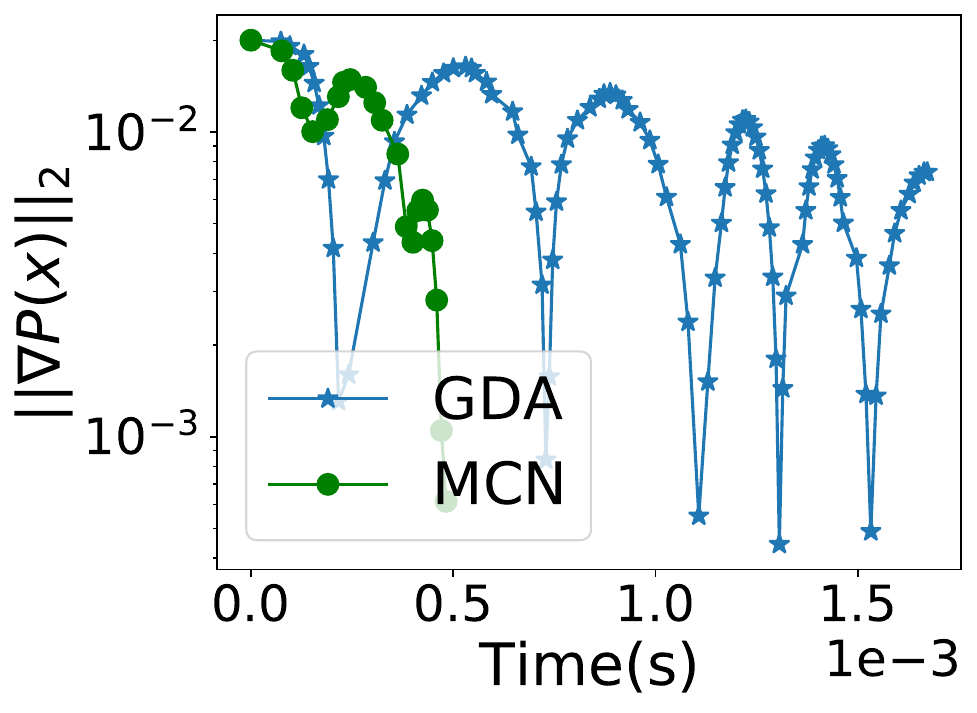} & 
    \includegraphics[scale=0.25]{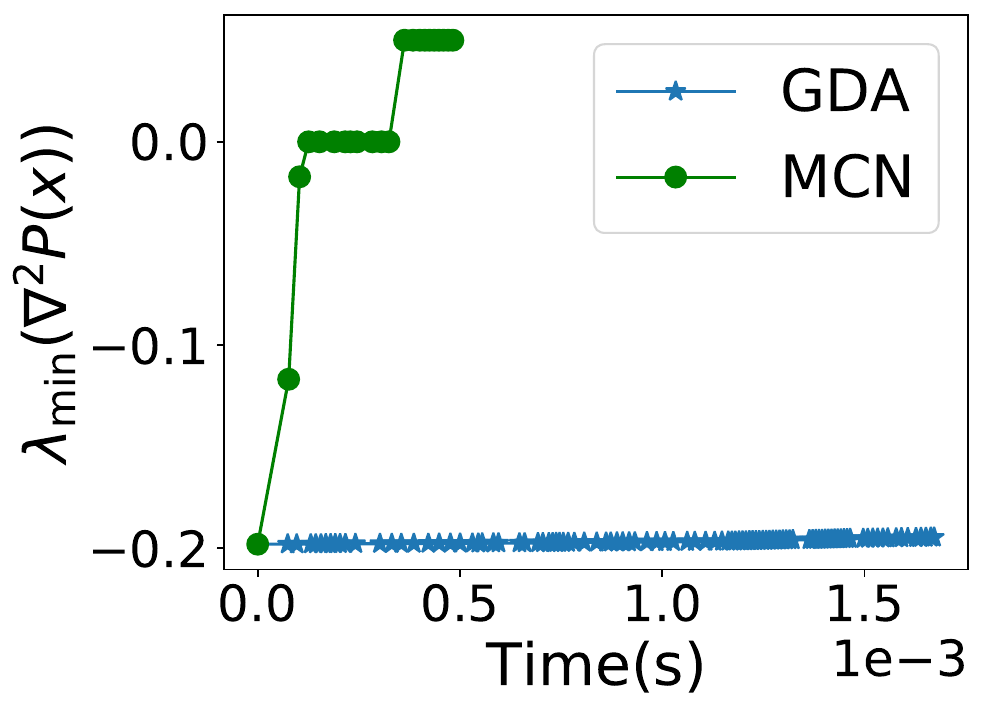}\\
    \small (a) Initial point $(\vx_1,\vy_1)$ & \small  (b) Initial point $(\vx_1,\vy_1)$ & \small  (c) Initial point $(\vx_1,\vy_1)$ \\
    \hspace{-0.4cm}
    \includegraphics[scale=0.25]{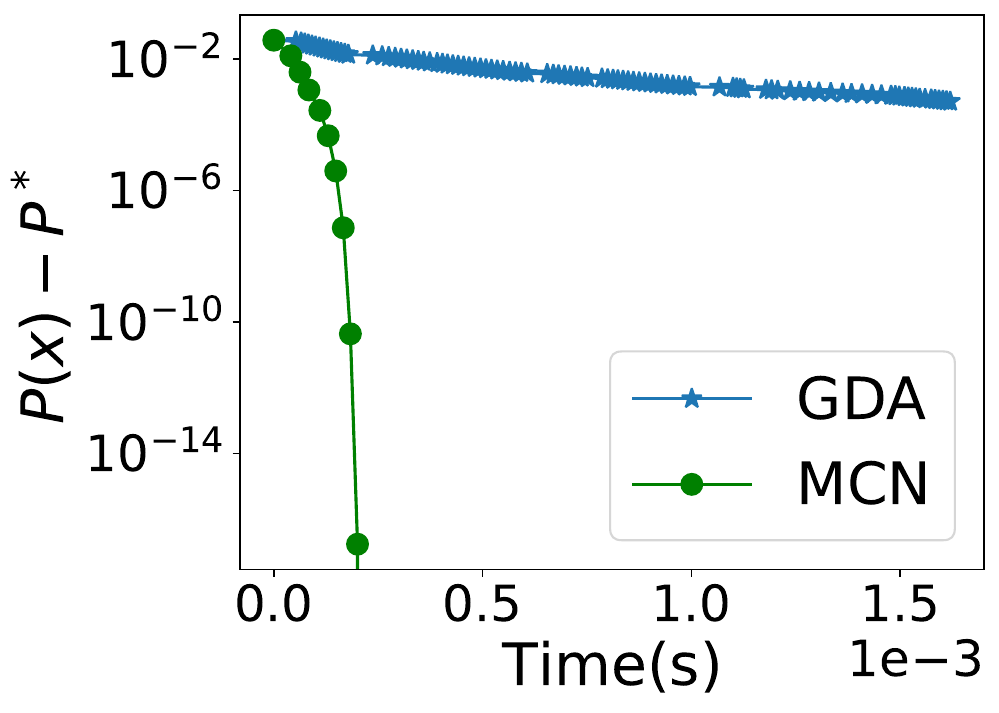} & 
    \includegraphics[scale=0.25]{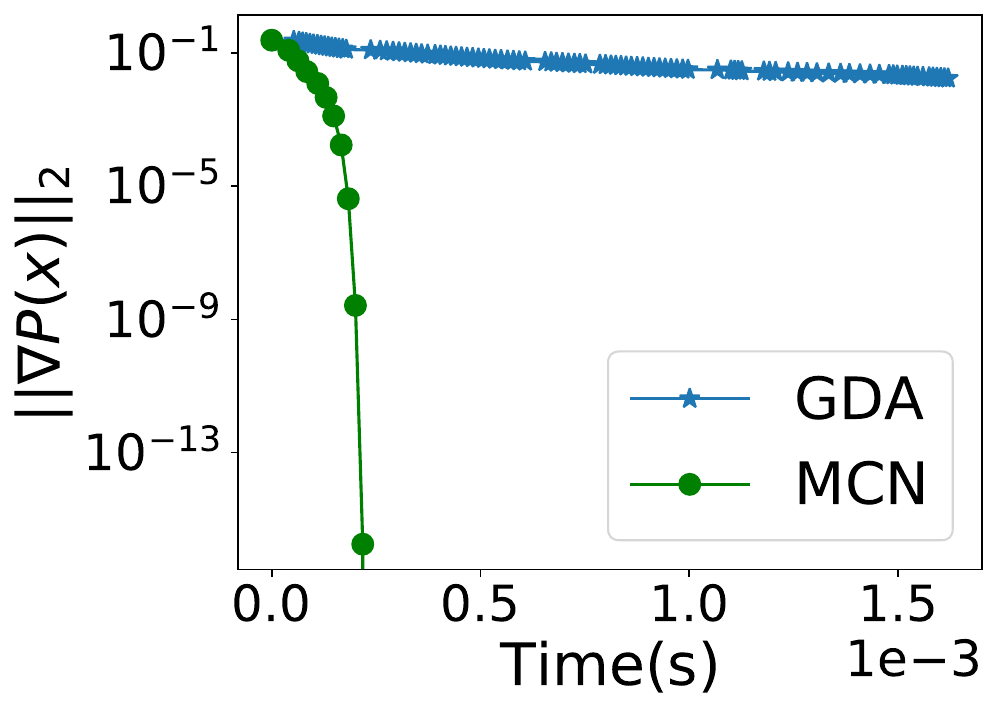} &
    \includegraphics[scale=0.25]{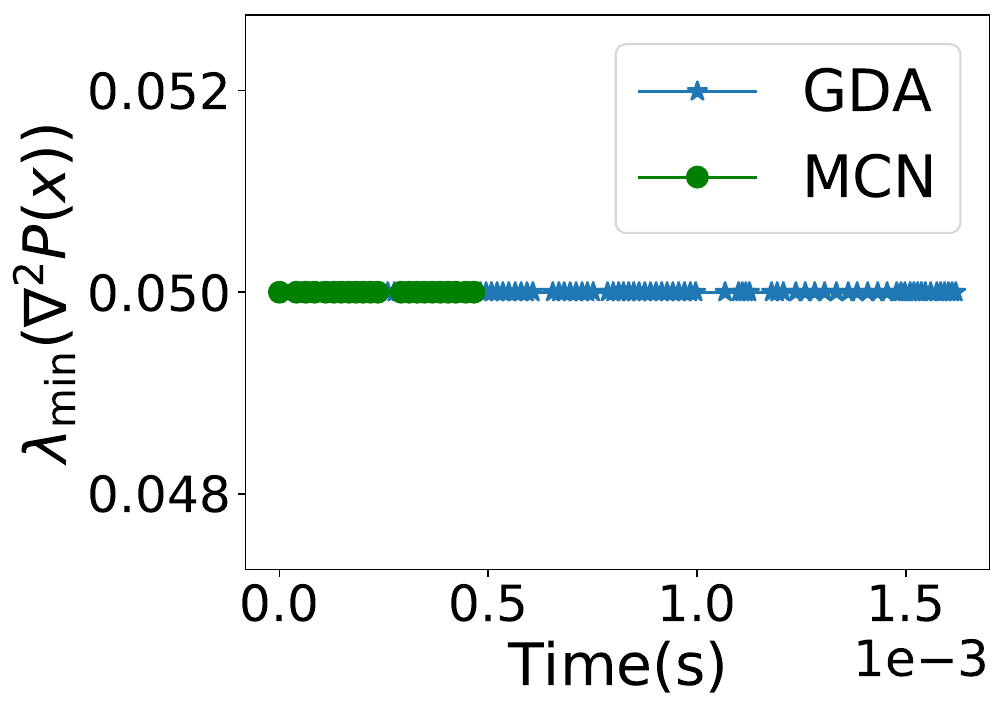}\\
    \small  (d) Initial point $(\vx_2,\vy_2)$ & \small  (e) Initial point $(\vx_2,\vy_2)$ & \small  (f) Initial point $(\vx_2,\vy_2)$  \\[-0.1cm]
\end{tabular}
\caption{We present comparisons of error, gradient norms and Hessian minimum eigenvalues on the synthetic problem. Figure (a), (b) and (c) shows the results with initial point $(\vx_1,\vy_1)$.  Figure (d), (e) and (f) shows the results with initial point $(\vx_2,\vy_2)$.}\label{fig:syn_gh}
\end{figure}

We conduct experiments on problem (\ref{prob:w}) with two different initial points
\begin{align*}
(\vx_1,\vy_1)=\left([10^{-3}, 10^{-3}, 10^{-3}]^\top,[0, 0]^\top\right) \quad\text{and}\quad
(\vx_2,\vy_2)=\left([0, 0, 1]^\top, [0, 0]^\top\right).     
\end{align*}
Notice that problem (\ref{prob:w}) has an strict saddle point at $(\vx_0,\vy_0)=([0,0,0]^\top, [0,0]^\top)$. The initial point $(\vx_1,\vy_1)$ is close to $(\vx_0,\vy_0)$ and $(\vx_2,\vy_2)$ is far from $(\vx_0,\vy_0)$. We compare the proposed algorithm MCN with GDA. The learning rate of GDA and AGD step in MCN is selected from $\left\{c\cdot 10^{-i}: c\in\{1,5\},i\in \{1,2,3\}\right\}$. For MCN method, we choose $M=10$. We compare the running time against $P(\vx)-P^*$, $\|\nabla P(\vx)\|_2$ and $\lambda_{\min}( \nabla^2 P(\vx))$ for two algorithms and plot the results in Figure \ref{fig:syn_gh}. From the curves corresponding to initial point $(\vx_2,\vy_2)$, we observe that both MCN and GDA converge to the minimum when the initial point is far from the strict saddle point, but MCN converges much faster than GDA. When the initial point is close to the strict saddle point, Figure 4(b) shows that the GDA algorithm gets stuck at the strict saddle point since its Hessian minimum eigenvalue are always negative. However, our MCN algorithm can reach the points which have positive Hessian minimum eigenvalues.

\subsection{Domain Adaptation}

The Domain-Adversarial Neural Network (DANN)~\cite{ganin2016domain} is a classic method to domain adaptation. Suppose the source domain dataset is $\fS=\{(\va_i^S,b_i^S)\}_{i=1}^{N_\fS}$ where $\va^\fS_i$ is the feature vector of the $i$-th sample and $b^\fS_i$ is the corresponding label. The target domain dataset $\fT=\{\va_i^\fT\}_{i=1}^{N_\fT}$ only contains features. Then DANN aims to solve the following nonconvex-strongly-concave minimax problem
\begin{align*}
    \min_{[\vx_1;\vx_2]\in\BR^{d_x}}\max_{\vy\in\BR^{d_y}} L_1(\vx_1,\vx_2) - \alpha\cdot L_2(\vx_1,\vy),
\end{align*}
where $L_1(\vx_1,\vx_2) = \frac{1}{N_\fS}\sum_{i=1}^{N_\fS}l(\vx_2;\Phi(\vx_1;\va_i^\fS),b_i^\fS)$ is the loss of supervised learning and
\begin{align*}
L_2(\vx_1,\vy)= \frac{1}{N_\fS}\sum_{i=1}^{N_\fS}D_\fS(h(\vy;\Phi(\vx_1;\va_i^\fS)))
-\frac{1}{N_\fT}\sum_{i=1}^{N_\fT}D_\fT(h(\vy;\Phi(\vx_1;\va_i^\fT)))+\lambda\|\vy\|^2
\end{align*} 
is the domain classification loss. Here $\Phi$ is a single-layer neural network of size $(28\times 28) \times 200$ with parameter $\vx_1$ and $l$ is a two-layer neural network of size $200 \times 20 \times 10$ with parameter $\vx_2$, followed by a cross entropy loss.
We choose the sigmoid function as the activation function for them. With the commonly used logistic loss for $L_2$, we let
$h(\vy;\vz)=1/(1+\exp(-\vy^\top\vz))$,
$D_S(z)=1-\log(z)$ and $D_T(z)=\log(1-z)$. 

\begin{figure}[t] 
\centering 
\begin{tabular}{cccc}
    \hspace{-0.2cm}
    \includegraphics[scale=0.2]{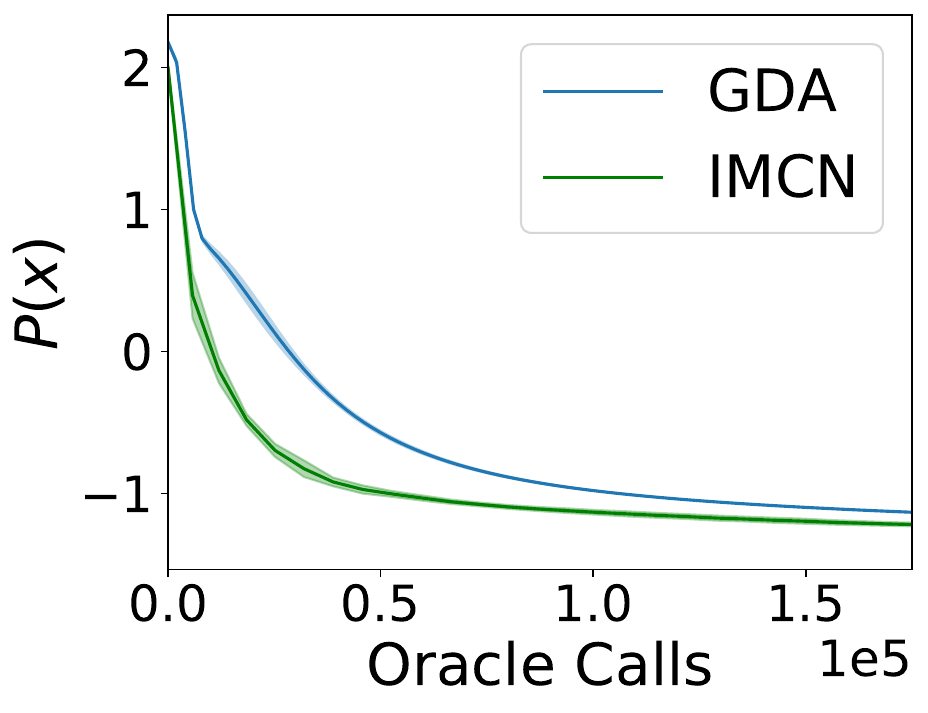} &
    \includegraphics[scale=0.2]{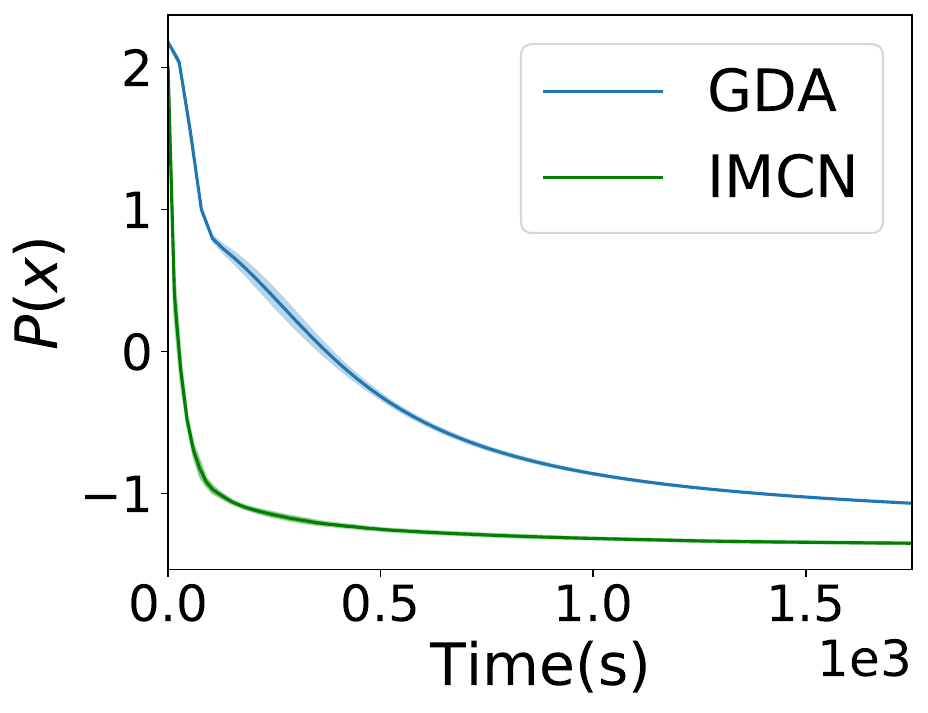} & 
    \includegraphics[scale=0.2]{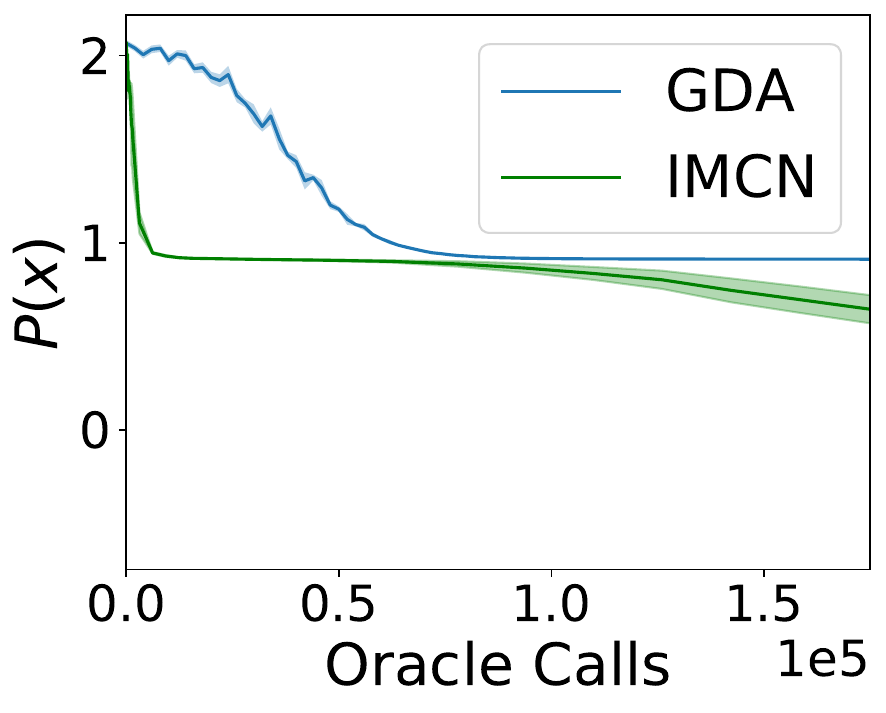} &
    \includegraphics[scale=0.2]{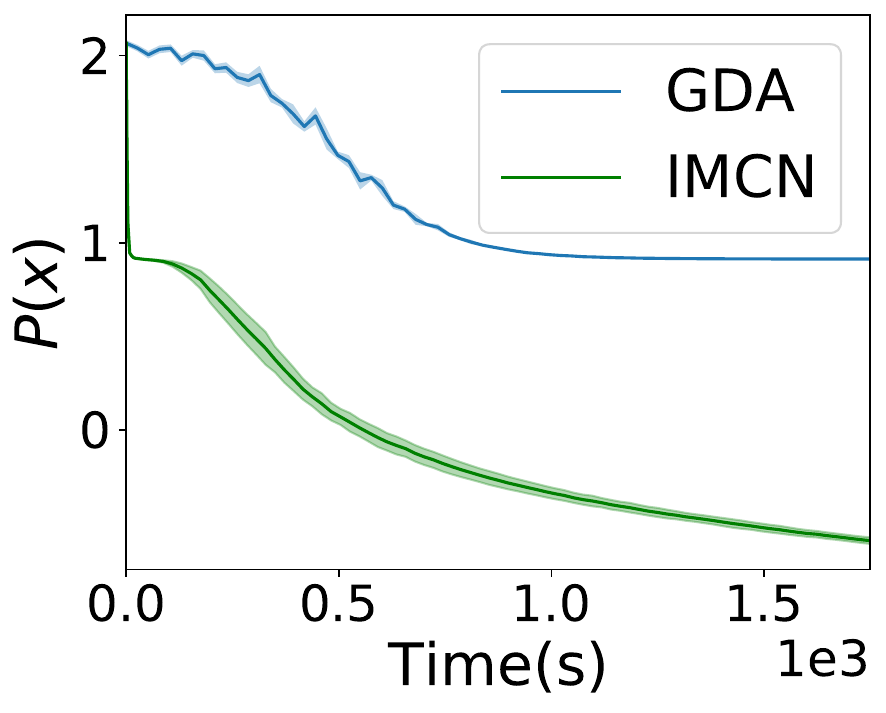}\\
    \small (a) MNIST to MNIST-m & \small  (b) MNIST to MNIST-m & \small  (c) MNIST-m to MNIST & \small  (d) MNIST-m to MNIST  \\[-0.1cm]
\end{tabular}
\caption{We present results on DANN model.  Figure (a) and (b) show the results of domain adaptation from MNIST to MNIST-m. Figure (c) and (d) show the results of domain adaptation from MNIST-m to MNIST.}\label{fig:domain}
\vskip-0.4cm
\end{figure}

Since the dimension of the minimax problem is quite large, we implement the IMCN algorithm instead of the MCN algorithm for efficiency. We compare IMCN with GDA on the domain adaptation problem between two different datasets: MNIST~\cite{lecun1998gradient} and MNIST-m~\cite{ganin2016domain}. Since we do not know the close form of $P(\vx)$, we estimate the value of $P(\vx)=\max_{\vy}f(\vx,\vy)$ by AGD procedure. More details about our experimental setup can be found in Appendix \ref{apsec:real}.

We compare IMCN and GDA algorithms via running time and oracle calls and show the results in Figure \ref{fig:domain}. We run the experiments for five times with different random initialization and report the average results. The oracle calls of GDA only contains gradient while the oracle calls of IMCN contains both gradient and Hessian-vector product. 
Based on Figure \ref{fig:domain}, we observe that IMCN significantly outperforms GDA in both time and oracle comparison. Notice that IMCN requires to call much more gradient/Hessian-vector oracles on y than the gradient oracles on $\vx$. These results verify our convergence analysis and show the advantage of proposed algorithm.

\section{Conclusions and Future Work} \label{sec:con}
In this paper, we study second-order optimization methods for nonconvex-strongly-concave minimax problems. 
We have proposed a novel algorithm so-called minimax cubic Newton (MCN) which could find an $(\eps,\kappa^{1.5}\sqrt{\rho\eps}\,)$-SSP of the primal function with $\fO(\kappa^{1.5}\sqrt{\rho}\eps^{-1.5})$ second-order oracle calls and $\tilde\fO(\kappa^{2}\sqrt{\rho}\eps^{-1.5})$ gradient oracle calls. 
We also provide an efficient algorithm for high dimensional problem, which avoids accessing second-order oracle and contains $\tilde\fO(\kappa^{1.5}\ell\eps^{-2})$ Hessian-vector oracle calls and $\tilde\fO(\kappa^{2}\sqrt{\rho}\eps^{-1.5})$ gradient oracle calls.
To best of our knowledge, this paper first achieves non-asymptotic convergence result for finding SSP of minimax problem without convex-concave assumption.

There are several interesting problems for future work:
(a) The proposed algorithms and analysis require the strongly convexity assumption on $\vy$. We would like to study how to find SSPs for general nonconvex-concave minimax problems.
(b) The upper complexity bounds of proposed algorithms look not optimal. 
It is possible to apply the acceleration techniques to establish more efficient algorithms for our task.
(c) The implementations of proposed IMCN still require accessing the Hessian-vector oracle. It is interesting to investigate how to find SSPs of our minimax problem by pure first-order algorithms. 
(d) This paper does not consider the specific structure of the objective function.
However, many machine learning models can be formulated as minimax problems where the objective functions have finite-sum or expectation form.
Designing efficient stochastic algorithms for such formulations is an interesting problem to the machine learning community.

\section*{Acknowledgements}
Luo Luo is supported by National Natural Science Foundation of China (No. 62206058) and Shanghai Sailing Program (22YF1402900). Cheng Chen is supported by Singapore Ministry of Education (AcRF) Tier 1 grant RG75/21.

\bibliography{reference}

\begin{thebibliography}{43}
\providecommand{\natexlab}[1]{#1}
\providecommand{\url}[1]{\texttt{#1}}
\expandafter\ifx\csname urlstyle\endcsname\relax
  \providecommand{\doi}[1]{doi: #1}\else
  \providecommand{\doi}{doi: \begingroup \urlstyle{rm}\Url}\fi

\bibitem[Agarwal et~al.(2017)Agarwal, Allen-Zhu, Bullins, Hazan, and
  Ma]{agarwal2017finding}
Naman Agarwal, Zeyuan Allen-Zhu, Brian Bullins, Elad Hazan, and Tengyu Ma.
\newblock Finding approximate local minima faster than gradient descent.
\newblock In \emph{STOC}, 2017.

\bibitem[Axelsson(1994)]{axelsson_1994}
Owe Axelsson.
\newblock \emph{Iterative Solution Methods}.
\newblock Cambridge University Press, 1994.

\bibitem[Bhojanapalli et~al.(2016)Bhojanapalli, Neyshabur, and
  Srebro]{bhojanapalli2016global}
Srinadh Bhojanapalli, Behnam Neyshabur, and Nati Srebro.
\newblock Global optimality of local search for low rank matrix recovery.
\newblock In \emph{Advances in Neural Information Processing Systems}, pages
  3873--3881, 2016.

\bibitem[Carmon and Duchi(2019)]{carmon2019gradient}
Yair Carmon and John Duchi.
\newblock Gradient descent finds the cubic-regularized nonconvex newton step.
\newblock \emph{SIAM Journal on Optimization}, 29\penalty0 (3):\penalty0
  2146--2178, 2019.

\bibitem[Cartis et~al.(2011{\natexlab{a}})Cartis, Gould, and
  Toint]{cartis2011aadaptive}
Coralia Cartis, Nicholas~I.M. Gould, and Philippe~L. Toint.
\newblock Adaptive cubic regularisation methods for unconstrained optimization.
  part {I}: motivation, convergence and numerical results.
\newblock \emph{Mathematical Programming}, 127\penalty0 (2):\penalty0 245--295,
  2011{\natexlab{a}}.

\bibitem[Cartis et~al.(2011{\natexlab{b}})Cartis, Gould, and
  Toint]{cartis2011badaptive}
Coralia Cartis, Nicholas~I.M. Gould, and Philippe~L. Toint.
\newblock Adaptive cubic regularisation methods for unconstrained optimization.
  part {II}: worst-case function-and derivative-evaluation complexity.
\newblock \emph{Mathematical programming}, 130\penalty0 (2):\penalty0 295--319,
  2011{\natexlab{b}}.

\bibitem[Chen et~al.(2021)Chen, Li, and Zhou]{chen2021escaping}
Ziyi Chen, Qunwei Li, and Yi~Zhou.
\newblock Escaping saddle points in nonconvex minimax optimization via
  cubic-regularized gradient descent-ascent.
\newblock \emph{arXiv preprint arXiv:2110.07098}, 2021.

\bibitem[Fiez et~al.(2021)Fiez, Ratliff, Mazumdar, Faulkner, and
  Narang]{fiez2021global}
Tanner Fiez, Lillian Ratliff, Eric Mazumdar, Evan Faulkner, and Adhyyan Narang.
\newblock Global convergence to local minmax equilibrium in classes of
  nonconvex zero-sum games.
\newblock In \emph{NeurIPS}, 2021.

\bibitem[Ganin et~al.(2016)Ganin, Ustinova, Ajakan, Germain, Larochelle,
  Laviolette, Marchand, and Lempitsky]{ganin2016domain}
Yaroslav Ganin, Evgeniya Ustinova, Hana Ajakan, Pascal Germain, Hugo
  Larochelle, Fran{\c{c}}ois Laviolette, Mario Marchand, and Victor Lempitsky.
\newblock Domain-adversarial training of neural networks.
\newblock \emph{Journal of Machine Learning Research}, 17\penalty0
  (1):\penalty0 2096--2030, 2016.

\bibitem[Ge et~al.(2015)Ge, Huang, Jin, and Yuan]{ge2015escaping}
Rong Ge, Furong Huang, Chi Jin, and Yang Yuan.
\newblock Escaping from saddle points-online stochastic gradient for tensor
  decomposition.
\newblock In \emph{COLT}, 2015.

\bibitem[Ge et~al.(2016)Ge, Lee, and Ma]{ge2016matrix}
Rong Ge, Jason~D. Lee, and Tengyu Ma.
\newblock Matrix completion has no spurious local minimum.
\newblock In \emph{NIPS}, 2016.

\bibitem[Guo et~al.(2020)Guo, Yuan, Yan, and Yang]{guo2020fast}
Zhishuai Guo, Zhuoning Yuan, Yan Yan, and Tianbao Yang.
\newblock Fast objective \& duality gap convergence for
  nonconvex-strongly-concave min-max problems.
\newblock \emph{arXiv preprint arXiv:2006.06889}, 2020.

\bibitem[Han et~al.(2021)Han, Xie, and Zhang]{han2021lower}
Yuze Han, Guangzeng Xie, and Zhihua Zhang.
\newblock Lower complexity bounds of finite-sum optimization problems: The
  results and construction.
\newblock \emph{arXiv preprint arXiv:2103.08280}, 2021.

\bibitem[Hanzely et~al.(2020)Hanzely, Doikov, Nesterov, and
  Richtarik]{hanzely2020stochastic}
Filip Hanzely, Nikita Doikov, Yurii Nesterov, and Peter Richtarik.
\newblock Stochastic subspace cubic newton method.
\newblock In \emph{ICML}, 2020.

\bibitem[Huang et~al.(2020)Huang, Gao, Pei, and Huang]{huang2020accelerated}
Feihu Huang, Shangqian Gao, Jian Pei, and Heng Huang.
\newblock Accelerated zeroth-order momentum methods from mini to minimax
  optimization.
\newblock \emph{arXiv preprint arXiv:2008.08170}, 2020.

\bibitem[Jin et~al.(2017)Jin, Ge, Netrapalli, Kakade, and
  Jordan]{jin2017escape}
Chi Jin, Rong Ge, Praneeth Netrapalli, Sham~M Kakade, and Michael~I Jordan.
\newblock How to escape saddle points efficiently.
\newblock In \emph{International Conference on Machine Learning}, pages
  1724--1732. PMLR, 2017.

\bibitem[Jin et~al.(2020)Jin, Netrapalli, and Jordan]{jin2019local}
Chi Jin, Praneeth Netrapalli, and Michael~I. Jordan.
\newblock What is local optimality in nonconvex-nonconcave minimax
  optimization?
\newblock In \emph{ICML}, 2020.

\bibitem[Kohler and Lucchi(2017)]{kohler2017sub}
Jonas~Moritz Kohler and Aurelien Lucchi.
\newblock Sub-sampled cubic regularization for non-convex optimization.
\newblock In \emph{ICML}, 2017.

\bibitem[LeCun et~al.(1998)LeCun, Bottou, Bengio, and
  Haffner]{lecun1998gradient}
Yann LeCun, L{\'e}on Bottou, Yoshua Bengio, and Patrick Haffner.
\newblock Gradient-based learning applied to document recognition.
\newblock \emph{Proceedings of the IEEE}, 86\penalty0 (11):\penalty0
  2278--2324, 1998.

\bibitem[Lin et~al.(2020{\natexlab{a}})Lin, Jin, and Jordan]{lin2019gradient}
Tianyi Lin, Chi Jin, and Michael~I. Jordan.
\newblock On gradient descent ascent for nonconvex-concave minimax problems.
\newblock In \emph{ICML}, 2020{\natexlab{a}}.

\bibitem[Lin et~al.(2020{\natexlab{b}})Lin, Jin, and Jordan]{lin2020near}
Tianyi Lin, Chi Jin, and Michael~I. Jordan.
\newblock Near-optimal algorithms for minimax optimization.
\newblock In \emph{COLT}, 2020{\natexlab{b}}.

\bibitem[Luo et~al.(2020)Luo, Ye, Huang, and Zhang]{luo2020stochastic}
Luo Luo, Haishan Ye, Zhichao Huang, and Tong Zhang.
\newblock Stochastic recursive gradient descent ascent for stochastic
  nonconvex-strongly-concave minimax problems.
\newblock In \emph{NeurIPS}, 2020.

\bibitem[Luo et~al.(2021)Luo, Xie, Zhang, and Zhang]{luo2021near}
Luo Luo, Guangzeng Xie, Tong Zhang, and Zhihua Zhang.
\newblock Near optimal stochastic algorithms for finite-sum unbalanced
  convex-concave minimax optimization.
\newblock \emph{arXiv preprint arXiv:2106.01761}, 2021.

\bibitem[Mazumdar et~al.(2019)Mazumdar, Jordan, and
  Sastry]{mazumdar2019finding}
Eric~V Mazumdar, Michael~I Jordan, and S~Shankar Sastry.
\newblock On finding local nash equilibria (and only local nash equilibria) in
  zero-sum games.
\newblock \emph{arXiv preprint arXiv:1901.00838}, 2019.

\bibitem[Nesterov(2018)]{nesterov2018lectures}
Yurii Nesterov.
\newblock \emph{Lectures on convex optimization}, volume 137.
\newblock Springer, 2018.

\bibitem[Nesterov and Polyak(2006)]{nesterov2006cubic}
Yurii Nesterov and Boris~T. Polyak.
\newblock Cubic regularization of newton method and its global performance.
\newblock \emph{Mathematical Programming}, 108\penalty0 (1):\penalty0 177--205,
  2006.

\bibitem[Nesterov(1983)]{nesterov1983method}
Yurii~E Nesterov.
\newblock A method for solving the convex programming problem with convergence
  rate $o(1/k^2)$.
\newblock In \emph{Dokl. akad. nauk Sssr}, volume 269, pages 543--547, 1983.

\bibitem[Pearlmutter(1994)]{pearlmutter1994fast}
Barak~A. Pearlmutter.
\newblock Fast exact multiplication by the {H}essian.
\newblock \emph{Neural computation}, 6\penalty0 (1):\penalty0 147--160, 1994.

\bibitem[Qiu et~al.(2020)Qiu, Yang, Wei, Ye, and Wang]{qiu2020single}
Shuang Qiu, Zhuoran Yang, Xiaohan Wei, Jieping Ye, and Zhaoran Wang.
\newblock Single-timescale stochastic nonconvex-concave optimization for smooth
  nonlinear {TD} learning.
\newblock \emph{arXiv preprint arXiv:2008.10103}, 2020.

\bibitem[Sanjabi et~al.(2018)Sanjabi, Ba, Razaviyayn, and
  Lee]{sanjabi2018convergence}
Maziar Sanjabi, Jimmy Ba, Meisam Razaviyayn, and Jason~D. Lee.
\newblock On the convergence and robustness of training {GAN}s with regularized
  optimal transport.
\newblock \emph{arXiv preprint arXiv:1802.08249}, 2018.

\bibitem[Schraudolph(2002)]{Schraudolph2002Fast}
Nicol~N. Schraudolph.
\newblock Fast curvature matrix-vector products for second-order gradient
  descent.
\newblock \emph{Neural Computation}, 14\penalty0 (7):\penalty0 1723, 2002.

\bibitem[Shapiro(1985)]{shapiro1985second}
Alexander Shapiro.
\newblock Second-order derivatives of extremal-value functions and optimality
  conditions for semi-infinite programs.
\newblock \emph{Mathematics of Operations Research}, 10\penalty0 (2):\penalty0
  207--219, 1985.

\bibitem[Sinha et~al.(2018)Sinha, Namkoong, and Duchi]{sinha2017certifying}
Aman Sinha, Hongseok Namkoong, and John Duchi.
\newblock Certifying some distributional robustness with principled adversarial
  training.
\newblock In \emph{ICLR}, 2018.

\bibitem[Sun et~al.(2016)Sun, Qu, and Wright]{sun2016complete}
Ju~Sun, Qing Qu, and John Wright.
\newblock Complete dictionary recovery over the sphere {I}: Overview and the
  geometric picture.
\newblock \emph{IEEE Transactions on Information Theory}, 63\penalty0
  (2):\penalty0 853--884, 2016.

\bibitem[Sun et~al.(2018)Sun, Qu, and Wright]{sun2018geometric}
Ju~Sun, Qing Qu, and John Wright.
\newblock A geometric analysis of phase retrieval.
\newblock \emph{Foundations of Computational Mathematics}, 18\penalty0
  (5):\penalty0 1131--1198, 2018.

\bibitem[Tripuraneni et~al.(2018)Tripuraneni, Stern, Jin, Regier, and
  Jordan]{tripuraneni2017stochastic}
Nilesh Tripuraneni, Mitchell Stern, Chi Jin, Jeffrey Regier, and Michael~I.
  Jordan.
\newblock Stochastic cubic regularization for fast nonconvex optimization.
\newblock In \emph{NeurIPS}, 2018.

\bibitem[Wang and Li(2020)]{wang2020improved}
Yuanhao Wang and Jian Li.
\newblock Improved algorithms for convex-concave minimax optimization.
\newblock In \emph{NeurIPS}, 2020.

\bibitem[Wang et~al.(2019)Wang, Zhang, and Ba]{wang2019solving}
Yuanhao Wang, Guodong Zhang, and Jimmy Ba.
\newblock On solving minimax optimization locally: A follow-the-ridge approach.
\newblock In \emph{ICLR}, 2019.

\bibitem[Xian et~al.(2021)Xian, Huang, Zhang, and Huang]{xian2021faster}
Wenhan Xian, Feihu Huang, Yanfu Zhang, and Heng Huang.
\newblock A faster decentralized algorithm for nonconvex minimax problems.
\newblock \emph{NeurIPS}, 2021.

\bibitem[Xu et~al.(2020)Xu, Wang, Liang, and Poor]{xu2020gradient}
Tengyu Xu, Zhe Wang, Yingbin Liang, and H~Vincent Poor.
\newblock Gradient free minimax optimization: Variance reduction and faster
  convergence.
\newblock \emph{arXiv preprint arXiv:2006.09361}, 2020.

\bibitem[Zhang et~al.(2020)Zhang, Wu, Poupart, and Yu]{zhang2020newton}
Guojun Zhang, Kaiwen Wu, Pascal Poupart, and Yaoliang Yu.
\newblock Newton-type methods for minimax optimization.
\newblock \emph{arXiv preprint arXiv:2006.14592}, 2020.

\bibitem[Zhang et~al.(2021)Zhang, Yang, Guzm{\'a}n, Kiyavash, and
  He]{zhang2021complexity}
Siqi Zhang, Junchi Yang, Crist{\'o}bal Guzm{\'a}n, Negar Kiyavash, and Niao He.
\newblock The complexity of nonconvex-strongly-concave minimax optimization.
\newblock \emph{arXiv preprint arXiv:2103.15888}, 2021.

\bibitem[Zhou et~al.(2019)Zhou, Xu, and Gu]{zhou2019stochastic}
Dongruo Zhou, Pan Xu, and Quanquan Gu.
\newblock Stochastic variance-reduced cubic regularization methods.
\newblock \emph{Journal of Machine Learning Research}, 20\penalty0
  (134):\penalty0 1--47, 2019.

\end{thebibliography}
\bibliographystyle{plainnat}

\begin{enumerate}

\item For all authors...
\begin{enumerate}
  \item Do the main claims made in the abstract and introduction accurately reflect the paper's contributions and scope?
    \answerYes{}
  \item Did you describe the limitations of your work?
    \answerYes{}
  \item Did you discuss any potential negative societal impacts of your work?
    \answerNA{}
  \item Have you read the ethics review guidelines and ensured that your paper conforms to them?
    \answerYes{}
\end{enumerate}

\item If you are including theoretical results...
\begin{enumerate}
  \item Did you state the full set of assumptions of all theoretical results?
    \answerYes{}
        \item Did you include complete proofs of all theoretical results?
    \answerYes{}
\end{enumerate}

\item If you ran experiments...
\begin{enumerate}
  \item Did you include the code, data, and instructions needed to reproduce the main experimental results (either in the supplemental material or as a URL)?
    \answerYes{}
  \item Did you specify all the training details (e.g., data splits, hyperparameters, how they were chosen)?
    \answerYes{}
        \item Did you report error bars (e.g., with respect to the random seed after running experiments multiple times)?
    \answerYes{}
        \item Did you include the total amount of compute and the type of resources used (e.g., type of GPUs, internal cluster, or cloud provider)?
    \answerYes{}
\end{enumerate}

\item If you are using existing assets (e.g., code, data, models) or curating/releasing new assets...
\begin{enumerate}
  \item If your work uses existing assets, did you cite the creators?
    \answerYes{}
  \item Did you mention the license of the assets?
    \answerNA{}
  \item Did you include any new assets either in the supplemental material or as a URL?
    \answerNA{}
  \item Did you discuss whether and how consent was obtained from people whose data you're using/curating?
    \answerNA{}
  \item Did you discuss whether the data you are using/curating contains personally identifiable information or offensive content?
    \answerNA{}
\end{enumerate}

\item If you used crowdsourcing or conducted research with human subjects...
\begin{enumerate}
  \item Did you include the full text of instructions given to participants and screenshots, if applicable?
    \answerNA{}
  \item Did you describe any potential participant risks, with links to Institutional Review Board (IRB) approvals, if applicable?
    \answerNA{}
  \item Did you include the estimated hourly wage paid to participants and the total amount spent on participant compensation?
    \answerNA{}
\end{enumerate}

\end{enumerate}


\newpage
\appendix

The appendix is organized as follows. We first present some basic lemmas used in our proofs in Section~\ref{apsec:basic}. Then we provide the proofs for Section~\ref{sec:pre}, Section~\ref{section:MCN} and Section~\ref{sec:imcn} in Section~\ref{apsec:pre}, Section~\ref{apsec:mcn} and Section~\ref{apsec:imcn}, respectively. We compare the complexity of MCN and IMCN with first-order methods in Section~\ref{apsec:com}. We introduce the implementation details of IMCN in Section~\ref{apsec:imple} and provide the details of experiments in Section~\ref{apsec:exp}.

\section{Basic Lemmas} \label{apsec:basic}
In this section, we provide some basic lemmas.
\begin{lem}\label{lem:lip-another}
Under assumptions of Lemma~\ref{lem:P-Hessian}, then
$\nabla_{xy}^2 f(\vx, \cdot) \left(\nabla_{yy}^2 f(\vx, \cdot)\right)^{-1} \nabla_{yx}^2 f(\vx, \cdot)$ is $3\kappa^2\rho$-Lipschitz continuous for fixed $\vx$.
\end{lem}
\begin{proof}
For fixed $x\in\BR^{d_x}$ and any $y,y'\in\BR^{d_y}$, we have
\begin{align*}
& \norm{\left(\nabla^2_{yy}f(\vx,\vy)\right)^{-1}-\left(\nabla^2_{yy}f(\vx,\vy')\right)^{-1}} \\
=& \norm{\left(\nabla^2_{yy}f(\vx,\vy)\right)^{-1}\left(\nabla^2_{yy}f(\vx,\vy)-\nabla^2_{yy}f(\vx,\vy')\right)\left(\nabla^2_{yy}f(\vx,\vy')\right)^{-1}} \\
\leq & \frac{1}{\mu^2}\norm{\nabla^2_{yy}f(\vx,\vy)-\nabla^2_{yy}f(\vx,\vy')} 
\leq  \frac{\rho}{\mu^2} 
\end{align*}
and
\begin{align*}
\norm{\left(\nabla^2_{yy}f(\vx,\vy)\right)^{-1}} \leq \frac{1}{\mu}.
\end{align*}
Similar to the analysis of Lemma~\ref{lem:Ph-smooth}, 
we apply Lemma~\ref{lem:lip} on $\left(\nabla^2_{yy}f(\vx,\cdot\right)^{-1}$ and $\nabla^2_{yx}f(\vx,\cdot)$ which means
$\left(\nabla_{yy}^2 f(\vx,\cdot)\right)^{-1}\nabla^2_{yx}f(\vx,\cdot)$ is $(\kappa+1)\rho/\mu$-Lipschitz continuous and its norm is bounded by $\kappa$. Then again applying Lemma~\ref{lem:lip} on
$\nabla^2_{xy}f(\vx,\cdot)$ and
$\left(\nabla_{yy}^2f(\vx,\cdot)\right)^{-1}\nabla^2_{yx}f(\vx,\cdot)$ means the Lipschitz continuous coefficient of $\nabla_{xy}^2 f(\vx, \cdot) \left(\nabla_{yy}^2 f(\vx, \cdot)\right)^{-1} \nabla_{yx}^2 f(\vx, \cdot)$ is $(\kappa^2+2\kappa)\rho\leq 3\kappa^2\rho$.
\end{proof}

Then we introduce two classical lemmas for cubic Newton methods.
\begin{lem}[{\cite{nesterov2006cubic}}]\label{lem:h-smooth}
Under assumptions of Lemma \ref{lem:Ph-smooth}, we have
\begin{align}
\norm{\nabla P(\vx') - \nabla P(\vx) - \nabla^2 P(\vx)(\vx'-\vx)} \leq  \frac{M}{2}\norm{\vx-\vx'}^2 \label{ieq:h-smooth1}
\end{align}
and
\begin{align}
\left|P(\vx') - P(\vx) - \nabla P(\vx)^\top(\vx'-\vx) - \frac{1}{2}(\vx'-\vx)^\top\nabla^2 P(\vx) (\vx'-\vx)\right| \leq  \frac{M}{6}\norm{\vx-\vx'}^3 \label{ieq:h-smooth2}.
\end{align}
\end{lem}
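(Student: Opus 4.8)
The plan is to derive both inequalities directly from the integral (Newton--Leibniz) form of Taylor's theorem, using only the fact, supplied by Lemma~\ref{lem:Ph-smooth}, that $\nabla^2 P$ is $M$-Lipschitz with $M=4\sqrt{2}\kappa^3\rho$. By Lemmas~\ref{lem:P-smooth} and~\ref{lem:P-Hessian} the function $P$ is twice differentiable with the stated gradient and Hessian, so all the integral remainder formulas below are legitimate. No idea beyond Nesterov--Polyak's original argument is needed; the only thing requiring care is the bookkeeping in the integrals.

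For \eqref{ieq:h-smooth1}, write
\[
\nabla P(\vx') - \nabla P(\vx) = \int_0^1 \nabla^2 P\big(\vx + t(\vx'-\vx)\big)(\vx'-\vx)\,dt
\]
and subtract $\nabla^2 P(\vx)(\vx'-\vx) = \int_0^1 \nabla^2 P(\vx)(\vx'-\vx)\,dt$. Taking norms and moving them inside the integral gives
\[
\norm{\nabla P(\vx') - \nabla P(\vx) - \nabla^2 P(\vx)(\vx'-\vx)} \le \int_0^1 \norm{\nabla^2 P\big(\vx + t(\vx'-\vx)\big) - \nabla^2 P(\vx)}\,\norm{\vx'-\vx}\,dt.
\]
By Lemma~\ref{lem:Ph-smooth} the integrand is at most $M t\,\norm{\vx'-\vx}^2$, and $\int_0^1 M t\,dt = M/2$, which is \eqref{ieq:h-smooth1}.

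For \eqref{ieq:h-smooth2}, start from $P(\vx') - P(\vx) = \int_0^1 \nabla P\big(\vx + t(\vx'-\vx)\big)^\top (\vx'-\vx)\,dt$ and subtract the quantity $\nabla P(\vx)^\top(\vx'-\vx) + \tfrac12 (\vx'-\vx)^\top \nabla^2 P(\vx)(\vx'-\vx)$, which can be written as $\int_0^1 \big(\nabla P(\vx) + t\,\nabla^2 P(\vx)(\vx'-\vx)\big)^\top (\vx'-\vx)\,dt$. The left-hand side of \eqref{ieq:h-smooth2} is therefore bounded by $\int_0^1 \norm{\nabla P\big(\vx + t(\vx'-\vx)\big) - \nabla P(\vx) - t\,\nabla^2 P(\vx)(\vx'-\vx)}\,\norm{\vx'-\vx}\,dt$. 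Applying the estimate \eqref{ieq:h-smooth1} already proved, with $\vx'$ replaced by the intermediate point $\vx + t(\vx'-\vx)$, bounds the norm inside by $\tfrac{M}{2}\,\norm{t(\vx'-\vx)}^2 = \tfrac{M}{2}\,t^2\norm{\vx'-\vx}^2$, and $\int_0^1 \tfrac{M}{2} t^2\,dt = M/6$ finishes the proof. The one mild subtlety --- really just care rather than difficulty --- is recognizing that reusing \eqref{ieq:h-smooth1} at the point $\vx+t(\vx'-\vx)$ produces the extra factor $t^2$, and that this is exactly what turns the constant $1/2$ into $1/6$ after integration; accordingly I anticipate no genuine obstacle, since the statement is an immediate consequence of Lemma~\ref{lem:Ph-smooth} plus routine Taylor-with-integral-remainder manipulations.
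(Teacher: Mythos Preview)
Your proposal is correct; it is precisely the standard Nesterov--Polyak integral-remainder argument from \cite{nesterov2006cubic}, which is all the paper invokes (the lemma is stated with a citation and no proof of its own). There is nothing to add or compare.
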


\begin{lem}[{\cite{nesterov2006cubic}}]\label{lem:cubic-exact-solution}
For any $M'>0$, the solution $\vs^*$ of the following cubic regularized quadratic problem  
\begin{align*}
\vs^* = \argmin_{\vx\in\BR^{d_x}} \left(\vg^\top \vs + \frac{1}{2}\vs^\top \mH \vs + \frac{M'}{6}\norm{\vs}^3\right)
\end{align*}
satisfies
\begin{align}
\vg + \mH \vs^* + \frac{M'}{2}\norm{\vs^*}\vs^* = & \vzero, \label{ieq:cubic-sol-1}, \\
\mH + \frac{M'}{2}\norm{\vs^*}\mI \succeq & \vzero \label{ieq:cubic-sol-2}, \\
\vg^\top \vs^* + \dfrac{1}{2}(\vs^*)^\top \mH \vs^* + \dfrac{M'}{6}\norm{\vs^*}^3 \leq &  -\frac{M'}{12}\norm{\vs^*}^3 \label{ieq:cubic-sol-3}.
\end{align}
\end{lem}

\section{Proofs for Section \ref{sec:pre}} \label{apsec:pre}
In this section, we provide the proofs for lemmas in Section \ref{sec:pre}.

\subsection{The Proof of Lemma \ref{lem:P-Hessian}}
\begin{proof}
The implicit function theorem means $\vy^*(\cdot)$ is differentiable.
The optimality of $\vy^*$ means 
\begin{align*}
\nabla_y f(\vx, \vy^*(\vx)) = \vzero.
\end{align*}
Taking total derivative on above equation achieves
\begin{align}\label{eq:P-Hessian-0}
\nabla^2_{yx} f(\vx, \vy^*(\vx)) + \nabla^2_{yy} f(\vx, \vy^*(\vx))\nabla \vy^*(\vx)=\vzero.
\end{align}
Taking total derivative on the result of Lemma~\ref{lem:P-smooth}, we have
\begin{align}\label{eq:P-Hessian-1}
\nabla^2 P(\vx) = \nabla_{xx}^2 f(\vx,\vy^*(\vx)) + \nabla_{xy}^2 f(\vx,\vy^*(\vx))\nabla \vy^*(\vx).
\end{align}
The strongly concavity on $\vy$ implies $\nabla^2_{yy} f(\vx,\vy)$ is non-singular. Hence, we connect (\ref{eq:P-Hessian-0}) and (\ref{eq:P-Hessian-1}) to obtain the desired result.
\end{proof}

\subsection{The Proof of Lemma~\ref{lem:Ph-smooth}}

We first introduce the following lemma.
\begin{lem}\label{lem:lip}
Suppose operators $\fH_1:\fD\to\fF$ and $\fH_2:\fD\to\fF$ are $\alpha_1$-Lipschitz continuous and $\alpha_2$-Lipschitz continuous with respect to norms $\Norm{\cdot}_\fD$ and $\Norm{\cdot}_\fF$ defined on $\fD$ and $\fF$, that is, for any $z$ and $z'$ in $\fD$, we have
\begin{align*}
\Norm{\fH_1(\mZ) - \fH_1(\mZ')}_\fF \leq \alpha_1\Norm{\mZ-\mZ'}_\fD \quad\text{and}\quad
\Norm{\fH_2(\mZ) - \fH_2(\mZ')}_\fF \leq \alpha_2\Norm{\mZ-\mZ'}_\fD.
\end{align*}
We further suppose that there exist some $\beta_1, \beta_2\geq0$ such that $\Norm{\fH_1(\mZ)}_\fF\leq \beta_1$ and $\Norm{\fH_2(\mZ)}_\fF\leq \beta_2$ for any $z$ in $\fD$; and norm $\Norm{\cdot}_{\fF}$ is submultiplicative on $\fF$, that is, for any $\mZ$ and $\mZ'$ in $\fF$, we have $\Norm{\mZ\mZ'}_{\fF}\leq\Norm{\mZ}_{\fF}\Norm{\mZ'}_{\fF}$ if $\mZ\mZ'$ also in $\fF$. Then, the operator $\fH_1(\cdot)\fH_2(\cdot)$ is $(\alpha_1\beta_2+\alpha_2\beta_1)$-Lipschitz continuous, that is, for any $\mZ,\mZ'\in\fD$, we have
\begin{align*}
 \Norm{\fH_1(\mZ)\fH_2(\mZ) - \fH_1(\mZ')\fH_2(\mZ')}_\fF 
\leq  (\alpha_1\beta_2+\alpha_2\beta_1)\Norm{\mZ-\mZ'}_\fD.
\end{align*}
\end{lem}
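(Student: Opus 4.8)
The plan is to apply the elementary ``add and subtract'' trick: for arbitrary $\mZ,\mZ'\in\fD$, insert $\pm\,\fH_1(\mZ)\fH_2(\mZ')$ into the difference and split it as
\begin{align*}
\fH_1(\mZ)\fH_2(\mZ) - \fH_1(\mZ')\fH_2(\mZ')
= \fH_1(\mZ)\big(\fH_2(\mZ) - \fH_2(\mZ')\big) + \big(\fH_1(\mZ) - \fH_1(\mZ')\big)\fH_2(\mZ').
\end{align*}
Then I would take $\Norm{\cdot}_\fF$ of both sides, apply the triangle inequality to the two summands, and use submultiplicativity of $\Norm{\cdot}_\fF$ on each product, obtaining
\begin{align*}
\Norm{\fH_1(\mZ)\fH_2(\mZ) - \fH_1(\mZ')\fH_2(\mZ')}_\fF
\leq \Norm{\fH_1(\mZ)}_\fF\,\Norm{\fH_2(\mZ) - \fH_2(\mZ')}_\fF
+ \Norm{\fH_1(\mZ) - \fH_1(\mZ')}_\fF\,\Norm{\fH_2(\mZ')}_\fF.
\end{align*}

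The final step is purely substitution of the hypotheses: the uniform bounds $\Norm{\fH_1(\mZ)}_\fF\leq\beta_1$ and $\Norm{\fH_2(\mZ')}_\fF\leq\beta_2$, together with the Lipschitz estimates $\Norm{\fH_2(\mZ) - \fH_2(\mZ')}_\fF\leq\alpha_2\Norm{\mZ-\mZ'}_\fD$ and $\Norm{\fH_1(\mZ) - \fH_1(\mZ')}_\fF\leq\alpha_1\Norm{\mZ-\mZ'}_\fD$. This yields the bound $(\beta_1\alpha_2 + \alpha_1\beta_2)\Norm{\mZ-\mZ'}_\fD$, which is exactly the claim since the order of the terms is immaterial; as $\mZ,\mZ'$ were arbitrary, the product operator is $(\alpha_1\beta_2 + \alpha_2\beta_1)$-Lipschitz.

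There is essentially no obstacle here; the only point needing a word of care is that the matrix products appearing in the decomposition — $\fH_1(\mZ)\fH_2(\mZ')$, $\fH_1(\mZ)(\fH_2(\mZ) - \fH_2(\mZ'))$ and $(\fH_1(\mZ) - \fH_1(\mZ'))\fH_2(\mZ')$ — are all well-defined elements on which the submultiplicativity hypothesis for $\Norm{\cdot}_\fF$ applies. In the intended applications $\fF$ consists of (possibly rectangular) matrices equipped with the spectral norm and the shapes are compatible, so this holds automatically. The lemma is stated in this abstract form only so that the same one-line argument can be reused for the several operator products arising when differentiating $\nabla^2 P(\vx) = \nabla^2_{xx} f - \nabla^2_{xy} f\,(\nabla^2_{yy} f)^{-1}\,\nabla^2_{yx} f$ in the proof of Lemma~\ref{lem:Ph-smooth}.
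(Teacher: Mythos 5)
Your proof is correct and is exactly the paper's argument: insert $\pm\,\fH_1(\mZ)\fH_2(\mZ')$, apply the triangle inequality, then submultiplicativity, then the uniform bounds and Lipschitz estimates. Nothing to add.
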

\begin{proof}
For any $\mZ,\mZ'\in\fD$, we have
\begin{align*}
 & \Norm{\fH_1(\mZ)\fH_2(\mZ) - \fH_1(\mZ')\fH_2(\mZ')}_\fF \\
\leq & \Norm{\fH_1(\mZ)\fH_2(\mZ) - \fH_1(\mZ)\fH_2(\mZ')}_\fF + \Norm{\fH_1(\mZ)\fH_2(\mZ') - \fH_1(\mZ')\fH_2(\mZ')}_\fF \\
\leq & \Norm{\fH_1(\mZ)}_\fF \Norm{\fH_2(\mZ) - \fH_2(\mZ')}_\fF + \Norm{\fH_2(\mZ')}_\fF \Norm{\fH_1(\mZ) - \fH_1(\mZ')}_\fF \\
\leq & (\alpha_1\beta_2+\alpha_2\beta_1)\Norm{\mZ-\mZ'}_\fD.
\end{align*}
\end{proof}

\noindent Then we prove Lemma~\ref{lem:Ph-smooth} as follows.
\begin{proof}
Using Lemma~\ref{lem:P-Hessian} and triangle inequality, we have
{\begin{align}\label{ieq:P-lc-1}
\begin{split}
& \norm{\nabla^2 P(\vx) - \nabla^2 P(\vx')} \\
\leq &\norm{\nabla^2_{xx} f(\vx,\vy^*(\vx)) - \nabla^2_{xx} f(\vx',\vy^*(\vx'))} + \\
& \big\|\nabla^2_{xy} f(\vx,\vy^*(\vx))(\nabla^2_{yy} f(\vx,\vy^*(\vx)))^{-1}\nabla^2_{yx} f(\vx,\vy^*(\vx)) \\
&~~~ -\nabla^2_{xy} f(\vx',\vy^*(\vx')) (\nabla^2_{yy} f(\vx',\vy^*(\vx')))^{-1}\nabla^2_{yx} f(\vx',\vy^*(\vx'))\big\|. 
\end{split}
\end{align}}
Assumption~\ref{asm:h-smooth} means $\nabla_{xx}^2 f(\cdot,\cdot)$ is $\rho$-Lipschitz continuous, which implies for any $x$ and $x'$, we have
\begin{align*}
 & \norm{\nabla^2_{xx} f(\vx,\vy^*(\vx)) - \nabla^2_{xx} f(\vx',\vy^*(\vx'))}^2 \\
\leq & \rho^2\left(\norm{\vx-\vx'}^2+\norm{\vy^*(\vx) - \vy^*(\vx')}^2\right) \\
\leq & \rho^2\left(\norm{\vx-\vx'}^2+\kappa^2\norm{\vx-\vx'}^2\right) \\
\leq & 2\kappa^2\rho^2\norm{\vx-\vx'}^2,
\end{align*}
where the second inequality use Lemma~\ref{lem:P-smooth} that $\vy^*(\cdot)$ is $\kappa$-Lipschitz continuous.
In other words, the partial Hessian $\nabla^2_{xx}f(\cdot,\vy^*(\cdot))$ is $\sqrt{2}\kappa\rho$-Lipschitz continuous. Similarly, the partial Hessians $\nabla^2_{xy}f(\cdot,\vy^*(\cdot))$, $\nabla^2_{yx}f(\cdot,\vy^*(\cdot))$ and $\nabla^2_{yy}f(\cdot,\vy^*(\cdot))$ are also $\sqrt{2}\kappa\rho$-Lipschitz continuous.

Then we verify the Lipschitz continuity of  $\left(\nabla^2_{yy}f(\cdot,\vy^*(\cdot))\right)^{-1}$. 
For any $x$ and $x'$, we have
\begin{align*}
& \norm{\left(\nabla^2_{yy}f(\vx,\vy^*(\vx))\right)^{-1}-\left(\nabla^2_{yy}f(\vx',\vy^*(\vx'))\right)^{-1}} \\
=& \norm{\left(\nabla^2_{yy}f(\vx,\vy^*(\vx))\right)^{-1}\left(\nabla^2_{yy}f(\vx,\vy^*(\vx))-\nabla^2_{yy}f(\vx',\vy^*(\vx'))\right)\left(\nabla^2_{yy}f(\vx,\vy^*(\vx'))\right)^{-1}} \\
\leq & \frac{1}{\mu^2}\norm{\nabla^2_{yy}f(\vx,\vy^*(\vx))-\nabla^2_{yy}f(\vx',\vy^*(\vx')))} 
\leq  \frac{\sqrt{2}\kappa\rho}{\mu^2}\norm{\vx-\vx'}, 
\end{align*}
where we use the strongly-concavity of $f$ which implies $(\nabla_{yy}^2 f(\vx,\vy^*(\vx)))^{-1}\preceq \mu^{-1}\mI$, then $\left(\nabla^2_{yy}f(\cdot,\vy^*(\cdot))\right)^{-1}$ is $(\sqrt{2}\kappa\rho/\mu^2)$-Lipschitz continuous.

We use Lemma~\ref{lem:lip} with 
\begin{align*}
& \fH_1(\cdot) = \left(\nabla_{yy}^2f(\cdot,\vy^*(\cdot))\right)^{-1},~~~\fH_2(\cdot) = \nabla_{yx}^2 f(\cdot,\vy^*(\cdot)), \\
& \alpha_1 = \frac{\sqrt{2}\kappa\rho}{\mu^2},~~~~~\beta_1=\frac{1}{\mu},~~~~~\alpha_2 = \sqrt{2}\kappa\rho,~~~~~\beta_2=\ell,
\end{align*}
which means $\left(\nabla_{yy}^2 f(\cdot,\vy^*(\cdot))\right)^{-1}\nabla_{yx}^2 f(\cdot,\vy^*(\cdot))$ is $(2\sqrt{2}\kappa^2\rho/\mu)$-Lipschitz continuous, since 
\begin{align*}
\alpha_1\beta_2+\alpha_2\beta_1=\frac{\sqrt{2}\kappa^2\rho}{\mu}+\frac{\sqrt{2}\kappa\rho}{\mu} \leq \frac{2\sqrt{2}\kappa^2\rho}{\mu}.
\end{align*}
We also have $\norm{\left(\nabla_{yy}^2 f(\cdot,\vy^*(\cdot))\right)^{-1}\nabla_{yx}^2 f(\cdot,\vy^*(\cdot))}\leq \kappa$. 
We use Lemma~\ref{lem:lip} again with
\begin{align*}
& \fH_1(\cdot) = \nabla_{xy}^2 f(\cdot,\vy^*(\cdot)),~~~~\fH_2(\cdot) = \left(\nabla_{yy}^2f(\cdot,\vy^*(\cdot))\right)^{-1}\nabla_{yx}^2 f(\cdot,\vy^*(\cdot)) \\
& \alpha_1 = \sqrt{2}\kappa\rho,~~~~~~\beta_1=\ell,~~~~~~\alpha_2 = \frac{2\sqrt{2}\kappa^2\rho}{\mu},~~~~~~\beta_2=\kappa,
\end{align*}
then we obtain
\begin{align*}
\alpha_1\beta_2+\alpha_2\beta_1=\sqrt{2}\kappa^2\rho+2\sqrt{2}\kappa^3\rho\leq 3\sqrt{2}\kappa^3\rho    
\end{align*}
and
$\nabla_{xy}^2 f(\cdot,\vy^*(\cdot))\left(\nabla_{yy}^2 f(\cdot,\vy^*(\cdot))\right)^{-1}\nabla_{yx}^2 f(\cdot,\vy^*(\cdot))$ is $3\sqrt{2}\kappa^3\rho$-Lipschitz continuous.
Finally, we have
{\begin{align*}
& \norm{\nabla^2 P(\vx) - \nabla^2 P(\vx')} \\
\leq & \norm{\nabla^2_{xx} f(\vx,\vy^*(\vx)) - \nabla^2_{xx}f(\vx',\vy^*(\vx'))} \\
& + \big\|\nabla^2_{xy} f(\vx,\vy^*(\vx))(\nabla^2_{yy}  f(\vx,\vy^*(\vx)))^{-1}\nabla^2_{yx} f(\vx,\vy^*(\vx)) \\
&~~~~~~ -\nabla^2_{xy} f(\vx',\vy^*(\vx')) (\nabla^2_{yy} f(\vx',\vy^*(\vx')))^{-1}\nabla^2_{yx} f(\vx',\vy^*(\vx'))\big\| \\
\leq & \sqrt{2}\kappa\rho\norm{\vx-\vx'} + 3\sqrt{2}\kappa^3\rho\norm{\vx-\vx'} \\
= & 4\sqrt{2}\kappa^3\rho\norm{\vx-\vx'}.
\end{align*}}
\end{proof}

\section{Proofs for Section \ref{section:MCN}} \label{apsec:mcn}
In this section, we provide the proofs for theorems and lemmas in Section \ref{section:MCN}.

\subsection{Proof of Lemma \ref{lem:norm-s}}

\begin{proof}
Lemma \ref{lem:cubic-exact-solution} means
\begin{align*}
  &  \norm{\nabla P(\vx_{t+1})} \\
= &  \norm{\nabla P(\vx_{t+1})- \vg_t - \mH_t \vs_t - \frac{M}{2}\norm{\vs_t}\vs_t} \\
\leq &  \norm{\nabla P(\vx_{t+1}) - \nabla P(\vx_t) - \nabla^2 P(\vx_t)\vs_t} + \norm{\nabla P(\vx_t) -  \vg_t} + \norm{\nabla^2 P(\vx_t)\vs_t  - \mH_t \vs_t} + \frac{M}{2}\norm{\vs_t}^2 \\
\leq & \frac{M}{2}\norm{\vs_t}^2 + C_g\eps + C_H\sqrt{M\eps}\norm{\vs_t} + \frac{M}{2}\norm{\vs_t}^2 \\
= & M\norm{\vs_t}^2 + C_g\eps + C_H\sqrt{M\eps}\norm{\vs_t} \\
\leq & M\norm{\vs_t}^2 + C_g\eps + 
\frac{C_H\left(\eps + M\norm{\vs_t}^2\right)}{2} \\
= & \frac{(1+C_H)M}{2}\norm{\vs_t}^2 + \left(C_g + \frac{C_H}{2}\right)\eps, 
\end{align*}
which implies
\begin{align*}
\norm{\vs_t}^2 
\geq \frac{2}{(1+C_H)M}\left(\norm{\nabla P(\vx_{t+1})} - \left(C_g + \frac{C_H}{2}\right)\eps\right).
\end{align*}
We also have
\begin{align*}
    & \nabla^2 P(\vx_{t+1}) \\
\succeq & \mH_t - \norm{\mH_t - \nabla^2 P(\vx_{t+1})} \mI \\
\succeq & -\frac{M}{2}\norm{\vs_t}\mI - \norm{\mH_t - \nabla^2 P(\vx_{t+1})} \mI \\
\succeq & -\frac{M}{2}\norm{\vs_t}\mI - \norm{\mH_t - \nabla^2 P(\vx_{t})} \mI - \norm{\nabla^2 P(\vx_{t}) - \nabla^2 P(\vx_{t+1})} \mI \\
\succeq & -\frac{M}{2}\norm{\vs_t}\mI - C_H\sqrt{M\eps} \mI - M\norm{\vs_t} \mI \\
\succeq & -\frac{M+2M}{2}\norm{\vs_t}\mI - C_H\sqrt{M\eps} \mI,
\end{align*}
which implies
\begin{align*}
 \norm{\vs_t}  
\geq -\frac{2}{M+2M}\lambda_{\min}\left(\nabla^2 P(\vx_{t+1})\right) -  \frac{2C_H\sqrt{M}}{M+2M} \sqrt{\eps}.
\end{align*}
If $\vx_{t+1}$ is not an $(\eps,\sqrt{M\eps}\,)$-second-order stationary point, then
\begin{itemize}
\item If $\norm{\nabla P(\vx_{t+1})}\geq\eps$, we have
\begin{align*}
\norm{\vs_t} 
\geq \sqrt{\frac{2}{(1+C_H)M}\left(1-C_g - \frac{C_H}{2}\right)\eps} 
> \frac{1}{2}\sqrt{\frac{\eps}{M}}.
\end{align*}
\item If $-\lambda_{\min}\left(\nabla^2 P(\vx_{t+1})\right)\geq\sqrt{M\eps}$, we have
\begin{align*}
 \norm{\vs_t}  
\geq \frac{2}{3M}\sqrt{M\eps} -  \frac{2C_H\sqrt{M\eps}}{3M} 
> \frac{1}{2}\sqrt{\frac{\eps}{M}}.
\end{align*}
\end{itemize}
\end{proof}

\subsection{Proof of Theorem \ref{thm:MCN}}

\begin{proof}
Lemma~\ref{lem:norm-s} means we only needs to show that $T$ is no more than $\big\lceil 192(P(\vx_0) - P^*)\sqrt{M}\eps^{-1.5}\big\rceil$ by 
assuming $\norm{\vs_t^*}\geq \frac{1}{2}\sqrt{\eps/M}$ for all $t=1,\dots,T-1$. We have
\begin{align}\label{ieq:descent-P}
\begin{split}
& P(\vx_{t+1}) - P(\vx_t) \\
\leq &  \nabla P(\vx_t)^\top \vs_t^* + \frac{1}{2}(\vs_t^*)^\top \nabla^2 P(\vx_t) \vs_t^* + \frac{M}{6}\norm{\vs_t^*}^3 \\
= &  \vg_t^\top \vs_t^* + \frac{1}{2}(\vs_t^*)^\top \mH_t \vs_t^* + \frac{M}{6}\norm{\vs_t^*}^3  + (\nabla P(\vx_t)-\vg_t)^\top \vs_t^* + \frac{1}{2}(\vs_t^*)^\top (\nabla^2P(\vx_t)-\mH_t) \vs_t^* \\
\leq & -\frac{M}{12}\norm{\vs_t^*}^3  + C_g\eps\norm{\vs_t^*} + \frac{C_H\sqrt{M\eps}}{2}\norm{\vs_t^*}^2 \\
\leq & -\frac{M}{12}\norm{\vs_t^*}^3  + 4C_gM\norm{\vs_t^*}^3 + C_HM\norm{\vs_t^*}^3,
\end{split}
\end{align}
where the first inequality comes from (\ref{ieq:h-smooth2}) of Lemma~\ref{lem:h-smooth}; the second inequality comes from (\ref{ieq:cubic-sol-3}) of Lemma~\ref{lem:cubic-exact-solution} and Assumption~\ref{asm:error-g-h}; the last step is due to the assumption $\norm{\vs_t^*}\geq \frac{1}{2}\sqrt{\eps/M}$.
The result of (\ref{ieq:descent-P}) can be written as
\begin{align}\label{ieq:s-P}
\begin{split}
 & \frac{1}{8}\left(\frac{1}{12} - 4C_g - C_H\right)\sqrt{\frac{\eps^3}{M}} \\
\leq & \left(\frac{1}{12} - 4C_g - C_H\right)M\norm{\vs_t^*}^3 \\
\leq & P(\vx_t) - P(\vx_{t+1}).
\end{split}
\end{align}
Summing over inequality (\ref{ieq:s-P}) with $t=0,\dots, T-1$, we have
\begin{align*}
\frac{T}{192}\sqrt{\frac{\eps^3}{M}} \leq P(\vx_0) - P(\vx_t) \leq P(\vx_0) - P^*,
\end{align*}
which implies
\begin{align*}
T \leq 192(P(\vx_0) - P^*)\sqrt{M}\eps^{-3/2}.    
\end{align*}
Hence, Lemma~\ref{lem:norm-s} means the algorithm could find an $\big(\eps, \sqrt{M\eps}\,\big)$-second-order stationary point of $P(\vx)$ with at most $\big\lceil 192(P(\vx_0) - P^*)\sqrt{M}\eps^{-1.5}\big\rceil+1$ iterations.
\end{proof}

\subsection{Proof of Theorem \ref{thm:sum_K}}
\begin{proof}
We first use induction to show that 
\begin{align}\label{ieq:teps}
 \norm{\vy_t - \vy^*(\vx_t)}\leq\teps   
\end{align}
holds for any $t\geq 0$.
For $t=0$, Lemma~\ref{lem:agd} directly implies $\norm{\vy_0 - \vy^*(\vx_0)}\leq\teps$. Suppose it holds that $\norm{\vy_{t-1} - \vy^*(x_{t-1})}\leq\teps$ for any $t=t'-1$, then we have
\begin{align*}
 & \norm{\vy_{t'} - \vy^*(x_{t'})} \\
\leq & \sqrt{\kappa+1}\left(1 - \frac{1}{\sqrt{\kappa}}\right)^{K_{t'}/2}\norm{\vy_{t'-1} - \vy^*(\vx_{t'})}  \\
\leq & \sqrt{\kappa+1}\left(1 - \frac{1}{\sqrt{\kappa}}\right)^{K_{t'}/2}\big(\norm{\vy_{{t'}-1} - \vy^*(\vx_{{t'}-1})} + \norm{\vy^*(\vx_{{t'}-1}) - \vy^*(\vx_{t'})}\big)  \\
\leq & \sqrt{\kappa+1}\left(1 - \frac{1}{\sqrt{\kappa}}\right)^{K_{t'}/2}\left(\teps + \kappa\norm{\vx_{{t'}-1} - \vx_{t'}}\right)  \\
= & \sqrt{\kappa+1}\left(1 - \frac{1}{\sqrt{\kappa}}\right)^{K_{t'}/2}\left(\teps + \kappa\norm{\vs^*_{t'-1}}\right)  
\leq  \teps,
\end{align*}
where the first inequality is based on Lemma~\ref{lem:agd}; the second one use triangle inequality; the third one is based on induction hypothesis and the last step use the definition of $K_t$ and $\teps$.

Combining inequality~(\ref{ieq:teps}) with Lemma~\ref{lem:P-smooth}, Assumption~\ref{asm:g-smooth} and Assumption~\ref{asm:h-smooth}, we obtain 
\begin{align*}
  & \norm{\vg_t - \nabla P(\vx_t)} \\
= & \norm{\nabla_x f(\vx_t, \vy_t) - \nabla_x f(\vx_t, \vy^*(\vx_t))} \\
\leq & \ell\norm{\vy_t - \vy^*(\vx_t)} \leq   C_g\eps
\end{align*}
and
\begin{align*}
 & \norm{\nabla^2 P(\vx_t)-\mH_t}  \\
= & \norm{\nabla^2 f(\vx_t,\vy_t^*(\vx_t))-\nabla^2 f(\vx_t,\vy_t)} \\
\leq & \rho\norm{\vy_t-\vy^*(\vx_t)} \\
\leq & C_H\sqrt{M\eps}.
\end{align*}
The total gradient calls from AGD in Algorithm~\ref{alg:MCN} satisties 
\begin{align*}
\small\begin{split}
 &\sum_{t=0}^{T} K_t\\
\leq & 2\sqrt{\kappa}\left[K_0 + \sum_{t=1}^{T} \log\left(\sqrt{\kappa+1} + \frac{\kappa\sqrt{\kappa+1}}{\teps}\norm{\vs^*_{t-1}}\right)\right] +T+1 \\
= & \frac{2\sqrt{\kappa}}{3}\left[3K_0 + \sum_{t=1}^{T} \log\left(\sqrt{\kappa+1} + \frac{\kappa\sqrt{\kappa+1}}{\teps}\norm{\vs^*_{t-1}}\right)^3 \right] +T+1 \\
\leq & \frac{2\sqrt{\kappa}}{3}\left[3K_0 + \sum_{t=1}^{T}\log\left(8(\kappa+1)^{1.5} + \frac{8\kappa^3(\kappa+1)^{1.5}}{\teps^3}\norm{\vs^*_{t-1}}^3\right) \right] +T+1  \\
= & \frac{2\sqrt{\kappa}}{3}\left[3K_0+\log\left(\prod_{t=1}^{T}\left(8(\kappa+1)^{1.5} + \frac{8\kappa^3(\kappa+1)^{1.5}}{\teps^3}\norm{\vs^*_{t-1}}^3\right)\right)\right] +T+1   \\
\leq & \frac{2\sqrt{\kappa}}{3}\left[3K_0 + \log\left(\frac{1}{T}\sum_{t=1}^{T}\left(8(\kappa+1)^{1.5} + \frac{8\kappa^3(\kappa+1)^{1.5}}{\teps^3}\norm{\vs^*_{t-1}}^3\right)\right)^T\right] +T+1   \\
= & \frac{2\sqrt{\kappa}T}{3}\left[\frac{3}{T}\log\left(\frac{\sqrt{\kappa+1}}{\teps}\norm{\vy^*(\vx_0)}\right) + \log\left(8(\kappa+1)^{1.5} + \frac{8\kappa^3(\kappa+1)^{1.5}}{T\teps^3} \sum_{t=1}^{T}\norm{\vs^*_{t-1}}^3\right)\right] +T+1,
\end{split}
\end{align*}
where the first inequality is based on the fact $(a+b)^3\leq 8(a^3+b^3)$ for $a,b\geq 0$; the second inequality is based on AM–GM inequality.
\end{proof}

\subsection{Proof of Corollary~\ref{cor:complexity}}
\begin{proof}
The output is a desired second-order-stationary point can be proved by directly combining Theorem~\ref{thm:MCN} and Theorem~\ref{thm:sum_K}. Here we introduce $\eps'$ to eliminate the constant term $4\sqrt{2}$ in $M$.
Connecting the upper bound of $\sum_{t=1}^T K_t$ in Theorem~\ref{thm:sum_K} and inequality (\ref{ieq:s-P}) in the proof of Theorem~\ref{thm:MCN}, we have
{\small\begin{align*}
   \sum_{t=1}^{T} K_t  
\leq & \frac{2\sqrt{\kappa}T}{3}\log\left(\frac{3}{T}\log\left(\frac{\sqrt{\kappa+1}}{\teps}\norm{\vy^*(\vx_0)}\right)+8(\kappa+1)^{1.5} + \frac{192\kappa^3(\kappa+1)^{1.5}}{TM\teps'^3} \left(P(\vx_0)-P^*\right)\right) \\
= & \tilde\fO\left(\sqrt{\kappa M}\eps^{-1.5}\right) =   \tilde\fO\left(\kappa^2\sqrt{\rho}\eps^{-1.5}\right).
\end{align*}}\\
The claim follows from the fact that we call gradient oracle for $\fO(\sum_{t=1}^{T} K_t)$ times and perform Hessian (inverse) and exact cubic sub-problem solver calls for $\fO(T)$ times.
\end{proof}

\subsection{Proof of Corollary~\ref{cor:local-minimax}}

\begin{proof}
Following the the proof of Theorem~\ref{thm:sum_K}, Lemma \ref{lem:agd} means
\begin{align}\label{ieq:final-agd}
\begin{split}
 & \norm{\hy - \vy^*(\hx)} \\
\leq & \sqrt{\kappa+1}\left(1 - \frac{1}{\sqrt{\kappa}}\right)^{\hK/2}\norm{\vy_t - \vy^*(\hx)}  \\
\leq & \sqrt{\kappa+1}\left(1 - \frac{1}{\sqrt{\kappa}}\right)^{\hK/2}\big(\norm{\vy_t - \vy^*(\vx_t)} + \norm{\vy^*(\vx_t) - \vy^*(\hx)}\big)  \\
\leq & \sqrt{\kappa+1}\left(1 - \frac{1}{\sqrt{\kappa}}\right)^{\hK/2}\left(\teps + \kappa\norm{\vs_t^*}\right)  \\
\leq & \sqrt{\kappa+1}\left(1 - \frac{1}{\sqrt{\kappa}}\right)^{\hK/2}\left(\teps + \frac{\kappa}{{2^{2.25}}}\sqrt{\frac{\eps}{M}}\right)  \\
\leq & \min\left\{\frac{\alpha}{2\ell},~ \frac{\beta}{8\kappa^2\rho}\right\},
\end{split}
\end{align}
where we use $\norm{\vs_t^*}\leq \frac{1}{2}\sqrt{\eps/M}$ which is based on Lemma~\ref{lem:norm-s}.
Corollary~\ref{cor:complexity} means $\hx$ is an $\left(\eps, \kappa^{1.5}\sqrt{\rho\eps}\right)$-second-order stationary point of $P(\vx)$. Then using smoothness of $f$ and inequality (\ref{ieq:final-agd}), we have
\begin{align*}
 \norm{\nabla_x f(\hx, \hy)} 
\leq & \norm{\nabla_x f(\hx, \vy^*(\hx))} + \norm{\nabla_x f(\hx, \vy^*(\hx)) - \nabla_x f(\hx, \hy)} \\
\leq & \norm{\nabla P(\hx)} + \ell\norm{\vy^*(\hx) - \hy}  
= \frac{5\alpha}{6}
\end{align*}
and
\begin{align*}
 \norm{\nabla_y f(\hx,\hy)} 
= & \norm{\nabla_y f(\hx,\hy)-\nabla_y f(\hx,\vy^*(\hx))} 
\leq  \ell\norm{\hy- \vy^*(\hx)} \leq  \frac{\alpha}{2},
\end{align*}
which means $\norm{\nabla f(\hx,\hy)} \leq \alpha$. 
We also have
\begin{align*}
\small\begin{split}
& \nabla_{xx}^2 f(\hx, \hy) - \nabla_{xy}^2 f(\hx, \hy) \left(\nabla_{yy}^2 f(\hx, \hy)\right)^{-1} \nabla_{yx}^2f(\hx, \hy) \\
\succeq & \nabla_{xx}^2 P(\hx) - \norm{\nabla_{xx}^2 f(\hx, \hy) - \nabla_{xy}^2 f(\hx, \hy) \left(\nabla_{yy}^2 f(\hx, \hy)\right)^{-1} \nabla_{yx}^2f(\hx, \hy) - \nabla^2 P(\hx)}\mI \\
\succeq & \nabla_{xx}^2 P(\hx) - \norm{\nabla_{xx}^2 f(\hx, \hy) - \nabla_{xx}^2 f(\hx,\vy^*(\hx))}\mI \\
& {-} \norm{\nabla_{xy}^2 f(\hx, \hy) \left(\nabla_{yy}^2 f(\hx, \hy)\right)^{-1} \nabla_{yx}^2f(\hx, \hy) {-} \nabla_{xy}^2 f(\hx, \vy^*(\hx)) \left(\nabla_{yy}^2 f(\hx, \vy^*(\hx))\right)^{-1} \nabla_{yx}^2f(\hx, \vy^*(\hx))}\mI \\
\succeq & -\frac{\beta}{2}\mI - \kappa\rho\norm{\hy-\vy^*(\hx)}\mI  - 3\kappa^2\rho\norm{\hy-\vy^*(\hx)}\mI \\
\succeq & -\beta \mI,
\end{split}
\end{align*}
where the third inequality depends Lemma~\ref{lem:lip-another} that $\nabla^2_{xy}f(\vx,\cdot)\left(\nabla_{yy}^2 f(\vx, \cdot)\right)^{-1} \nabla_{yx}^2$ is $3\kappa^2\rho$-Lipschitz continuous.
Hence, Definition~\ref{dfn:strict} means there exists a local minimax point $(\vx^*, \vy^*)$ such that $\norm{\hx-\vx^*}^2+\norm{\hy-\vy^*}^2\leq\gamma^2$.
\end{proof}

\section{Proof of Section~\ref{sec:imcn}} \label{apsec:imcn}
In this section, we provide the proofs for Theorems and Lemmas in Section~\ref{sec:imcn}.

\subsection{Proof of Lemma \ref{lem:Chebyshev}}
\begin{proof}
Since $\mX$ is symmetric positive definite and $\vzero \prec \mu' \mI\preceq \mX \preceq \ell' \mI \prec \mI$, we have~\citet[Section 8.6.1]{axelsson_1994}
\begin{align}\label{eq:invX-cheby}
    \mX^{-1} = \frac{c_0}{2}\mI+\sum_{k=1}^{\infty}c_k\mT_k(\mZ'). 
\end{align}
Chebyshev polynomials on scalar domain $-1 \leq \lambda \leq 1$ can be written as $T_k(\lambda)=\cos\left(k\cos^{-1}(\lambda)\right)$. Let eigenvalue decomposition of $\mZ'$ be $\mU{\bf\Lambda}\mU^\top$, then we have $\norm{\mT_k(\mZ')}=\norm{\mU\mT_k({\bf\Lambda})\mU^\top}\leq 1$. Combining definition of $c_k$ and (\ref{eq:invX-cheby}), we have
\begin{align*}
& \norm{\mX^{-1} - \left(\frac{c_0}{2}\mI+\sum_{k=1}^{K'}c_k\mT_k(\mZ')\right)}   
=  \norm{\sum_{k=K'+1}^{\infty}c_k\mT_k(\mZ')}  \\
\leq &  \frac{2}{\sqrt{\ell'\mu'}}\sum_{k=K'+1}^{\infty}\left|\frac{\sqrt{\mu'/\ell'}-1}{\sqrt{\mu'/\ell'}+1}\right|^k 
= \frac{\sqrt{\ell'/\mu'}-1}{\sqrt{\ell'\mu'}}\left(1-\frac{2}{\sqrt{\ell'/\mu'}+1}\right)^{K'}. 
\end{align*}
\end{proof}

\subsection{Proof of Lemma~\ref{lem:inverse-approx}}
\begin{proof}
Recall that $-\ell \mI \preceq \nabla_{yy}^2 f(\vx_t,\vy_t)\preceq -\mu \mI$.
We estimate the inverse of Hessian of $\vy$ as 
\begin{align*}
\left(-\frac{1}{2\ell}\nabla_{yy}^2 f(\vx_t,\vy_t)\right)^{-1} \approx \frac{c_0}{2}\mI+\sum_{k=1}^{K'}c_k\mT_k(\mZ_t).
\end{align*}
Lemma~\ref{lem:Chebyshev} means
\begin{align*}
  \norm{\left(-\frac{1}{2\ell}\nabla_{yy}^2 f(\vx_t,\vy_t)\right)^{-1} - \left(\frac{c_0}{2}I+\sum_{k=1}^{K'}c_k\mT_k(\mZ_t)\right)} \leq 2(\kappa-\sqrt{\kappa})\left(1-\frac{2}{\sqrt{\kappa}+1}\right)^{K'}.
\end{align*}
Hence, we have
\begin{align}\label{ieq:inv-approx}
\small\begin{split}
& \norm{\nabla_{xy}^2 f(\vx_t,\vy_t)\left(-\nabla_{yy}^2 f(\vx_t,\vy_t)\right)^{-1}\nabla_{yx}^2 f(\vx_t,\vy_t) 
- \nabla_{xy}^2 f(\vx_t,\vy_t)\bigg(\frac{c_0}{4\ell}I+\frac{1}{2\ell}\sum_{k=1}^{K'}c_k\mT_k(\mZ_t)\bigg)\nabla_{yx}^2 f(\vx_t,\vy_t)} \\
\leq & \norm{\nabla_{xy}^2 f(\vx_t,\vy_t)}\norm{\left(-\nabla_{yy}^2 f(\vx_t,\vy_t)\right)^{-1}
- \left(\frac{c_0}{4\ell}I+\frac{1}{2\ell}\sum_{k=1}^{K'}c_k\mT_k(\mZ_t)\right)}\norm{\nabla_{yx}^2 f(\vx_t,\vy_t)} \\
\leq & \ell(\kappa-\sqrt{\kappa})\left(1-\frac{2}{\sqrt{\kappa}+1}\right)^{K'} \leq \kappa\ell\left(1-\frac{2}{\sqrt{\kappa}+1}\right)^{K'}
\end{split}
\end{align}
Consider that $\nabla_{xx}^2P(\vx)=\nabla_{xx}^2 f(\vx,\vy^*(\vx)) - \nabla_{xy}^2 f(\vx,\vy^*(\vx))\left(\nabla_{yy}^2 f(\vx,\vy^*(\vx))\right)^{-1}\nabla_{yx}^2 f(\vx,\vy^*(\vx))$ and we obtain $\vy_t\approx \vy^*(\vx_t)$ by AGD. Then we can bound the approximation error of $\mH_t$ as follows
\begin{align*}
& \norm{\nabla_{xx}^2P(\vx_t)-\left(\nabla_{xx}^2 f(\vx_t,\vy_t) + \nabla_{xy}^2 f(\vx_t,\vy_t)\left(\frac{c_0}{4\ell}I+\frac{1}{2\ell}\sum_{k=1}^{K'}c_k\mT_k(\mZ_t)\right)\nabla_{yx}^2 f(\vx_t,\vy_t)\right)} \\
\leq & \norm{\nabla_{xx}^2P(\vx_t) - \left(\nabla_{xx}^2 f(\vx_t,\vy_t) - \nabla_{xy}^2 f(\vx_t,\vy_t)\left(\nabla_{yy}^2 f(\vx_t,\vy_t)\right)^{-1}\nabla_{yx}^2 f(\vx_t,\vy_t)\right)} \\
& + \Bigg\|\nabla_{xx}^2 f(\vx_t,\vy_t) - \nabla_{xy}^2 f(\vx_t,\vy_t)\left(\nabla_{yy}^2 f(\vx_t,\vy_t)\right)^{-1}\nabla_{yx}^2 f(\vx_t,\vy_t) \\
&~~~~- \left(\nabla_{xx}^2 f(\vx_t,\vy_t) + \nabla_{xy}^2 f(\vx_t,\vy_t)\left(\frac{c_0}{4\ell}\mI+\frac{1}{2\ell}\sum_{k=1}^{K'}c_k\mT_k(\mZ_t)\right)\nabla_{yx}^2 f(\vx_t,\vy_t)\right)\Bigg\|_2 \\
\leq & 3\kappa^2\rho\norm{\vy_t-\vy^*(\vx_t)} + \kappa\ell\left(1-\frac{2}{\sqrt{\kappa}+1}\right)^{K'} \\
\leq & 3\rho\kappa^2\sqrt{\kappa+1}\left(1-\frac{1}{\sqrt{\kappa}}\right)^{K_t/2}\norm{\vy_{t-1}-\vy^*(\vx_t)} + \kappa\ell\left(1-\frac{2}{\sqrt{\kappa}+1}\right)^{K'},
\end{align*}
where the second inequality is according to $\nabla_{xy}^2 f(\vx, \cdot) \left(\nabla_{yy}^2 f(\vx, \cdot)\right)^{-1} \nabla_{yx}^2 f(\vx, \cdot)$ is $3\kappa^2\rho$-Lipschitz continuous (Lemma~\ref{lem:lip-another}) and the result of (\ref{ieq:inv-approx}); the last step is based on Lemma~\ref{lem:agd}.
\end{proof}

\subsection{Proof of Theorem \ref{thm:IMCN-T}}
We first introduce some lemmas.

\begin{lem}[{\citet[Lemma 7 and Lemma 11]{tripuraneni2017stochastic}}]\label{lem:sub-GD}
Suppose problem (\ref{prob:cubic-sub}) holds that $\norm{\vg}\leq L^2/M$ and $\norm{\mH} \leq (1+\eps_H)L$ for some $\eps_H \leq 3/25$. Running Algorithm~\ref{alg:cubic-GD} with
\begin{align*}
\eta = \frac{1}{20L},~~
\sigma=\frac{C_\sigma M^2\sqrt{\eps^3/M^3}}{4 608(4L+\sqrt{M\eps}\,)}
\end{align*}
and
\begin{align*}
\fK(\eps,\delta')=& \frac{19200L}{C_\sigma \sqrt{M\eps}}\left[6\log\left(\!3{+}\frac{9\sqrt{d_x}}{\delta'}\!\right) {+} 18\log\left(\frac{6L}{\sqrt{M\eps}}\right){+} 14\log\bigg(\!\frac{48(L + C_H\sqrt{M\eps}\,)}{C_\sigma \sqrt{M\eps}}+\frac{24}{C_\sigma}\!\bigg)\right]\\
=& \tilde\fO\left(\frac{L}{\sqrt{M\eps}}\right),
\end{align*}
where $C_\sigma>0$.
Then with probability $1-\delta'$, the output satisfies 
\begin{align*}
\norm{\hs} \leq \norm{\vs^*}+\frac{\sqrt{C_\sigma}}{24}\sqrt{\frac{\eps}{M}}.
\end{align*}
If we further suppose $\norm{\vs^*}\geq \frac{1}{2}\sqrt{\eps/M}$, it holds that
\begin{align*} 
\tm(\hs)\leq \tm(\vs^*) + \frac{C_\sigma M}{12}\norm{\vs^*}^3 
\text{~~and~~} \Delta=\tm(\hs) \leq -\frac{1-C_\sigma}{96}\sqrt{\frac{\eps^3}{M}}.
\end{align*}
\end{lem}

\begin{proof}

Most of the results in this lemma can be obtain by Lemma 7 and Lemma 11 of \citet{tripuraneni2017stochastic} directly. Here, we should the derivation of detailed expression for $\fK(\eps,\delta')$. We follow the proof of~\citet{tripuraneni2017stochastic}'s Lemma 11 to show it. Using notations in this paper, we have $\frac{L'^3}{3\norm{\vs_t^*}^3}\leq\bar\sigma\leq 1$, $\eta=1/20L$, $L'=\frac{1}{2}\sqrt{\eps/M}$, $\eps'=\frac{C_\sigma M}{24}\norm{\vs_t^*}^3$, $\norm{\vs^*}\geq\sqrt{\eps/M}=L'$ and $\gamma=-\lambda_{\min}(\mH)$. The notation $\fK(\eps,\delta')$ corresponds to $\fT(\eps)$ of~\citet{tripuraneni2017stochastic}.
The optimal condition of cubic sub-problem means $\norm{\vs_t^*}\leq 3L/M$.
We have
\begin{align*}
\min\left\{\frac{1}{M\norm{\vs_t^*}-\gamma}, \frac{10\norm{\vs_t^*}^2}{\eps'}\right\} =  \min\left\{\frac{1}{M\norm{\vs_t^*}-\gamma}, \frac{240}{C_\sigma M \norm{\vs_t^*}}\right\},
\end{align*}
then
{\small\begin{align*}
  &  \fK(\eps,\delta') \\
= & \frac{1+\bar\sigma}{\eta}\min\left\{\frac{1}{M\norm{\vs_t^*}-\gamma}, \frac{10\norm{\vs_t^*}^2}{\eps'}\right\}\left[6\log\bigg(1+\BI_{\{\gamma>0\}}\frac{3\sqrt{d}}{\bar\sigma\delta'}\bigg) + 14\log\left(\frac{(\beta+M\norm{\vs_t^*})\norm{\vs_t^*}^2}{\eps'}\right)\right] \\
\leq & \frac{2}{1/20L}\cdot\frac{240}{C_\sigma M \norm{\vs_t^*}}\left[6\log\left(1+\frac{3\sqrt{d}}{\bar\sigma\delta'}\right) + 14\log\left(\frac{(\beta+M\norm{\vs_t^*})\norm{\vs_t^*}^2}{\eps'}\right)\right] \\
= & \frac{9600L}{C_\sigma M \norm{\vs_t^*}}\left[6\log\left(3\bar\sigma+\frac{9\sqrt{d}}{\delta'}\right) + 6\log\left(\frac{1}{3\bar\sigma}\right)+ 14\log\left(\frac{24\beta\norm{\vs_t^*}^2}{C_\sigma M \norm{\vs_t^*}^3}+\frac{24M\norm{\vs_t^*}^3}{C_\sigma M \norm{\vs_t^*}^3}\right)\right] \\
\leq & \frac{9600L}{C_\sigma M \frac{1}{2}\sqrt{\eps/M}}\left[6\log\left(3+\frac{9\sqrt{d}}{\delta'}\right) + 18\log\left(\frac{\norm{\vs_t^*}}{L'}\right)+ 14\log\left(\frac{24\beta}{C_\sigma M \frac{1}{2}\sqrt{\eps/M}}+\frac{24}{C_\sigma}\right)\right] \\
\leq & \frac{19200L}{C_\sigma \sqrt{M\eps}}\left[6\log\left(3+\frac{9\sqrt{d}}{\delta'}\right) + 18\log\left(\frac{6L}{\sqrt{M\eps}}\right)+ 14\log\left(\frac{48(L + C_H\sqrt{\rho\eps}\,)}{C_\sigma \sqrt{M\eps}}+\frac{24}{C_\sigma}\right)\right] \\
= & \tilde\fO\left(\frac{L}{\sqrt{M\eps}}\right).
\end{align*}}
\end{proof}

The following lemma corresponds to the case of calling Algorithm~\ref{alg:cubic-GD} when $\norm{\vg}\geq L^2/M$. It extends Lemma 9 of \citet{tripuraneni2017stochastic}, leading to the result includes term $\norm{\vs_t}$ additionally, which is useful to bound the number of gradient calls from AGD in further analysis. 
\begin{lem}\label{lem:sub-Cauchy}
Using the notation of Algorithm~\ref{alg:IMCN}, we suppose $\norm{\vg_t}\geq L^2/M$ and Assumption~\ref{asm:error-g-h} holds with $C_H\leq 1/40$ and $C_g\leq 7/600$ at $t$-th iteration. Let $\vs_t=-R_C \vg_t/\norm{\vg_t}$, where 
\begin{align*}
R_C=-\dfrac{\vg_t^\top \mH_t\vg_t}{M\norm{\vg_t}^2} + \sqrt{\left(\dfrac{\vg_t^\top \mH_t \vg_t}{M\norm{\vg_t}^2}\right)^2 + \dfrac{2\norm{\vg_t}}{M}}.
\end{align*}
Then we have 
\begin{align*}
\Delta_t=\tm_t(\vs_t) \leq -\frac{7}{20}\sqrt{\frac{\eps^3}{M}} ~~~\text{and}~~~
\frac{M}{24}\norm{\vs_t}^3 \leq  -\frac{7}{20}\sqrt{\frac{\eps^3}{M}}  + P(\vx_t) - P(\vx_t+\vs_t).
\end{align*}
\end{lem}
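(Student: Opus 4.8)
The plan is to analyze the Cauchy point $\vs_t = -R_C\,\vg_t/\norm{\vg_t}$ exactly as in the classical cubic-regularization literature, but carrying the extra additive error terms from Assumption~\ref{asm:error-g-h} and an extra $\norm{\vs_t}$-dependent term that will later feed into the AGD gradient-count bound. First I would record the defining property of $R_C$: it is the positive root of $\frac{M}{2}R_C^2 + \frac{\vg_t^\top\mH_t\vg_t}{\norm{\vg_t}^2}R_C - \norm{\vg_t} = 0$, so that along the direction $-\vg_t/\norm{\vg_t}$ the univariate cubic $\phi(R) = -\norm{\vg_t}R + \frac12 \frac{\vg_t^\top\mH_t\vg_t}{\norm{\vg_t}^2}R^2 + \frac{M}{6}R^3$ has $\phi'(R_C)=0$ and hence $\tm_t(\vs_t) = \phi(R_C)$ is the exact minimum over that ray. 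A short computation (plugging the root relation back in) gives a closed form like $\tm_t(\vs_t) = -\frac{M}{12}R_C^3 - \frac12 \frac{\vg_t^\top\mH_t\vg_t}{\norm{\vg_t}^2}R_C^2 \le -\frac{M}{12}R_C^3 + \frac12\norm{\mH_t}R_C^2$, and also a lower bound $R_C \ge \sqrt{\norm{\vg_t}/M}$ type estimate by dropping the nonnegative first term under the square root — actually more carefully, $R_C \ge \sqrt{2\norm{\vg_t}/M}\cdot(\text{const})$ when the linear term in the root dominates, so I would split into the two usual subcases according to the sign/size of $\vg_t^\top\mH_t\vg_t$.

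Next I would use the hypotheses $\norm{\vg_t}\ge L^2/M$ together with $\norm{\nabla P(\vx_t)-\vg_t}\le C_g\eps$ and $\norm{\nabla^2P(\vx_t)-\mH_t}\le C_H\sqrt{M\eps}$ to control $\norm{\mH_t} \le \norm{\nabla^2 P(\vx_t)} + C_H\sqrt{M\eps} \le L + C_H\sqrt{M\eps}$, and to translate the lower bound on $\norm{\vg_t}$ into a lower bound on $R_C$ of order $\sqrt{\norm{\vg_t}/M} \ge L/M \ge \sqrt{\eps/M}$ (using $\eps\le L^2/M$). Feeding these into the closed form for $\tm_t(\vs_t)$ and using that $\sqrt{M\eps}/L \le 1$ and the numeric constants $C_H\le 1/40$, $C_g\le 7/600$ should yield $\Delta_t = \tm_t(\vs_t) \le -\tfrac{7}{20}\sqrt{\eps^3/M}$ after bounding the "bad" $+\frac12\norm{\mH_t}R_C^2$ term against the dominant $-\frac{M}{12}R_C^3$ term — this is where the arithmetic constant-chasing lives and it is largely the same bookkeeping as Lemma~\ref{lem:sub-GD}'s proof.

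For the second inequality I would invoke the descent estimate~\eqref{ieq:h-smooth2} of Lemma~\ref{lem:h-smooth}: $P(\vx_t+\vs_t) - P(\vx_t) \le \nabla P(\vx_t)^\top\vs_t + \frac12\vs_t^\top\nabla^2P(\vx_t)\vs_t + \frac{M}{6}\norm{\vs_t}^3$, rewrite the right side as $\tm_t(\vs_t) + (\nabla P(\vx_t)-\vg_t)^\top\vs_t + \frac12\vs_t^\top(\nabla^2P(\vx_t)-\mH_t)\vs_t$, and bound the two error terms by $C_g\eps\norm{\vs_t} + \frac12 C_H\sqrt{M\eps}\norm{\vs_t}^2$; since $\norm{\vs_t}=R_C \ge \sqrt{\eps/M}$ these are each $\le (\text{small const})\cdot M\norm{\vs_t}^3$, so they can be absorbed into a fraction of the $-\frac{M}{12}\norm{\vs_t}^3$ already present in $\tm_t(\vs_t)$, leaving something like $\frac{M}{24}\norm{\vs_t}^3 \le -\tm_t(\vs_t) + P(\vx_t)-P(\vx_t+\vs_t) \le -\tfrac{7}{20}\sqrt{\eps^3/M} + P(\vx_t)-P(\vx_t+\vs_t)$ once we also use $\Delta_t \le -\tfrac{7}{20}\sqrt{\eps^3/M}$ from the first part. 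The main obstacle is purely the constant management: one has to choose how much of the $\Theta(M\norm{\vs_t}^3)$ budget is spent absorbing the gradient/Hessian inexactness versus surviving as the final $\frac{M}{24}$ coefficient and the $\frac{7}{20}$ threshold, and verify these are simultaneously consistent with $C_H\le 1/40$, $C_g\le 7/600$ and $\eps\le L^2/M$; no conceptually new idea beyond Lemma~9 of \citet{tripuraneni2017stochastic} is needed, only the extra $\norm{\vs_t}^3$ term kept on the left-hand side.
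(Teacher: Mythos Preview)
Your overall architecture matches the paper's, but the execution has a genuine gap in the first half that would prevent you from reaching the constant $7/20$, and a sign slip in the second half.

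\medskip
\textbf{First inequality.} When you plug the root relation back in you write $\tm_t(\vs_t)=-\tfrac{M}{12}R_C^3-\tfrac12 aR_C^2$ (with $a=\vg_t^\top\mH_t\vg_t/\norm{\vg_t}^2$) and then bound $-\tfrac12 aR_C^2\le\tfrac12\norm{\mH_t}R_C^2$. Two problems: the cubic coefficient in that form is actually $M/3$, not $M/12$; and more importantly, replacing $-\tfrac12 aR_C^2$ by $+\tfrac12\norm{\mH_t}R_C^2$ discards the main source of decrease. With $\norm{\mH_t}\le(1+C_H)L$ and even the paper's bound $R_C\ge 7L/(10M)$, the expression $-\tfrac{M}{3}R_C^3+\tfrac12(1+C_H)LR_C^2$ is \emph{positive} near the threshold, so this route cannot give $\Delta_t\le-\tfrac{7}{20}\sqrt{\eps^3/M}$. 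The paper instead substitutes the root relation the other way ($aR_C=\norm{\vg_t}-\tfrac{M}{2}R_C^2$) to obtain the equivalent form
\[
\tm_t(\vs_t)=-\frac{R_C\,\norm{\vg_t}}{2}-\frac{M}{12}R_C^3,
\]
in which both terms are manifestly nonpositive. Then a single monotonicity argument (the map $x\mapsto -x+\sqrt{x^2+2}$ is decreasing, and $a\le(1+C_H)L$ from $\norm{\nabla^2P(\vx_t)}\le L$ and $\sqrt{M\eps}\le L$) gives $R_C\ge \tfrac{L}{M}\bigl(-(1+C_H)+\sqrt{(1+C_H)^2+2}\,\bigr)\ge 7L/(10M)$, and the first term alone yields $-\tfrac{R_C\norm{\vg_t}}{2}\le -\tfrac{7L^3}{20M^2}\le -\tfrac{7}{20}\sqrt{\eps^3/M}$. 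No case split on the sign of $a$ is needed.

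\medskip
\textbf{Second inequality.} Your chain ``$\tfrac{M}{24}\norm{\vs_t}^3\le -\tm_t(\vs_t)+P(\vx_t)-P(\vx_{t+1})\le -\tfrac{7}{20}\sqrt{\eps^3/M}+P(\vx_t)-P(\vx_{t+1})$'' has the second inequality pointing the wrong way: from $\Delta_t\le-\tfrac{7}{20}\sqrt{\eps^3/M}$ you get $-\tm_t(\vs_t)\ge\tfrac{7}{20}\sqrt{\eps^3/M}$, not $\le$. The paper does not use $\Delta_t$ separately here; it inserts the \emph{refined} bound $\tm_t(\vs_t)\le-\tfrac{7}{20}\sqrt{\eps^3/M}-\tfrac{M}{12}R_C^3$ directly into the descent estimate, keeps $-\tfrac{M}{12}R_C^3$ as budget, and absorbs the error terms $C_g\eps R_C+\tfrac12 C_H\sqrt{M\eps}R_C^2$ into half of it by using $\eps\le L^2/M$ (not $\eps\le MR_C^2$) and checking that $q(x)=x^2-24C_g-12C_Hx$ is nonnegative for $x=MR_C/L\ge 7/10$. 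This is exactly tight at $x=7/10$ with $C_g=7/600$, $C_H=1/40$, which is why those particular constants appear.
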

\begin{proof}
The assumption on $\norm{\vg_t}$ means
\begin{align*}
R_C =& -\dfrac{\vg_t^\top \mH_t\vg_t}{M\norm{\vg_t}^2} + \sqrt{\left(\dfrac{\vg_t^\top \mH_t \vg_t}{M\norm{\vg_t}^2}\right)^2 + \dfrac{2\norm{\vg_t}}{M}} \\
\geq & \frac{1}{M}\left(-\dfrac{\vg_t^\top \mH_t\vg_t}{\norm{\vg_t}^2} + \sqrt{\left(\dfrac{\vg_t^\top \mH_t \vg_t}{\norm{\vg_t}^2}\right)^2 + 2L^2}\right).
\end{align*}
Note that function $-x+\sqrt{x^2+2}$ is decreasing on $\BR$ and 
$\norm{\nabla^2 P(\vx_t)}\leq L$. Combining conditions of $\norm{\nabla^2 P(\vx_t)-\mH_t} \leq~C_H\sqrt{M\eps}$ and $\eps\leq L^2/M$, we obtain $\norm{(\nabla^2 P(\vx_t)-\mH_t)\vg_t} \leq   C_H L\norm{\vg_t}$, which implies $\vg_t^\top \mH_t \vg_t \leq (1+C_H)L\norm{\vg_t}^2$. Hence, for all $C_H\leq 1/40$, we have
\begin{align}\label{bound:lower-RC}
\begin{split}
R_C \geq & \frac{L}{M}\left(-(1+C_H) + \sqrt{(1+C_H)^2 + 2}\right) \\
\geq & \frac{L}{M}\left(-\left(1+\frac{1}{40}\right) + \sqrt{\left(1+\frac{1}{40}\right)^2 + 2}\right) \geq \frac{7L}{10M}.
\end{split}
\end{align}
Observe that we have $R_C=\argmin_{\eta\in{\BR^+}}\tm_t(-\eta \vg_t/\norm{\vg_t})$, which means the derivative of function $q_t(\eta)=\tm_t(-\eta\cdot \vg_t/\norm{\vg_t})$ is 0 at $\eta=R_C$. Hence, we have
\begin{align}\label{bound:tm-s-eps}
\small\begin{split}
\tm_t\left(s_t\right) 
= \tm_t\left(-\frac{R_C\vg_t}{\norm{\vg_t}}\right) 
= -\frac{R_C\norm{\vg_t}}{2} - \frac{MR_C^3}{12} 
\leq -\frac{7L^3}{20M^2}  - \frac{MR_C^3}{12}  
\leq - \frac{7}{20}\sqrt{\frac{\eps^3}{M}}- \frac{MR_C^3}{12}.
\end{split}
\end{align}
Using inequality (\ref{ieq:h-smooth2}) of Lemma~\ref{lem:h-smooth}, we have
\begin{align}\label{bound:Pt_Pt1}
\begin{split}
& P(\vx_t + \vs_t) - P(\vx_t) \\
\leq &  \nabla P(\vx_t)^\top \vs_t + \frac{1}{2}(\vs_t)^\top \nabla^2 P(\vx_t) \vs_t + \frac{M}{6}\norm{\vs_t}^3 \\
= &  \vg_t^\top \vs_t + \frac{1}{2}(\vs_t)^\top \mH_t \vs_t + \frac{M}{6}\norm{\vs_t}^3  + (\nabla P(\vx_t)-\vg_t)^\top \vs_t + \frac{1}{2}(\vs_t)^\top (\nabla^2P(\vx_t)-\mH_t) \vs_t \\
\leq & \tm_t\left(\vs_t\right) + C_g\eps\norm{\vs_t} + \frac{C_H\sqrt{M\eps}}{2}\norm{\vs_t}^2 \\
= & -\frac{R_C\norm{\vg_t}}{2} - \frac{MR_C^3}{12}  + C_g\eps\norm{\vs_t} + \frac{C_H\sqrt{M\eps}}{2}\norm{\vs_t}^2 \\
= & -\frac{R_C\norm{\vg_t}}{2} - \frac{MR_C^3}{24} - \left(\frac{MR_C^3}{24}  - C_g\eps\norm{\vs_t} - \frac{C_H\sqrt{M\eps}}{2}\norm{\vs_t}^2\right) \\
\leq & -\frac{7}{20}\sqrt{\frac{\eps^3}{M}} - \frac{MR_C^3}{24} - \frac{L^2R_C}{24M}\left(\left(\frac{MR_C}{L}\right)^2 - 24C_g - 12C_H\left(\frac{MR_C}{L}\right)\right).
\end{split}
\end{align}
Consider that function $q(x)=x^2-24C_g-12C_Hx$ is minimized at $7/10$ on $\{x:x\geq 7/10\}$ when $C_H\leq 1/40$. Since inequality (\ref{bound:lower-RC}) means $MR_C/L\geq 7/10$, we have
\begin{align*}
 -\left(\left(\frac{MR_C}{L}\right)^2 - 24C_g - 12C_H\left(\frac{MR_C}{L}\right)\right) 
\leq  -\left(\frac{7}{10}\right)^2 + 24C_g + 12C_H\left(\frac{7}{10}\right)  \leq 0
\end{align*}
when $C_H\leq1/40$ and $C_g\leq 7/600$.
Combining all above results, we have
\begin{align*}
 \frac{M}{24}\norm{\vs_t}^3 \leq  -\frac{7}{20}\sqrt{\frac{\eps^3}{M}}  + P(\vx_t) - P(\vx_t+\vs_t).
\end{align*}
\end{proof}
 
Since we approximate $\nabla P(\vx_t)$ and $\nabla^2 P(\vx_t)$ by $\vg_t$ and $\mH_t$ respectively, the procedure of Algorithm~\ref{alg:IMCN} can be viewed as solving nonconvex optimization problem~$\min_{\vx\in\BR^{d_x}}P(\vx)$ by inexact first-order and second-order information. Hence, the following lemma holds for our algorithm.

\begin{lem}[{\citet[Claim 2]{tripuraneni2017stochastic}}]\label{lem:sub-GD2}
Suppose that Assumption~\ref{asm:error-g-h} holds with   $C_g\leq1/240$ and $C_H\leq1/200$; and we have $\norm{\vg_t}\leq L^2/M$ and $\tm_t(\vs_t)\leq-\frac{1-C_\sigma}{96}\sqrt{\eps^3/M}$ with $C_\sigma\leq1/4$. We have
\begin{align*}
    P(\vx_t+\vs_t)-P(\vx_t) \leq & -\frac{M}{32}\norm{\vs_t^*}^3 -  \frac{1}{625}\sqrt{\frac{\eps^3}{M}}.
\end{align*}
\end{lem}
\begin{proof}
We directly use the proof of Claim 2 of ~\citet{tripuraneni2017stochastic}.
The notation of $c_1$, $c_2$, $c_3$ and $c_4$ of \citet{tripuraneni2017stochastic}'s paper corresponds to $C_g$, $C_H$, $C_\sigma$ and $\sqrt{C_\sigma}/24$ here. If $\norm{\vs_t^*}\geq\frac{1}{2}\sqrt{\eps/M}$, we have
{\small\begin{align*}
& P(\vx_{t+1}) - P(\vx_t) \\
\leq & - \left(\frac{1-C_\sigma-48(C_g+C_H\sqrt{C_\sigma}/24) - 12C_HC_\sigma/576}{12}\right)M\norm{\vs_t}^* + \left(C_g+\frac{C_H\sqrt{C_\sigma}}{48}\right)\frac{C_\sigma}{24}\sqrt{\frac{\eps^3}{M}} \\
\leq & - \left(\frac{1-1/4-48(1/240+1/9600) - 3/576/200}{12}\right)M\norm{\vs_t}^* + \left(\frac{1}{240}+\frac{1/2}{9600}\right)\frac{1}{96}\sqrt{\frac{\eps^3}{M}} \\
\leq & -\frac{9M}{20}\norm{\vs_t^*}^3  + \frac{1}{10000}\sqrt{\frac{\eps^3}{M}} 
\leq -\frac{2M}{5}\norm{\vs_t^*}^3 - \frac{1}{150}\sqrt{\frac{\eps^3}{M}}. \end{align*}}\\
If $\norm{\vs_t^*}\leq\frac{1}{2}\sqrt{\eps/M}$, we have
{\small\begin{align*}
& P(\vx_{t+1}) - P(\vx_t) \\
\leq & -\frac{1-C_\sigma}{96}\sqrt{\frac{\eps^3}{M}} {+} \frac{(C_g+C_H\sqrt{C_\sigma}/24)\eps}{2}\sqrt{\frac{\eps}{M}} {+} \frac{C_HC_\sigma/576\sqrt{M\eps^3}}{8M} {+} \Big(C_g{+}\frac{C_H\sqrt{C_\sigma}/24}{2}\Big)\frac{\sqrt{C_\sigma}}{24}\sqrt{\frac{\eps^3}{M}} \\
\leq & -\frac{1}{256}\sqrt{\frac{\eps^3}{M}} -  \frac{1}{256}\sqrt{\frac{\eps^3}{M}} + \frac{1/240+1/9600}{2}\sqrt{\frac{\eps^3}{M}} + \frac{1/800}{4608}\sqrt{\frac{\eps^3}{M}} + \Big(1/240+\frac{1/9600}{2}\Big)\frac{1}{48}\sqrt{\frac{\eps^3}{M}} \\
\leq &  -\frac{1}{256}\sqrt{\frac{\eps^3}{M}} -  \frac{1}{256}\sqrt{\frac{\eps^3}{M}} + \frac{23}{10000}\sqrt{\frac{\eps^3}{M}} 
\leq   -\frac{M}{32}\norm{\vs_t^*}^3 -  \frac{1}{625}\sqrt{\frac{\eps^3}{M}}. 
\end{align*}}\\
We finish the proof by combing above results.
\end{proof}

We first bound the norm of $\vs_t$.
\begin{lem}\label{thm:IMCN-s}
Under the setting of Theorem~\ref{thm:IMCN-T},
if it satisfies $\Delta_t \leq -\frac{1}{128}\sqrt{\eps^3/M}$,
we have
\begin{align*}
    \frac{M}{256}\norm{\vs_t}^3  \leq   P(\vx_t) - P(\vx_t+\vs_t)  - \frac{1}{626}\sqrt{\frac{\eps^3}{M}}.
\end{align*}
with probability $1-\delta'$.
\end{lem}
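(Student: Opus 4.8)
The proof proceeds by a case analysis on which branch of the cubic sub-solver (Algorithm~\ref{alg:cubic-GD}) is executed at iteration $t$, i.e., on whether $\norm{\vg_t}\geq L^2/M$ or $\norm{\vg_t}\leq L^2/M$.

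\textbf{Large-gradient branch.} When $\norm{\vg_t}\geq L^2/M$, the step $\vs_t$ is the Cauchy-type iterate $-R_C\vg_t/\norm{\vg_t}$ and Lemma~\ref{lem:sub-Cauchy} applies directly; the parameters fixed in Theorem~\ref{thm:IMCN-T} guarantee that Assumption~\ref{asm:error-g-h} holds with $C_g\leq 7/600$ and $C_H\leq 1/40$, as that lemma requires. This yields $\tfrac{M}{24}\norm{\vs_t}^3\leq P(\vx_t)-P(\vx_t+\vs_t)-\tfrac{7}{20}\sqrt{\eps^3/M}$. Since $\tfrac{M}{256}\leq\tfrac{M}{24}$ and $\tfrac{7}{20}\geq\tfrac{1}{626}$, the claimed bound follows immediately, in fact deterministically on this branch.

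\textbf{Small-gradient branch.} When $\norm{\vg_t}\leq L^2/M$, the output is the gradient-descent iterate $\vs_t=\hs$, and this is where the failure probability $\delta'$ enters. I would take the sub-solver constant $C_\sigma=1/4$, so that the standing hypothesis $\Delta_t=\tm_t(\vs_t)\leq-\tfrac{1}{128}\sqrt{\eps^3/M}$ is \emph{exactly} the inequality $\tm_t(\vs_t)\leq-\tfrac{1-C_\sigma}{96}\sqrt{\eps^3/M}$ needed to invoke Lemma~\ref{lem:sub-GD2}; that lemma (whose requirements $C_g\leq 1/240$, $C_H\leq 1/200$ are again secured by the setting of Theorem~\ref{thm:IMCN-T}) gives $P(\vx_t+\vs_t)-P(\vx_t)\leq-\tfrac{M}{32}\norm{\vs_t^*}^3-\tfrac{1}{625}\sqrt{\eps^3/M}$. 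Separately, Lemma~\ref{lem:sub-GD} — whose hypotheses $\norm{\vg_t}\leq L^2/M$ and $\norm{\mH_t}\leq(1+\eps_H)L$ hold, the latter because $\norm{\nabla^2 P(\vx_t)}\leq L$ (Lemma~\ref{lem:P-smooth}), $\norm{\nabla^2 P(\vx_t)-\mH_t}\leq C_H\sqrt{M\eps}$ and $\sqrt{M\eps}\leq L$ since $\eps\leq L^2/M$ — yields, with probability $1-\delta'$, the step comparison $\norm{\vs_t}\leq\norm{\vs_t^*}+\tfrac{\sqrt{C_\sigma}}{24}\sqrt{\eps/M}$.

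\textbf{Combining.} Cubing the step comparison via the elementary inequality $(a+b)^3\leq 4(a^3+b^3)$ for $a,b\geq0$ and multiplying by $M/256$ gives $\tfrac{M}{256}\norm{\vs_t}^3\leq\tfrac{M}{64}\norm{\vs_t^*}^3+\tfrac{C_\sigma^{3/2}}{884736}\sqrt{\eps^3/M}$. With $C_\sigma=1/4$ the residual coefficient is $\tfrac{1}{7\,077\,888}\leq\tfrac{1}{391\,250}=\tfrac{1}{625}-\tfrac{1}{626}$, and $\tfrac{M}{64}\leq\tfrac{M}{32}$, so substituting the descent estimate of Lemma~\ref{lem:sub-GD2} collapses everything to $\tfrac{M}{256}\norm{\vs_t}^3\leq P(\vx_t)-P(\vx_t+\vs_t)-\tfrac{1}{626}\sqrt{\eps^3/M}$, which is the assertion. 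The main obstacle here is not conceptual but bookkeeping: one must check that the single choice $C_\sigma=1/4$ (forced by matching the break threshold $-\tfrac{1}{128}\sqrt{\eps^3/M}$ to the threshold in Lemma~\ref{lem:sub-GD2}) together with the values of $C_g,C_H$ and the numbers of AGD and Chebyshev steps prescribed in Theorem~\ref{thm:IMCN-T} simultaneously satisfy the hypotheses of Lemmas~\ref{lem:sub-GD}, \ref{lem:sub-Cauchy}, and~\ref{lem:sub-GD2}, and that every leftover multiple of $\sqrt{\eps^3/M}$ fits inside the narrow $\tfrac{1}{625}$-versus-$\tfrac{1}{626}$ margin; beyond these verifications nothing new is needed.
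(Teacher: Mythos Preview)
Your proposal is correct and follows essentially the same route as the paper's own proof: the same case split on $\norm{\vg_t}\gtrless L^2/M$, the same invocation of Lemmas~\ref{lem:sub-Cauchy}, \ref{lem:sub-GD2}, and \ref{lem:sub-GD}, and the same idea of cubing the step-comparison bound to convert $\norm{\vs_t^*}^3$ into $\norm{\vs_t}^3$. The only cosmetic difference is that you use the sharper elementary inequality $(a+b)^3\leq 4(a^3+b^3)$ where the paper uses $(a+b)^3\leq 8(a^3+b^3)$; both leave enough slack in the $\tfrac{1}{625}$-versus-$\tfrac{1}{626}$ margin, so this makes no substantive difference.
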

\begin{proof}
In the case of $\norm{\vg_t}\geq L^2/M$, Lemma~\ref{lem:sub-Cauchy} implies
\begin{align}\label{bound:s1}
  \frac{M}{24}\norm{\vs_t}^3 \leq  -\frac{7}{20}\sqrt{\frac{\eps^3}{M}}  + P(\vx_t) - P(\vx_{t+1}).
\end{align}
In the case of $\norm{\vg_t}\leq L^2/M$, using the condition $\Delta_t \leq -\frac{1}{128}\sqrt{\eps^3/M}$ and
Lemma~\ref{lem:sub-GD2}, we have
\begin{align}\label{bound:s2}
    P(\vx_t+\vs_t)-P(\vx_t) \leq & -\frac{M}{32}\norm{\vs_t^*}^3 - \frac{1}{625}\sqrt{\frac{\eps^3}{M}}.
\end{align}
On the other hand, Lemma~\ref{lem:sub-GD} implies with probability $1-\delta'$, we have
\begin{align*}
\norm{\vs_t} \leq \norm{\vs_t^*}+\frac{\sqrt{C_\sigma}}{24}\sqrt{\frac{\eps}{M}} 
~~~\Longrightarrow~~~
M\norm{\vs_t}^3 \leq 8M\norm{\vs_t^*}^3+\frac{8C_\sigma^{1.5}M}{24^3}\sqrt{\frac{\eps^3}{M^3}},
\end{align*}
which means
\begin{align}\label{bound:ss}
- \frac{M}{32}\norm{\vs_t^*}^3  \leq  -  \frac{M}{256}\norm{\vs_t}^3 +  \frac{1}{3538944}\sqrt{\frac{\eps^3}{M}}. 
\end{align}
Connecting inequalities (\ref{bound:s2}) and (\ref{bound:ss}), we have
\begin{align}\label{bound:s3}
    P(\vx_t+\vs_t)-P(\vx_t) \leq & -\frac{M}{256}\norm{\vs_t}^3 -  \frac{1}{626}\sqrt{\frac{\eps^3}{M}}.
\end{align}
Combining the results of~(\ref{bound:s1}) and (\ref{bound:s3}), we prove this lemma.
\end{proof}

Then we give the proof of Theorem \ref{thm:IMCN-T}.

\begin{proof}
To bound the number of iterations, we are only interested in the iteration with $\Delta_t\leq-\frac{1}{128}\sqrt{\eps^3/M}$, otherwise the condition in line 7 holds and the algorithm will break the loop.
In such case, Lemma~\ref{thm:IMCN-s} mean
\begin{align*}
    \frac{M}{256}\norm{\vs_t}^3  \leq   P(\vx_t) - P(\vx_t+\vs_t)  - \frac{1}{626}\sqrt{\frac{\eps^3}{M}}.
\end{align*}
Suppose the total number of iteration is $T'$. Summing over $t=0,\dots,T'$, we have
\begin{align*}
T' \leq   626(P(\vx_0) - P(\vx_T))\sqrt{M}\eps^{-1.5}  
\leq   626(P(\vx_0) - P^*)\sqrt{M}\eps^{-1.5} 
=   \fO\left(\kappa^{1.5}\sqrt{\rho}\eps^{-1.5}\right). 
\end{align*}

Lemma~\ref{lem:norm-s} says if
$\vx_t+\vs_t^*$ is not an $\big(\eps, \sqrt{M\eps}\,\big)$-second-order stationary point of $P(\vx)$, then we have $\norm{\vs_t^*}\geq\frac{1}{2}\sqrt{\eps/M}$. Combining Lemma~\ref{lem:sub-GD} and Lemma~\ref{lem:sub-Cauchy}, we have
\begin{align*} 
\Delta_t = \tm_t(\vs_t) \leq \max\left\{-\frac{7}{20}, -\frac{1-C_\sigma}{96}\right\}\sqrt{\frac{\eps^3}{M}}
\leq -\frac{1}{128}\sqrt{\frac{\eps^3}{M}}.
\end{align*}
Hence, if condition in line 7 holds, we conclude that $\vx_t+\vs_t^*$ is an $\big(\eps, \sqrt{M\eps}\,\big)$-second-order stationary point and Lemma 8 of~\citet{tripuraneni2017stochastic} means the output $\hx=\vx_t+\hs$ is a $(\eps,2\kappa^{1.5}\sqrt{\rho\eps}\,)$-second-order stationary point of $P(\vx)$.

Note that Lemma~\ref{lem:sub-GD} means each iteration of our algorithm succeed with probability $1-\delta'$. Let $\fF_t$ be the event that $t$-th iteration fail. The union bound implies 
\begin{align*}
    \BP \left(\bigcup_{t=1}^T \fF_t\right) \leq \sum_{t=1}^T\BP \left( \fF_t\right) = T\delta' = \delta.
\end{align*}
Hence, the probability of success is at least $1-\delta$.
\end{proof}

\subsection{Proof of Theorem \ref{thm:IMCN-sum_K}}

\begin{proof}
We first use induction to show that 
\begin{align}\label{ieq:teps1}
 \norm{\vy_t - \vy^*(\vx_t)}\leq\teps   
\end{align}
holds for any $t\geq 0$.
For $t=0$, Lemma~\ref{lem:agd} directly implies $\norm{\vy_0 - \vy^*(\vx_0)}\leq\teps$. Suppose it holds that $\norm{\vy_{t-1} - \vy^*(x_{t-1})}\leq\teps$ for any $t=t'-1$, then we have
\begin{align}\label{ieq:teps2}
\begin{split}
 & \norm{\vy_{t'} - \vy^*(x_{t'})} \\
\leq & \sqrt{\kappa+1}\left(1 - \frac{1}{\sqrt{\kappa}}\right)^{K_{t'}/2}\norm{\vy_{t'-1} - \vy^*(\vx_{t'})}  \\
\leq & \sqrt{\kappa+1}\left(1 - \frac{1}{\sqrt{\kappa}}\right)^{K_{t'}/2}\big(\norm{\vy_{{t'}-1} - \vy^*(\vx_{{t'}-1})} + \norm{\vy^*(\vx_{{t'}-1}) - \vy^*(\vx_{t'})}\big)  \\
\leq & \sqrt{\kappa+1}\left(1 - \frac{1}{\sqrt{\kappa}}\right)^{K_{t'}/2}\left(\teps + \kappa\norm{\vx_{{t'}-1} - \vx_{t'}}\right)  \\
= & \sqrt{\kappa+1}\left(1 - \frac{1}{\sqrt{\kappa}}\right)^{K_{t'}/2}\left(\teps + \kappa\norm{\vs_{t'-1}}\right)  
\leq  \teps,
\end{split}
\end{align}
where the first inequality is based on Lemma~\ref{lem:agd}; the second one use triangle inequality; the third one is based on induction hypothesis and the last step use the definition of $K_t$ and $\teps$.

Combining inequality~(\ref{ieq:teps1}) with Lemma~\ref{lem:P-smooth} and Assumption~\ref{asm:g-smooth}, we obtain 
\begin{align*}
  \norm{\vg_t - \nabla P(\vx_t)} 
= \norm{\nabla_x f(\vx_t, \vy_t) - \nabla_x f(\vx_t, \vy^*(\vx_t))} 
\leq \ell\norm{\vy_t - \vy^*(\vx_t)} \leq   C_g\eps.
\end{align*}
Combining inequality~(\ref{ieq:teps1}) with Lemma \ref{lem:inverse-approx} and Assumption~\ref{asm:h-smooth},  we have
\begin{align*}
  & \norm{\nabla_{xx}^2P(\vx_t)-\mH_t}   \\
\leq & 3\rho\kappa^2\sqrt{\kappa+1}\left(1-\frac{1}{\sqrt{\kappa}}\right)^{K_t/2}\norm{\vy_{t-1}-\vy^*(\vx_t)} + \kappa\ell\left(1-\frac{2}{\sqrt{\kappa}+1}\right)^{K'} \\
\leq & 3\rho\kappa^2\teps + \frac{C_H\sqrt{M\eps}}{2} 
 \leq C_H\sqrt{M\eps}.
\end{align*}
We obtain the upper bound of $\sum_{t=0}^{T} K_t$ by following the analysis of Theorem~\ref{thm:sum_K}.
\end{proof}

\subsection{Proof of Corollary \ref{cor:complexity2}}

\begin{proof}
The output is a desired second-order-stationary point can be proved by directly combining Theorem~\ref{thm:IMCN-T} and~Theorem \ref{thm:IMCN-sum_K}. 
Connecting the upper bounds of $\sum_{t=1}^T K_t$ and Theorem~\ref{thm:IMCN-sum_K} , we conclude the total number of gradient calls is at most
\begin{align*}
   &\sum_{t=1}^{T} K_t  \\
\leq & \frac{2\sqrt{\kappa}T}{3}\log\left(\frac{3}{T}\log\left(\frac{\sqrt{\kappa+1}}{\teps}\norm{\vy^*(\vx_0)}\right) + 8(\kappa+1)^{1.5} + \frac{2048\kappa^3(\kappa+1)^{1.5}}{626TM\teps'^3} \left(P(\vx_0)-P^*\right)\right)  \\
= & \tilde\fO\left(\kappa^2\sqrt{\rho}\eps^{-1.5}\right).
\end{align*}

The total number of Hessian-vector calls from Algorithm~\ref{alg:cubic-GD} is at most
\begin{align*}
& T\cdot\fK(\eps,\delta')\cdot K' \\
\leq & \tilde\fO\left(\kappa^{1.5}\sqrt{\rho}\eps^{-1.5}\right)\cdot\tilde\fO\left(\frac{L}{\sqrt{M\eps}}\right)\cdot\tilde\fO\left(\sqrt{\kappa}\right) \\
= & \tilde\fO\left(\kappa^{1.5}\sqrt{\rho}\eps^{-1.5}\right)\cdot\tilde\fO\left(\frac{\kappa\ell}{\kappa^{1.5}\sqrt{\rho\eps}}\right)\cdot\tilde\fO\left(\sqrt{\kappa}\right) 
= \tilde\fO\left(\kappa^{1.5}\ell\eps^{-2}\right).
\end{align*}
Using Lemma 8 of~\citet{tripuraneni2017stochastic}, we know
the total number of Hessian-vector calls from Algorithm~\ref{alg:cubic-final} is at most 
\begin{align*}
\tilde\fO\left(\sqrt{\kappa}\right)\cdot\fO\left(\frac{L^2}{M\eps}\right)
= \tilde\fO\left(\frac{\ell^2}{\sqrt{\kappa}\rho\eps}\right),
\end{align*}
which is not the leading term in total complexity for small $\eps$.
\end{proof}

\section{Implementation of IMCN} \label{apsec:imple}
In the implementation of IMCN (the cubic sub-problem solver), all of steps related to $\mH_t$ can be view as computing Hessian-vector product of the form 
\begin{align}\label{eq:Hu}
 \mH_t\vu' = \left(\nabla_{xx}^2 f(\vx_t,\vy_t) + \nabla_{xy}^2 f(\vx_t,\vy_t)\left(\frac{c_0}{4\ell}\mI+\frac{1}{2\ell}\sum_{k=1}^{K'}c_k\mT_k(\mZ_t)\right)\nabla_{yx}^2 f(\vx_t,\vy_t)\right) \vu',
\end{align}
where $\vu'\in\BR^{d_x}$ is given. Recall that Chebyshev polynomial satisfies
\begin{align*}
\mT_k(\mZ_t)=2\mZ_t\mT_{k-1}(\mZ_t) - \mT_{k-2}(\mZ_t),    
\end{align*} 
which allows us computing (\ref{eq:Hu}) without constructing any Hessian matrix explicitly. Concretely, For fixed $t$, we define
\begin{align*}
\vu=\nabla_{yx}^2 f(\vx_t,\vy_t) \vu', \quad \mB_k=\frac{c_0}{4\ell}\mI+\frac{1}{2\ell}\sum_{k=1}^{K}c_k\mT_k(\mZ_t), \quad \vv_k=\mT_k(\mZ_t)\vu \quad \text{and} \quad \vw_k=\mB_k\vu. 
\end{align*}
Since we have $\mZ_t=\frac{4\ell}{\ell-\mu}\left(-\frac{1}{2\ell}\nabla_{yy}^2 f(\vx_t,\vy_t)-\frac{\ell+\mu}{4\ell}\mI\right)$, it holds that
\begin{align}\label{update:wk}
\begin{split}
  \vw_k = & \left(\mB_{k-1} + c_{k}\mT_k(\mZ_t)\right)\vu \\
= & \vw_{k-1} + c_{k}\left(2\mZ_t\mT_{k-1}(\mZ_t) - \mT_{k-2}(\mZ_t)\right)\vu \\
= & \vw_{k-1} + c_{k}\left(\frac{8\ell}{\ell-\mu}\left(-\frac{1}{2\ell}\nabla_{yy}^2 f(\vx_t,\vy_t)-\frac{\ell+\mu}{4\ell}\mI\right)\vv_{k-1} - \vv_{k-2}\right) \\
= & \vw_{k-1} - \frac{4c_{k}}{\ell-\mu}\nabla_{yy}^2 f(\vx_t,\vy_t)\vv_{k-1} - \frac{2c_{k}(\ell+\mu)}{\ell-\mu}\vv_{k-1}  - c_{k}\vv_{k-2}  \\
\end{split}
\end{align}
and
\begin{align}\label{update:vk}
\begin{split}
 \vv_{k} = & \mT_{k}(\mZ_t)\vu  \\
=& 2\mZ_t\mT_{k-1}(\mZ_t)\vu - \mT_{k-2}(\mZ_t)\vu \\
=& \frac{8\ell}{\ell-\mu}\left(-\frac{1}{2\ell}\nabla_{yy}^2 f(\vx_t,\vy_t)-\frac{\ell+\mu}{4\ell}\mI\right)\vv_{k-1}  - \vv_{k-2} \\
=&  -\frac{4}{\ell-\mu}\nabla_{yy}^2 f(\vx_t,\vy_t)\vv_{k-1} - \frac{2(\ell+\mu)}{\ell-\mu}\vv_{k-1}  - \vv_{k-2}. 
\end{split}
\end{align}
Hence, based on update rules (\ref{update:wk}) and (\ref{update:vk}), we can obtain $\mH_t\vu'$ with $\fO(K')=\tilde\fO(\sqrt{\kappa})$ Hessian-vector calls. Additionally, this strategy avoids  $\fO\left(d_x^2+d_y^2\right)$ space to keep Hessian matrices.

\section{Complexity Comparison} \label{apsec:com}

Table \ref{table:compare} summarizes the theoretical results of proposed algorithms and existing methods. Note that the original analysis of GDA \cite{lin2019gradient} and PPA \cite{lin2020near} are based on the variable $\vy$ lies in a convex and compact constraint set. In fact, both of these analysis can be extended to unconstrained case easily. We give a brief sketch for the modification as follows.

\paragraph{GDA} 
Consider the proof in Section C.3 of \citet{lin2019gradient}. We only needs to keep the term $\delta_0$ and do not relax it into the diameter of the constraint set. Then it achieves $\fO\left((\kappa^2\ell+\kappa\ell^2)\eps^{-2}\right)$ gradient call upper bound for unconstrained case.

\paragraph{PPA}
Recall the proof in Section D.3 of \citet{lin2020near}. 
Let $\delta=\fO\left(\eps^2/(\kappa^2\ell)\right)$.
To achieve the corresponding result for unconstrained case in Table~\ref{table:compare}, we only needs to find $\vx_{t+1}$ such that
\begin{align*}
    \max_{\vy\in\BR^d} f(\vx_{t+1},\vy) + \ell\norm{\vx_{t+1}-\vx_t}^2 \leq \min_{\vx\in\BR^x}\left\{\max_{\vy\in\BR^y} f(\vx,\vy) + \ell\norm{\vx-\vx_t}^2\right\} + \delta
\end{align*}
in $\tilde\fO(\sqrt{\kappa}\log(1/\eps))$ gradient calls, which can be obtained by AL-SVRE with $n=1$~\cite{luo2021near}. 
More specifically, we require using AL-SVRE with $n=1$ to solve the following minimax problem
\begin{align}\label{prob:unbalanced-scsc}
    \min_{\vx\in\BR^x}\max_{\vy\in\BR^y} f_t(\vx,\vy) \triangleq f(\vx,\vy) + \ell\norm{\vx-\vx_t}^2.
\end{align}
Since we have supposed $f(\vx,\vy)$ is $\ell$-smooth and $\mu$-strongly-concave in $\vy$, the function $f_t(\vx,\vy)$ is $3\ell$-smooth, $\ell$-strongly-convex in $\vx$ and $\mu$-strongly-concave in $\vy$. Then the condition numbers of $f_t(\vx,\vy)$ are $\kappa_x=3\ell/\ell=3$ and $\kappa_y=3\ell/\mu=3\kappa$ respectively.
Note that the proof of Corollary 1 of \cite{luo2021near} does not depend on that $\vx$ or $\vy$ is constrained in a bounded set. Hence, applying
it with $n = 1$ means we need at most $\tilde\fO\left(\sqrt{\kappa}\log(1/\eps)\right)$ gradient calls to solve problem (\ref{prob:unbalanced-scsc}) with desired accuracy.

\begin{table}
\centering
\caption{We present the comparison of the algorithms for nonconvex-strongly-concave minimax optimization. Note that only the proposed MCN and IMCN guarantee to find the approximate second-order stationary point.}\label{table:compare}
\renewcommand{\arraystretch}{1.5}
{\small\begin{tabular}{c|c|c|c|c}
\hline
Algorithm & GDA~\cite{lin2019gradient} & PPA~\cite{lin2020near} & \!\!MCN~(Corollary~\ref{cor:complexity})\!\! & IMCN~(Corollary~\ref{cor:complexity2}) \\
\hline
$\norm{\nabla P(\vx)}\leq\eps$ & \Checkmark & \Checkmark & \Checkmark & \Checkmark \\
\!\!$\nabla^2 P(\vx) \succeq -\kappa^{1.5}\sqrt{\rho\eps}~\mI$\!\! & \XSolidBrush & \XSolidBrush & \Checkmark & \Checkmark  \\\hline
first-order oracle & \!\!$\fO\left((\kappa^2\ell+\kappa\ell^2)\eps^{-2}\right)$\!\! & \!\!$\tilde\fO\big(\sqrt{\kappa}\ell\eps^{-2}\big)$\!\! & $\tilde\fO\big(\kappa^2\sqrt{\rho}\eps^{-1.5}\big)$ & $\tilde\fO\big(\kappa^2\sqrt{\rho}\eps^{-1.5}\big)$ \\
second-order oracle & -- & -- & \!\!\!$\fO\big(\kappa^{1.5}\sqrt{\rho}\eps^{-1.5}\big)$\!\!\! &  --  \\
Hessian-vector oracle & -- & -- & -- &  $\tilde\fO\left(\kappa^{1.5}\ell\eps^{-2}\right)$ \\\hline
\end{tabular}}\vskip0.1cm
\end{table}


\section{Experimental Details}\label{apsec:exp}
In this section, we present the details of both synthetic minimax problem and the setting of domain adaptation experiments.

\subsection{Details of Synthetic Minimax Problem} \label{apsec:syn}

Our synthetic experiment are based on the following nonconvex-strongly-concave minimax problem 
\begin{align*}
\min_{\vx\in\BR^3} \max_{\vy\in\BR^2} f(\vx,\vy) =
w(x_3) - \frac{y_1^2}{40} + x_1 y_1 - \frac{5y_2^2}{2} + x_2y_2,
\end{align*}
where $\vx=[x_1, x_2, x_3]^\top$, $\vy=[y_1, y_2]^\top$ and
\begin{align*}
w(x) =
\begin{dcases}
\sqrt{\epsilon} \left(x + (L+1) \sqrt{\epsilon}\right)^2 - \frac{1}{3} \left(x + (L+1) \sqrt{\epsilon}\right)^3 - \frac{1}{3} (3 L + 1) \epsilon^{3/2} ,
& x \le -L \sqrt{\epsilon} ; \\
\epsilon x + \frac{\epsilon^{3/2}}{3} ,
& -L \sqrt{\epsilon} < x \le -\sqrt{\epsilon} ; \\
-\sqrt{\epsilon} x^2 - \frac{x^3}{3} ,
& -\sqrt{\epsilon} < x \le 0 ; \\
-\sqrt{\epsilon} x^2 + \frac{x^3}{3} ,
& 0 < x \le \sqrt{\epsilon} ; \\
-\epsilon x + \frac{\epsilon^{3/2}}{3} ,
& \sqrt{\epsilon} < x \le L \sqrt{\epsilon} ; \\
\sqrt{\epsilon} \left(x - (L+1) \sqrt{\epsilon}\right)^2 + \frac{1}{3} \left(x - (L+1) \sqrt{\epsilon}\right)^3 - \frac{1}{3} (3 L + 1) \epsilon^{3/2} ,
& L \sqrt{\epsilon} \le x . \\
\end{dcases}
\end{align*}

The nonconvex function $w(x)$ is designed by~\citet{tripuraneni2017stochastic} and we set $\epsilon = 0.01$ and $L = 5$ in our experiment. 
We can verify that 
\begin{align*}
\vy^*(\vx) = \begin{bmatrix}
20x_1 \\[0.1cm] \dfrac{x_2}{5}
\end{bmatrix}
\quad \text{and}  \quad
P(x) = w(x_3) +  10x_1^2 + \frac{1}{10}x_2^2.
\end{align*}

\begin{figure}[t]
\centering
\includegraphics[width=0.45\textwidth]{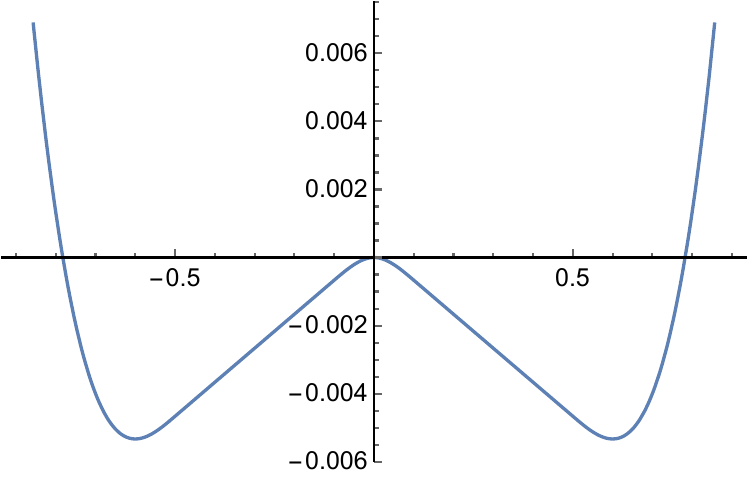}
\caption{\citet{tripuraneni2017stochastic} designed the W-shaped function and provided this figure for visualization.}
\label{fig:w}
\end{figure}

\subsection{Domain Adaptation} \label{apsec:real}
The experiments are conducted on a workstation with Intel Xeon 2.6GHz CPU, 256GB memory and one Nvidia Tesla V100 GPU. 
We implement the algorithms by using Pytorch 1.10.1 and Python 3.8.8. We choose $\alpha=1$ and $\lambda = 0.2$ for the model.

The learning rate of GDA and AGD is selected from $\left\{c\cdot 10^{-i}:c\in\{1,5\},i\in \{1,2,3,4,5\}\right\}$. In the implementation of the GDA, the learning rate of $\vx$ and $\vy$ are chosen separately. For IMCN algorithm, the parameter $M$ is selected from $\left\{c\cdot 10^{-i}:c\in\{2,5\},i\in \{0,1,2\}\right\}$. We set the threshold of using Cauchy point (i.e., Line 2 of Algorithm \ref{alg:cubic-GD}) as $0.01$. The number of AGD iterations and Cubic-Solver iterations are selected from $\{50,100,150,200\}$. The learning rate of cubic solver is selected from $\{0.2,0.02,0.002\}$.

\end{document}